\newcommand{\argmin}{\operatornamewithlimits{\textrm{argmin}}}
\newtheorem{thm}{Theorem}[section]
\newtheorem{lemma}[thm]{Lemma}
\newtheorem{remark}{Remark}[section]
\newcommand{\E}{{\mathbb E}}
\newcommand{\R}{{\mathbb R}}
\renewcommand{\P}{{\mathbb P}}
\newcommand{\C}{{\mathcal{C}}}
\newcommand{\mix}{{\mathfrak{R}}}
\newcommand{\ic}{{\mathfrak{I}}}
\newcommand{\lin}{{\mathfrak{L}}}
\newcommand{\cur}{\mathfrak{K}}
\newcommand{\Ps}{{\mathcal{P}}}
\newcommand{\F}{{\cal F}}
\newcommand{\X}{{\mathcal{X}}}
\newcommand{\ham}{{\Upsilon}}
\newcounter{rcnt}[section]
\def\qt#1{\qquad\text{#1}}
\def\argmin{\mathop{\rm argmin}}
\begin{document}

\title{Global risk bounds and adaptation in univariate convex regression}

\author{Adityanand Guntuboyina\footnote{Supported by NSF Grant DMS-1309356}  $\;\;\;$ and $\;\;\;$ Bodhisattva Sen\footnote{Supported by NSF Grants DMS-1150435 and AST-1107373} \\ University of California at Berkeley and Columbia University}
\date{}
\maketitle

%
%


\begin{abstract}
  We consider the problem of nonparametric estimation of a convex
regression function $\phi_0$. We study the risk of the least squares
estimator (LSE) under the natural squared error loss. We show that the
risk is always bounded from above by $n^{-4/5}$ modulo logarithmic
factors while being much smaller when $\phi_0$ is well-approximable
by a piecewise affine convex function with not too many affine pieces (in
which case, the risk is at most $1/n$ up to logarithmic factors). On the
other hand, when $\phi_0$ has curvature, we show that no estimator can
have risk smaller than a constant multiple of $n^{-4/5}$ in a very
strong sense by proving a ``local'' minimax lower bound. We also study
the case of model misspecification where we show that the LSE exhibits
the same global behavior provided the loss is measured from the
closest convex projection of the true regression function. In the
process of deriving our risk bounds, we prove new results for the
metric entropy of local neighborhoods of the space of univariate
convex functions. These results, which may be of independent interest,
demonstrate the non-uniform nature of the space of univariate convex
functions in sharp contrast to classical function spaces based on
smoothness constraints. 
\end{abstract}



{\it Keywords:} least squares, minimax lower bound, misspecification, projection on a closed convex cone, sieve estimator.


\section{Introduction}
We consider the problem of estimating an unknown convex function
$\phi_0$ on $[0, 1]$ from observations $(x_1, Y_1), \dots, (x_n, Y_n)$
drawn according to the model
\begin{equation}\label{eq:Model}
  Y_i = \phi_0(x_i) + \xi_i, \qt{for $i = 1, \dots, n,$}
\end{equation}
where $x_1, \dots, x_n$ are fixed points in $[0, 1]$ and $\xi_1,
\dots, \xi_n$ represent independent mean zero errors. Convex regression is an important problem in the general area of nonparametric estimation under shape constraints. It often arises in applications: typical examples appear in economics (indirect utility, production or cost functions), medicine (dose response experiments) and biology (growth curves). 

The most natural and commonly used estimator for $\phi_0$ is the full
least squares estimator (LSE), $\hat{\phi}_{ls}$, which is defined as any minimizer of the LS criterion, i.e.,   
\begin{equation*}
  \hat{\phi}_{ls} \in \argmin_{\psi \in \C} \sum_{i=1}^n \left(Y_i -
    \psi(x_i) \right)^2,
\end{equation*}
where $\C$ denotes the set of all real-valued convex functions on $[0, 1]$. $\hat{\phi}_{ls}$ is not unique even though its values at the data points $x_1, \dots, x_n$ are unique. This follows from that fact that $(\hat{\phi}_{ls}(x_1),\ldots, \hat{\phi}_{ls}(x_n)) \in \R^n$ is the projection of $(Y_1,\ldots, Y_n)$ on a closed convex cone. A simple linear interpolation of these values leads to a unique continuous and piecewise linear convex function with possible knots at the data points, which can be treated as the canonical LSE. The canonical LSE can be easily computed by solving a quadratic program with $(n-2)$ linear constraints.  

Unlike other methods for function estimation such as those based on kernels which depend on tuning parameters such as smoothing bandwidths, the LSE has the obvious advantage of being completely automated. It was first proposed by \citet{Hildreth54} for the estimation of production functions and Engel curves. Algorithms for  its computation can be found in \citet{Dykstra83} and \citet{FraserM89}.  The theoretical behavior of the LSE has been investigated by many authors. Its consistency in the supremum norm on compact sets in the interior of the support of the covariate was proved by \citet{HanPled76}. \citet{Mammen91} derived the rate of convergence of the LSE and its derivative at a fixed point, while \citet{GroeneboomEtAl01}  proved consistency and derived its asymptotic distribution at a fixed point  of positive curvature. \citet{DuembgenEtAl04} showed that the supremum distance between the LSE and $\phi_0$, assuming twice differentiability, on a compact interval in the interior of the support of the design points is of the order $(\log(n)/n)^{2/5}$.      

In spite of all the above mentioned work, surprisingly, not much is known about the \textit{global} risk behavior of the LSE under the natural loss function: 
\begin{equation}\label{eq:LossFn}
  \ell^2(\phi, \psi) := \frac{1}{n} \sum_{i=1}^n \left(\phi(x_i) -
    \psi(x_i) \right)^2. 
\end{equation} 
This is the main focus of our paper. In particular, we satisfactorily address the following questions in the paper: At what rate does the risk of the LSE $\hat{\phi}_{ls}$ decrease to zero? How does this rate of convergence depend on the underlying true function $\phi_0 \in \C$; i.e., does the LSE exhibit faster rates of convergence for certain functions $\phi_0$? How does $\hat{\phi}_{ls}$ behave, in terms of its risk, when the model is misspecified, i.e., the regression function is not convex?


We assume, throughout the paper, that, in \eqref{eq:Model}, $x_1 < x_2 < \dots < x_n$ are fixed design points in $[0, 1]$ satisfying 
\begin{equation}\label{eq:DesignPts}
c_1 \leq n(x_i - x_{i-1})
\leq c_2, \quad \mbox{ for } i =2,3,\ldots,  n,
\end{equation} 
where $c_1$ and $c_2$ are positive constants, and that $\xi_1, \ldots,
\xi_n$ are independent normally distributed random variables with mean
zero and variance $\sigma^2 > 0$. In fact, all the results in our paper, excluding those in Section~\ref{LowerBd}, hold under the milder assumption of subgaussianity of the errors. Our contributions in this paper can be summarized in the following.

\begin{enumerate}
	\item We establish, for the first time, a finite sample upper
          bound for risk of the LSE $\hat{\phi}_{ls}$ under the loss
          $\ell^2$ in Section~\ref{RiskAna}. The analysis of the risk behavior of
          $\hat{\phi}_{ls}$ is complicated due to two facts:
          (1) $\hat{\phi}_{ls}$ does not have a closed form
          expression, and (2) the class $\C$ (over which
          $\hat{\phi}_{ls}$ minimizes the LS criterion) is not totally
          bounded. Our risk upper bound involves a minimum of two  
          terms; see Theorem~\ref{twinf}. The first term 
          says that the risk $\E_{\phi_0} \ell^2(\hat{\phi}_{ls},
          \phi_0)$ is bounded by $n^{-4/5}$ up to logarithmic
          multiplicative factors in $n$. The second
          term in the risk bound says that the risk is bounded from
          above by a combination of the parametric rate $1/n$ and an
          approximation term that dictates how well $\phi_0$ is
          approximated by a piecewise affine convex
          function (up to logarithmic multiplicative
          factors). Our risk bound, in addition to  
          establishing the $n^{-4/5}$ worst case bound, implies that
          $\hat{\phi}_{ls}$ adapts to piecewise affine convex
          functions with not too many pieces (see Section~\ref{RiskAna} for the precise definition). This is remarkable
          because the LSE minimizes the LS criterion over
          all convex functions with no explicit special treatment for
          piecewise affine convex functions.

%
  
  \item In the process of proving our risk bound for the LSE, we prove
    new results for the metric entropy of balls in the space of convex
    functions. One of the
    standard approaches to finding risk bounds 
    for procedures based on empirical risk minimization (ERM) says
that the risk behavior of $\hat{\phi}_{ls}$ is determined by the
metric 
entropy of balls in the parameter space around the true function (see,
    for example,~\citet{VandegeerBook, BM93, vaartwellner96book,
      Massart03Flour}). The 
ball around $\phi_0$ in $\C$ of radius $r$ is defined as
\begin{equation}\label{lb}
S(\phi_0,
r) := \{\phi \in \C: \ell^2(\phi, \phi_0) \leq r^2 \}.  
\end{equation}
Recall that, for a subset $\F$ of a metric space $(\X,\rho)$, the
$\epsilon$-covering number of $\F$ under the metric $\rho$ is denoted
by $M(\epsilon, \F, \rho)$ and is defined as the smallest number of
closed balls of radius $\epsilon$ whose union contains $\F$. Metric
entropy is the logarithm of the covering number.  

We prove new upper bounds for the metric entropy of $S(\phi_0,
r)$ in Section~\ref{MetricEnt}. These bounds depend crucially on
$\phi_0$. When $\phi_0$ is a  piecewise affine function with not too many pieces, the metric entropy of $S(\phi_0, r)$ is much smaller than when $\phi_0$ has a second derivative that is bounded from above and below by positive constants. This difference in the sizes of the balls $S(\phi_0,
r)$ is the reason why $\hat{\phi}_{ls}$ exhibits different rates for different convex functions $\phi_0$. It should be noted that the convex functions
$S(\phi_0, r)$ are not uniformly bounded and hence existing results on
the metric entropy of classes of convex functions
(see~\citet{Bronshtein76, Dryanov, GS13}) cannot be used directly to
bound the metric entropy of $S(\phi_0, r)$. Our main risk bound Theorem~\ref{twinf} is proved in Section~\ref{prbs} using the developed metric entropy bounds for $S(\phi_0, r)$. These new bounds are also of independent interest.

	\item We investigate the optimality of the rate $n^{-4/5}$. We show
  that for convex functions $\phi_0$ having a bounded (from both above
  and below) curvature on a sub-interval of $[0, 1]$, the rate
  $n^{-4/5}$ cannot be improved (in a very strong sense) by any other
  estimator. Specifically we show that a certain ``local'' minimax
  risk (see Section~\ref{LowerBd} for the details), under the loss
  $\ell^2$, is bounded from below by $n^{-4/5}$. This shows, in
  particular, that the same holds for the global minimax rate for this
  problem. 
  

\item We also provide risk bounds in the case of model
  misspecification where we do not assume that the underlying
  regression function in~\eqref{eq:Model} is convex. In this case we
  prove the exact same upper bounds for $\E_{\phi_0}
  \ell^2(\hat{\phi}_{ls}, \phi_0)$ where $\phi_0$ now denotes any
  convex projection (defined in Section~\ref{Misspec}) of the unknown
  true regression function. To the best of our knowledge, this is the
  first result on global risk bounds for the estimation of convex
   regression functions under model misspecification.  Some auxiliary results about convex functions useful in the proofs of the main results are deferred to Section~\ref{AuxRes}. 
\end{enumerate}
Two special features of our analysis are that: (1) all our risk-bounds
are non-asymptotic, and (2) none of our results uses any (explicit)
characterization of the LSE (except that it minimizes the least
squares criterion) as a result of which our approach can, in
principle, be extended to more complex ERM procedures, including shape
restricted function estimation in higher dimensions; see e.g.,
\citet{SS11}, \citet{SW10} and \citet{CSS10}.

Our adaptation behavior of the LSE implies in particular that the LSE
converges at different rates depending on the true convex function
$\phi_0$. We believe that such adaptation is rather unique to problems
of shape restricted function estimation and is currently not very well
understood. For example, in the related problem of monotone function 
estimation, which has an enormous literature (see e.g., \citet{G56},
\citet{Birge89}, \citet{Zhang02} and the references therein), the only
result on adaptive global behavior of the LSE is found in 
\citet{GP83}; also see \citet{vdG93}. This result, however, holds only
in an asymptotic sense and only when the true function is a constant. Results on the pointwise adaptive behavior of the LSE in
monotone function estimation are more prevalent and can be
found, for example, in~\citet{CD99}, \citet{HW12} and \citet{Cator11}. For convex
function estimation, as far as we are aware, adaptation behavior of
the LSE has not been studied before. Adaptation behavior for
the estimation of a convex function at a single point has
been recently studied by~\citet{CaiLowFwork} but they focus on
different estimators that are based on local averaging techniques. 


\section{Risk Analysis of the LSE}\label{RiskAna}
Before stating our main risk bound, we need some notation. 
Recall that $\C$ denotes the set of all real-valued convex functions
on $[0, 1]$. For $\phi \in \C$, let $\lin(\phi)$ denote the
``distance'' of $\phi$ from affine functions. More precisely,
\begin{equation*}
\lin(\phi) := \inf \left\{\ell(\phi, \tau) : \tau \text{ is
      affine on } [0, 1] \right\}. 
\end{equation*}
Note that $\lin(\phi) = 0$ when $\phi$ is affine. 

We also need the notion of piecewise affine convex functions. A
convex function $\alpha$ on $[0, 1]$ is said to be piecewise 
affine if there exists an integer $k$ and points $0 = t_0 < t_1 <
\dots < t_k = 1$ such that $\alpha$ is affine on each of the $k$
intervals $[t_{i-1}, t_i]$ for $i = 1, \dots, k$. We define
$k(\alpha)$ to be the smallest such $k$. Let $\Ps_{k}$ denote the
collection of all piecewise affine convex functions with $k(\alpha)
\leq k$ and let $\Ps$ denote the collection of all piecewise affine
convex functions on $[0, 1]$.  

We are now ready to state our main upper bound for the risk of
$\hat{\phi}_{ls}$.  
\begin{thm}\label{twinf}
  Let $R := \max(1, \lin(\phi_0))$. There exists a positive constant
  $C$ depending only on the ratio $c_1/c_2$ such that 
  \begin{equation*}
    \E_{\phi_0} \ell^2(\hat{\phi}_{ls}, \phi_0)  \leq C \left(\log
      \frac{en}{2c_1} \right)^{5/4} \min \left[\left(\frac{\sigma^2
          \sqrt{R}}{n} \right)^{4/5}, \inf_{\alpha \in \Ps}
      \left(\ell^2(\phi_0, \alpha) + \frac{\sigma^2 k^{5/4}(\alpha)}{n}
      \right) \right] 
  \end{equation*}
  provided 
  \begin{equation*}
    n \geq C \frac{\sigma^2}{R^2} \left(\log
        \frac{en}{2c_1} \right)^{5/4}.  
  \end{equation*}
\end{thm}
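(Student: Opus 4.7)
The plan is to combine the classical least-squares ``basic inequality'' with the new metric entropy bounds for the local balls $S(\phi_0,t)$ and $S(\alpha,t)$ provided in Section~\ref{MetricEnt}, and close the loop through a peeling/fixed-point argument. Since $\hat\phi_{ls}$ is the least-squares projection of $Y$ onto $\C$, for \emph{any} $\alpha\in\C$ we have the basic inequality
\begin{equation*}
\ell^2(\hat\phi_{ls},\phi_0) \;\leq\; \ell^2(\phi_0,\alpha) + \frac{2}{n}\sum_{i=1}^n \xi_i\bigl(\hat\phi_{ls}(x_i)-\alpha(x_i)\bigr);
\end{equation*}
choosing $\alpha=\phi_0$ will produce the first term in the stated minimum, while letting $\alpha$ range over $\Ps$ will produce the adaptive oracle term.

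To control the noise term I would apply Dudley's entropy bound to the Gaussian process $\phi\mapsto\tfrac{1}{n}\sum_i\xi_i(\phi(x_i)-g(x_i))$ indexed by $\{\phi\in\C:\ell(\phi,g)\leq t\}$, taking $g=\phi_0$ and $g=\alpha$ in turn. This controls the expected supremum by a constant multiple of $(\sigma/\sqrt n)\int_0^t\sqrt{\log M(\epsilon,S(g,t),\ell)}\,d\epsilon$. Inserting the metric-entropy estimates from Section~\ref{MetricEnt} --- which yield an upper bound of the form $R^{1/4}\sqrt{t/\epsilon}$ (up to logs) when $g=\phi_0$, and a parametric-in-$k$ contribution plus a Bronshtein-type piece of order $k^{5/4}\sqrt{t/\epsilon}$ when $g=\alpha\in\Ps_k$ --- the Dudley integral is dominated by a sub-root function of $t$ whose fixed points are, respectively, of order $(\sigma^2\sqrt R/n)^{4/5}$ and $\sigma^2 k^{5/4}/n$, modulo log factors.

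With those sub-root bounds in hand, a standard peeling decomposition of the random radius $T:=\ell(\hat\phi_{ls},g)$ into geometric shells, combined with subgaussian concentration of the supremum on each shell, converts the basic inequality into a deterministic self-bounding inequality for $T$. Solving this fixed point and integrating the tail probabilities produces each of the two claimed bounds in expectation; the hypothesis $n\geq C(\sigma^2/R^2)(\log(en/2c_1))^{5/4}$ is what guarantees that the peeling's initial scale falls below the diameter on which Dudley's bound is informative, and the $(\log(en/2c_1))^{5/4}$ factor absorbs the discretization of the design together with the peeling losses. Taking the minimum of the two estimates and the infimum over $\alpha\in\Ps$ delivers the stated bound.

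The main obstacle is the metric-entropy input itself. The ball $S(\phi_0,t)$ is not uniformly bounded: a convex function within $\ell$-distance $t$ of $\phi_0$ can be arbitrarily large between design points or near the endpoints of $[0,1]$, so Bronshtein's classical $\epsilon^{-1/2}$ estimate for uniformly bounded convex functions does not apply directly. One must exploit convexity to argue that the elements of $S(\phi_0,t)$ are effectively uniformly bounded on an interior sub-interval and treat the boundary behavior separately, tracking the dependence on $R$ in the worst case and on $k(\alpha)$ in the adaptive case. This is the content of Section~\ref{MetricEnt}; granted those estimates, the proof of Theorem~\ref{twinf} (carried out in Section~\ref{prbs}) reduces to the chaining-plus-peeling calculation sketched above.
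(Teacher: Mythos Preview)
Your proposal is correct and follows essentially the same route as the paper: the paper also splits the bound into the $n^{-4/5}$ piece (Theorem~\ref{rig1d}) and the adaptive oracle piece (Theorem~\ref{adap}), feeds the local entropy bound of Theorem~\ref{lball} into a Dudley integral, and closes via a peeling/fixed-point argument---the only cosmetic difference is that the paper invokes the off-the-shelf result of van~de~Geer (Theorem~\ref{vandy}) to package the basic inequality, chaining, and peeling in one step, whereas you spell those ingredients out explicitly. One small slip: the worst-case entropy bound you quote as ``$R^{1/4}\sqrt{t/\epsilon}$'' should read $(t^2+R^2)^{1/4}\epsilon^{-1/2}$ (cf.\ \eqref{messi}); with that correction your fixed-point computation indeed yields $(\sigma^2\sqrt{R}/n)^{4/5}$ as claimed.
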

Because of the presence of the minimum in the risk bound presented
above, the bound actually involves two  parts. We isolate these two
parts in the following two separate results. The first result
says that the risk is bounded by $n^{-4/5}$ up to
multiplicative factors that are logarithmic in $n$. The second result
says that the risk is bounded from above by a combination of the
parametric rate $1/n$ and an approximation term that dictates how well
$\phi_0$ is approximated by a piecewise affine convex function (up to
logarithmic multiplicative factors). The 
implications of these two theorems are explained in the remarks
below. It is clear that Theorem~\ref{rig1d} and~\ref{adap} together
imply  Theorem~\ref{twinf}. We therefore prove Theorem~\ref{twinf} by
proving Theorems~\ref{rig1d} and~\ref{adap} separately in 
Section~\ref{prbs}.  
\begin{thm}\label{rig1d}
  Let $R := \max(1, \lin(\phi_0))$. There exists a positive constant
  $C$ depending only on the ratio $c_1/c_2$ such that  
  \begin{equation*}
    \E_{\phi_0} \ell^2 \left(\hat{\phi}_{ls}, \phi_0 \right) \leq C
    \left(\log \frac{en}{2c_1} \right) \left(\frac{\sigma^2
        \sqrt{R}}{n} \right)^{4/5}   
  \end{equation*}
whenever 
\begin{equation*}
  n \geq C \left(\log \frac{en}{2c_1} \right)^{5/4}
  \frac{\sigma^2}{R^2}. 
\end{equation*}
\end{thm}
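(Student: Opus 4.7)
The plan is to follow the standard empirical-process route for analyzing least squares estimators (see, e.g., \citet{VandegeerBook, Massart03Flour}): reduce the risk bound to controlling a localized Gaussian supremum around $\phi_0$, plug in a metric entropy estimate for $S(\phi_0, t)$, and solve a fixed-point inequality in $t$. Since $(\hat{\phi}_{ls}(x_1), \dots, \hat{\phi}_{ls}(x_n))$ is the Euclidean projection of $Y$ onto a closed convex cone in $\R^n$, a basic inequality/peeling argument shows that
\[
\E_{\phi_0} \ell^2(\hat{\phi}_{ls}, \phi_0) \;\lesssim\; (t_n^*)^2,
\]
where $t_n^*$ is the smallest $t>0$ at which the modulus
\[
\psi_n(t) \;:=\; \E \sup_{\phi \in S(\phi_0, t)} \frac{1}{n}\sum_{i=1}^n \xi_i\bigl(\phi(x_i) - \phi_0(x_i)\bigr)
\]
satisfies $\psi_n(t) \leq t^2/2$.

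Next I would bound $\psi_n(t)$ using Dudley's entropy integral together with the metric entropy estimate for $S(\phi_0, t)$ developed in Section~\ref{MetricEnt}. I anticipate that estimate to take the form
\[
\log M\bigl(\epsilon, S(\phi_0, t), \ell\bigr) \;\lesssim\; \bigl(\log(en/(2c_1))\bigr)^{5/4} \sqrt{R/\epsilon}
\]
with $R = \max(1, \lin(\phi_0))$, valid for $\epsilon \leq t$ in the range of interest. Gaussianity of the $\xi_i$ combined with Dudley's integral then yields
\[
\psi_n(t) \;\lesssim\; \frac{\sigma}{\sqrt{n}} \int_0^t \sqrt{\log M(\epsilon, S(\phi_0, t), \ell)}\, d\epsilon \;\lesssim\; \frac{\sigma}{\sqrt{n}}\, \bigl(\log(en/(2c_1))\bigr)^{5/8}\, R^{1/4}\, t^{3/4}.
\]
Setting the right-hand side equal to $t^2/2$ and solving for $t$ gives $(t_n^*)^2 \lesssim \log(en/(2c_1))\,(\sigma^2 \sqrt{R}/n)^{4/5}$, which is exactly the claimed bound. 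The hypothesis on $n$ is the natural condition that makes $t_n^* \leq R$, so that the invoked entropy bound is operative and the localization step is self-consistent.

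The principal obstacle is not the empirical-process step or the fixed-point calculation, which are routine once the entropy estimate is in hand, but rather the metric entropy bound for $S(\phi_0, t)$ itself. Bronshtein's classical entropy bound for convex functions requires \emph{uniform} boundedness on $[0,1]$, whereas elements of $\C$ (and hence of $S(\phi_0, t)$) can blow up near the endpoints. One must exploit the constraint $\ell(\phi, \phi_0) \leq t$ to show that $\phi \in S(\phi_0, t)$ cannot be too large in sup-norm except on a small region near the endpoints, small enough that its contribution to the entropy is negligible, and then apply a Bronshtein-type bound on the bulk of $[0,1]$. The appearance of $R = \max(1, \lin(\phi_0))$ arises naturally in this analysis: $\lin(\phi_0)$ calibrates how much $\phi \in S(\phi_0, t)$ may deviate from the best affine approximation of $\phi_0$, and hence how rich the localized ball is.
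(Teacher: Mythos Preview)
Your proposal is correct and matches the paper's own argument closely: the paper invokes van de Geer's Theorem~9.1 (which packages the basic inequality, peeling, and Dudley chaining), feeds in the entropy bound of Theorem~\ref{lball} specialized to affine $\alpha$ (so that $\Gamma(r;\phi_0)\le\sqrt{r^2+R^2}$), and solves the resulting fixed-point inequality exactly as you describe. The only refinement is that the paper retains the $(r^2+R^2)^{1/8}$ factor in $H(r)$ rather than simplifying to $R^{1/4}$, so that $H(r)$ is a valid upper bound on the entropy integral for \emph{all} $r>0$ with $H(r)/r^2$ decreasing --- this is what makes the peeling work on every shell, and the hypothesis on $n$ then guarantees $\delta_0\le R^2$, at which point $(r^2+R^2)^{1/8}\asymp R^{1/4}$ and the fixed-point calculation reduces to yours.
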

\begin{thm}\label{adap}
There exists a constant $C$, depending only on the ratio $c_1/c_2$, such
that 
\begin{equation}\label{adap.eq}
  \E_{\phi_0} \ell^2(\phi_0, \hat{\phi}_{ls}) \leq C \left(\log
    \frac{en}{2c_1} \right)^{5/4} \inf_{\alpha \in \Ps}
  \left(\ell^2(\phi_0, \alpha) + \frac{\sigma^2 k^{5/4}(\alpha)}n{}
  \right)  
\end{equation}
for all $n$. 
\end{thm}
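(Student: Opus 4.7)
The plan is to prove, for every fixed $\alpha \in \Ps$ with $k := k(\alpha)$, the per-$\alpha$ bound
\[
\E_{\phi_0}\ell^2(\hat{\phi}_{ls},\phi_0) \leq C\left(\log\frac{en}{2c_1}\right)^{5/4}\left(\ell^2(\phi_0,\alpha) + \frac{\sigma^2 k^{5/4}}{n}\right),
\]
after which \eqref{adap.eq} follows by taking the infimum over $\alpha$. The natural starting point is the basic inequality: since $\hat{\phi}_{ls}$ minimizes $\sum_{i=1}^n (Y_i - \psi(x_i))^2$ over $\psi \in \C$ and $\alpha \in \C$, expanding both sides and substituting $Y_i = \phi_0(x_i) + \xi_i$ yields
\[
\ell^2(\hat{\phi}_{ls},\phi_0) \leq \ell^2(\alpha,\phi_0) + \frac{2}{n}\sum_{i=1}^n \xi_i\bigl(\hat{\phi}_{ls}(x_i) - \alpha(x_i)\bigr).
\]
All the work lies in bounding the Gaussian linear functional on the right, which cannot be dispatched by Cauchy--Schwarz alone because $\hat{\phi}_{ls}$ is random and ranges over the noncompact class $\C$.

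I would control this noise term by the standard empirical-process recipe (see \citet{VandegeerBook,Massart03Flour}): a peeling argument over shells $\{\phi \in \C : \ell(\phi,\alpha) \in (2^{j-1}t_\star, 2^j t_\star]\}$ reduces matters to bounding, for each $t \geq t_\star$, the Gaussian supremum
\[
Z(t) := \sup_{\phi \in \C,\ \ell(\phi,\alpha) \leq t} \frac{1}{n}\sum_{i=1}^n \xi_i\bigl(\phi(x_i) - \alpha(x_i)\bigr).
\]
Dudley's entropy integral gives $\E Z(t) \lesssim \tfrac{\sigma}{\sqrt n}\int_0^t \sqrt{\log M(\epsilon, S(\alpha,t), \ell)}\,d\epsilon$, and Gaussian concentration controls the deviations of $Z(t)$ above its mean. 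The critical radius $t_\star$ is chosen as the smallest $t$ making the entropy integral $\lesssim t^2$; then on the resulting high-probability event the noise term in the basic inequality is absorbed as $\tfrac{1}{2}\ell^2(\hat{\phi}_{ls},\alpha) + C t_\star^2$ via $2ab \leq a^2/2 + 2b^2$. Combining this with $\ell^2(\hat{\phi}_{ls},\alpha) \leq 2\ell^2(\hat{\phi}_{ls},\phi_0) + 2\ell^2(\phi_0,\alpha)$ and moving $\ell^2(\hat{\phi}_{ls},\phi_0)$ to the left recovers, after taking expectations, the per-$\alpha$ bound provided $t_\star^2 \lesssim (\log(en/2c_1))^{5/4}\sigma^2 k^{5/4}/n$.

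The hard part, and the source of the $k^{5/4}$ factor, is the metric entropy estimate for the local ball $S(\alpha,t)$ when $\alpha$ is piecewise affine with $k$ pieces, which is exactly what Section~\ref{MetricEnt} is designed to provide. The key structural observation is that on each of $\alpha$'s $k$ affine pieces the difference $\phi - \alpha$ is itself convex, so $S(\alpha,t)$ factorizes across pieces into balls in one-dimensional convex-function spaces; this yields an entropy estimate substantially smaller than the Bronshtein-type bound driving Theorem~\ref{rig1d}. Plugging the refined entropy into Dudley's integral and solving the fixed-point equation $t_\star^2 \asymp \E Z(t_\star)$ delivers $t_\star^2 \asymp (\log(en/2c_1))^{5/4}\sigma^2 k^{5/4}/n$. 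Apart from this entropy input, the argument parallels the proof of Theorem~\ref{rig1d}, with $\alpha$ playing the role that $\phi_0$ played there.
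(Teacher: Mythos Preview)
Your approach is sound and rests on the same key entropy input (the bound~\eqref{pff} for $S(\alpha,r)$ with $\alpha$ piecewise affine), but it is organized differently from the paper's proof. The paper does not prove a per-$\alpha$ oracle inequality and then take the infimum; it keeps the local balls centered at $\phi_0$ throughout and applies Theorem~\ref{vandy} directly, so that the approximation error $\ell^2(\phi_0,\alpha)$ enters through the entropy bound of Theorem~\ref{lball} (via $\Gamma(r;\phi_0)$) rather than through the basic inequality. Concretely, the paper introduces the balancing index $u$ (the smallest $k$ with $\ell_k^2 \leq \sigma^2 k^{5/4}/n$), shows the target infimum on the right of~\eqref{adap.eq} is at least $\sigma^2 u^{5/4}/(2^{5/4}n)$, and then verifies the fixed-point condition of Theorem~\ref{vandy} at $\delta_0 \asymp (\log(en/2c_1))^{5/4}\sigma^2 u^{5/4}/n$ using $H(r)=K(\log(en/2c_1))^{5/8}u^{5/8}r^{3/4}(r^2+\ell_u^2)^{1/8}$. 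Your route---centering at $\alpha$, peeling on $S(\alpha,t)$, and letting the bias term appear in the basic inequality---is the more transparent oracle-inequality formulation and dispenses with the balancing-index device; the paper's route is shorter because it reuses Theorem~\ref{vandy} as a black box rather than redoing the peeling and concentration by hand.

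One arithmetic slip to fix: absorbing the noise as $\tfrac{1}{2}\ell^2(\hat\phi_{ls},\alpha)+Ct_\star^2$ and then invoking $\ell^2(\hat\phi_{ls},\alpha)\leq 2\ell^2(\hat\phi_{ls},\phi_0)+2\ell^2(\phi_0,\alpha)$ puts the coefficient of $\ell^2(\hat\phi_{ls},\phi_0)$ at $1$ on both sides, leaving nothing to move to the left. Use $2ab\leq \tfrac{1}{4}a^2+4b^2$ (any coefficient strictly below $\tfrac{1}{2}$ works) and the remainder of your argument goes through unchanged.
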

The following remarks will better clarify the meaning of these
results. The first remark below is about Theorem~\ref{rig1d}. The later three remarks are about Theorem~\ref{adap}. 
%


\begin{remark}[Why convexity is similar to second order smoothness]\label{csi}
From the classical theory of nonparametric
statistics, it follows that this is the same rate that one obtains for
the estimation of twice differentiable functions (satisfying a
condition such as $\sup_{x \in [0, 1]} |\phi_0''(x)| \leq B$) on the
unit interval. In Theorem~\ref{rig1d}, we prove that
$\hat{\phi}_{ls}$ achieves the same rate (up to log factors) when the
true function is convex under no assumptions whatsoever on the
smoothness of the function. Therefore, the constraint of convexity is similar to the constraint of second order smoothness. This has long since been believed to be true, but to the best of our knowledge, Theorem~\ref{rig1d} is the first result to rigorously prove this  via a nonasymptotic risk bound for the estimator $\hat{\phi}_{ls}$ with no assumption of smoothness. 
\end{remark}

\begin{remark}[Parametric rates for piecewise affine convex functions]
  Theorem~\ref{adap} implies that $\hat{\phi}_{ls}$ has the parametric
  rate for estimating piecewise affine convex functions. Indeed,
  suppose $\phi_0$ is a piecewise affine convex function on $[0, 
1]$ i.e., $\phi_0 \in \Ps$. Then using $\alpha = \phi_0$
in~\eqref{adap.eq}, we have the risk bound 
\begin{equation*}
  \E_{\phi_0} \ell^2(\phi_0, \hat{\phi}_{ls}) \leq C
   \left(\log \frac{en}{2c_1} \right)^{5/4}
  \frac{\sigma^2 k^{5/4}(\phi_0)}{n}. 
\end{equation*}
This is the parametric rate $1/n$ up to logarithmic factors and is of
course much smaller than the nonparametric rate $n^{-4/5}$ given in
Theorem~\ref{rig1d}. Therefore, $\hat{\phi}_{ls}$ adapts to each class
$\Ps_k$ of piecewise convex affine functions.  
\end{remark}

\begin{remark}[Automatic adaptation]
Risk bounds such as~\eqref{adap.eq} are usually provable for
estimators based on empirical model selection criteria (see, for
example,~\citet{BarronBirgeMassart}) or aggregation (see, for
example,~\citet{RT12}). Specializing to 
the present situation, in order to adapt over $\Ps_k$ as $k$ varies,
one constructs LSE over each $\Ps_k$ and then either selects one
estimator from this collection by an  empirical model selection
criterion or aggregates these estimators with data-dependent weights. 
While the theory for such penalization
estimators is well-developed (see e.g.,~\citet{BarronBirgeMassart}),
these estimators are  computationally expensive, might rely on
certain tuning parameters which might be difficult to choose in
practice and also require estimation of $\sigma^2$. The LSE
$\hat{\phi}_{ls}$ is very different from these 
estimators because it simply minimizes the LS criterion
over the whole space $\C$. It is therefore very easy to compute, does
not depend on any tuning parameter or estimates for $\sigma^2$ and,
remarkably, it automatically adapts over the classes $\Ps_k$ as $k$
varies.      
\end{remark}

\begin{remark}[Why convexity is different from second order
  smoothness] In Remark~\ref{csi}, we argued how estimation under
  convexity is similar to estimation under second order
  smoothness. Here we describe how the two are different. The risk
  bound given by Theorem~\ref{adap} crucially depends on the true
  function $\phi_0$. In other words, the LSE converges at different
  rates  depending on the true convex function $\phi_0$. Therefore,
  the rate  of the LSE is not uniform over the class of all convex
  functions but  it varies quite a bit from function to function in
  that class. As will be clear from our proofs, the reason for this
  difference in rates is that the class of convex functions $\C$ is
  locally non-uniform in the sense that the local neighborhoods around
  certain convex functions (e.g., affine functions) are much sparser
  than local neighborhoods around other convex functions. On the other
  hand, in the class of twice differentiable functions, all local
  neighborhoods are, in some sense, equally sized.   
\end{remark}

\begin{remark}[On the logarithmic factors]
  We believe that Theorems~\ref{rig1d} and~\ref{adap} might have
  redundant logarithmic factors. In particular, we conjecture that there
  should be no logarithmic term in Theorem~\ref{rig1d} and that the
  logarithmic term should be $\log (en/(2c_1))$ instead of $(\log 
  (en/(2c_1)))^{5/4}$ in Theorem~\ref{adap}; cf. analogous results in
  isotonic regression -- \citet{Zhang02} and~\citet{CGS13}. These
  additional logarithmic factors mainly arise due to the fact that the
  class $S(\phi_0, r)$, of convex functions appearing in the proofs,
  is not uniformly bounded. Sharpening 
  these factors might be possible by using an
  explicit characterization of the LSE (as was done
  in~\citet{Zhang02} and~\citet{CGS13} for isotonic regression) and
  other techniques that are beyond the scope of the present paper.
\end{remark}

The proofs of Theorems~\ref{rig1d} and~\ref{adap} are presented in
Section~\ref{prbs}. A high level overview of the proof goes as
follows. The convex LSE is an ERM procedure. These procedures are very
well studied and numerous risk bounds exist in mathematical statistics
and machine learning (see, for example,~\citet{VandegeerBook, BM93,
  vaartwellner96book, Massart03Flour}). These results essentially say
that the risk behavior of $\hat{\phi}_{ls}$ is determined by the metric
entropy of the balls $S(\phi_0, r)$ (defined in~\eqref{lb}) in $\C$
around the true function $\phi_0$. Controlling the metric entropy of
the $S(\phi_0, r)$ is the key step in the proofs of
Theorem~\ref{rig1d} and~\ref{adap}. The next section deals with bounds
for the metric entropy of $S(\phi_0, r)$. 


\section{The Local Structure of the Space of Convex
  Functions}\label{MetricEnt} 
In this section, we prove bounds for the metric entropy of the balls
$S(\phi_0, r)$ as $\phi_0$ ranges over the space of convex
functions. Our results give new insights into the local structure of
the space of convex functions. We show that the metric entropy of
$S(\phi_0, r)$ behaves differently for different convex functions
$\phi_0$. This is the reason why the LSE exhibits different rates of
convergence depending on the true function $\phi_0$. The metric
entropy of $S(\phi_0, r)$ is much smaller when $\phi_0$ is a piecewise
affine  convex function with not too many affine pieces than when
$\phi_0$ has a second derivative that is bounded from above and below
by positive constants.  

The next theorem is the main result of this section. 
\begin{thm}\label{lball}
  There exists a positive constant $c$ depending
  only on the ratio $c_1/c_2$ such that for every $\phi_0 \in \C$ and
  $\epsilon > 0$, we have 
  \begin{equation}\label{lball.eq}
    \log M(\epsilon, S(\phi_0, r), \ell) \leq c  
  \left(\log \frac{en}{2c_1} \right)^{5/4} \sqrt{\frac{\Gamma(r;
      \phi_0)}{\epsilon}} 
\end{equation}
where 
\begin{equation*}
  \Gamma(r; \phi_0) :=  \inf_{\alpha \in \Ps} \left( k^{5/2}(\alpha)
    \left(r^2 + \ell^2(\phi_0, \alpha) \right)^{1/2} \right). 
\end{equation*}
\end{thm}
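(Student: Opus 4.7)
\textbf{Plan for the proof of Theorem~\ref{lball}.}

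The strategy is a two-stage reduction: first \emph{localize} around a piecewise affine convex function, then \emph{decompose} the covering problem across its affine pieces. Fix an arbitrary $\alpha \in \Ps$ with knots $0 = t_0 < t_1 < \cdots < t_k = 1$ where $k = k(\alpha)$. For any $\phi \in S(\phi_0,r)$, the triangle inequality and $(a+b)^2 \le 2(a^2+b^2)$ give
\begin{equation*}
S(\phi_0,r) \subseteq \widetilde{S}(\alpha,\rho) := \{\phi \in \C : \ell^2(\phi,\alpha) \le 2\rho^2\},\qquad \rho^2 := r^2 + \ell^2(\phi_0,\alpha).
\end{equation*}
Thus it suffices to bound $\log M(\epsilon, \widetilde{S}(\alpha,\rho), \ell)$; optimizing the final bound over $\alpha \in \Ps$ produces the infimum defining $\Gamma(r;\phi_0)$.

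Next, on each piece $[t_{i-1},t_i]$ the function $\alpha$ is affine, so $\phi - \alpha$ is convex on that piece even though it is not convex globally. Let $\ell_i$ denote the empirical $L^2$-norm restricted to the $n_i$ design points in $[t_{i-1},t_i]$ and set $w_i = n_i/n$. Under the design regularity \eqref{eq:DesignPts}, $n_i$ is comparable to $n(t_i-t_{i-1})$, and a rescaling to $[0,1]$ carries a convex function on $[t_{i-1},t_i]$ to a convex function on $[0,1]$ with equal empirical $L^2$-norm. I therefore plan to invoke a Dryanov/GS13-type ``localized'' entropy inequality
\begin{equation*}
\log M\bigl(\epsilon_i,\,\{g\in\C([t_{i-1},t_i]) : \ell_i(g) \le \rho_i\},\,\ell_i\bigr) \;\le\; c \bigl(\log \tfrac{en}{2c_1}\bigr)^{a} \sqrt{\rho_i/\epsilon_i},
\end{equation*}
whose proof (sketched as an auxiliary lemma in Section~\ref{AuxRes}) combines Bronshtein's sup-norm bound with a truncation/bracketing argument to pass from uniform to $L^2$ control. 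Multiplying the piecewise covers yields a product cover of $\widetilde{S}(\alpha,\rho)$, and via $\ell^2(\phi,\alpha) = \sum_i w_i \ell_i^2(\phi,\alpha)$, an allocation $\sum_i w_i \epsilon_i^2 \le \epsilon^2$ produces an $\epsilon$-cover in the global $\ell$-metric.

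The combinatorial heart of the argument is then an optimization: bound $\sum_{i=1}^k \sqrt{\rho_i/\epsilon_i}$ subject to $\sum w_i \epsilon_i^2 \le \epsilon^2$ and (essentially) $\sum w_i \rho_i^2 \le 2\rho^2$. Choosing $\epsilon_i$ to equalize the summands via Lagrange multipliers, together with a Cauchy--Schwarz step exploiting the design-regularity lower bound $w_i \gtrsim (t_i-t_{i-1})$, yields
\begin{equation*}
\sum_{i=1}^k \sqrt{\rho_i/\epsilon_i} \;\lesssim\; k^{5/4}\sqrt{\rho/\epsilon}\; =\; \sqrt{k^{5/2}\rho/\epsilon},
\end{equation*}
which upon using $k^{5/2}\rho = k^{5/2}(r^2+\ell^2(\phi_0,\alpha))^{1/2}$ and taking the infimum over $\alpha$ recovers $\sqrt{\Gamma(r;\phi_0)/\epsilon}$.

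\textbf{Main obstacle.} Two sources of non-uniformity must be handled simultaneously: the affine pieces of $\alpha$ may have very unequal sizes $w_i$, and the actual ``mass'' $\rho_i$ of a given $\phi - \alpha$ on piece $i$ is not known in advance and varies across $\phi$. I expect to resolve the second issue by a peeling argument, discretizing each $\rho_i$ on a dyadic scale between $0$ and $\rho/\sqrt{w_i}$; this costs only a further logarithmic factor and, when combined with the logarithmic overhead $a$ inside the piecewise $L^2$-entropy bound, accounts for the final $(\log(en/(2c_1)))^{5/4}$ prefactor. The unboundedness in sup-norm of elements of $S(\phi_0,r)$ near the endpoints of each subinterval is the technical point making the piecewise entropy bound delicate and is where most of the auxiliary work should go.
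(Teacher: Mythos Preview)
Your overall architecture matches the paper's proof: reduce $S(\phi_0,r)$ to a ball $S(\alpha,\rho)$ around a piecewise affine $\alpha$ via the triangle inequality, then decompose across the affine pieces, apply on each piece an entropy bound for convex functions under only an $L^2$ (not sup-norm) constraint, and finally infimize over $\alpha$. The identification of the per-piece $L^2$-entropy bound as the technical core, handled by a dyadic partition toward the interval endpoints (your ``truncation/bracketing'' step), also agrees with the paper's Theorem~\ref{intcov}.

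Where you diverge from the paper is in the combinatorial step, and there you make it harder than it is. In the paper's normalization $\ell_i^2(f,g):=\frac{1}{n}\sum_{x_j\in I_i}(f(x_j)-g(x_j))^2$, one has $\ell^2=\sum_i\ell_i^2$ with no weights. Consequently every $\phi\in S(\alpha,r)$ satisfies $\ell_i(\phi,\alpha)\le r$ for \emph{every} piece $i$ simultaneously; there is no varying ``mass'' $\rho_i$ to track across $\phi$, and hence no peeling is needed. With the equal allocation $\epsilon_i=\epsilon/\sqrt{k}$ and this uniform bound $\rho_i\le r$, each of the $k$ pieces contributes at most $c(\log(en/(2c_1)))^{5/4}(r\sqrt{k}/\epsilon)^{1/2}$, and summing gives the factor $k\cdot k^{1/4}=k^{5/4}$ directly. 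The Lagrangian optimization and the dyadic peeling over $\rho_i$ are unnecessary; moreover, a naive per-piece peeling would threaten a $(\log n)^k$ overhead, which you should avoid. The $(\log(en/(2c_1)))^{5/4}$ prefactor in the paper comes entirely from the per-piece bound (the dyadic partition toward the endpoints in Theorem~\ref{intcov}), not from any peeling in the assembly step.
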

Note that the dependence of the right hand side on~\eqref{lball.eq} on 
$\epsilon$ is always $\epsilon^{-1/2}$. The dependence on $r$ is
given by $\Gamma(r; \phi_0)$ and it depends on $\phi_0$. This function
$\Gamma(r; \phi_0)$ controls the size of the ball $S(\phi_0, r)$. The
larger the value $\Gamma(r; \phi_0)$, the larger the metric entropy of
$S(\phi_0, r)$. The smallest possible value of $\Gamma(r; \phi_0)$
equals $r$ and is achieved for affine functions. When $\phi_0$ is
piecewise affine, $\Gamma(r; \phi_0)$ is larger than $r$ but it is not
much larger provided $k(\phi_0)$ is small. This is because $\Gamma(r;
\phi_0) \leq r k^{5/2}(\phi_0)$. When
$\phi_0$ cannot be well-approximable by piecewise affine functions
with small number of pieces, it can be shown that $\Gamma(r; \phi_0)$
is bounded from below by a constant independent of $r$. This will be
the case, for example, when $\phi_0$ is twice differentiable with
$\phi_0''(x)$ bounded from above and below by positive constants. As
shown in the next theorem, $S(\phi_0, r)$ has the largest possible
size for such $\phi_0$. Note also that one always has the upper bound
$\Gamma(r; \phi_0) \leq \sqrt{r^2 + \lin^2(\phi_0)}$ which can be
proved by restricting the infimum in the definition of $\Gamma(r;
\phi_0)$ to affine functions. 

We need the following definition for the next theorem. For a
subinterval $[a, b]$ of $[0, 1]$ and positive real numbers $\kappa_1 <
\kappa_2$, we define $\cur := \cur(a, b, \kappa_1, \kappa_2)$ to be
the class of all convex functions $\phi$ on $[0, 1]$ which are twice
differentiable on $[a, b]$ and which satisfy $\kappa_1 \leq \phi''(x)
\leq \kappa_2$ for all $x \in [a, b]$. 
\begin{thm}\label{lcur}
Suppose $\phi_0 \in \cur(a, b, \kappa_1, \kappa_2)$. Then
there exist positive constants 
$c$, $\epsilon_0$ and $\epsilon_1$ depending only on $\kappa_1,
\kappa_2$, $b-a$ and $c_2$ such that   
\begin{equation}\label{lcur.eq}
  \log M(\epsilon, S(\phi_0, r), \ell) \geq c \epsilon^{-1/2} \qt{for
    $\epsilon_1 n^{-2} \leq \epsilon \leq r \epsilon_0$}. 
\end{equation}
\end{thm}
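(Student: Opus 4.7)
The plan is to construct a large packing of $S(\phi_0, r)$ by adding small concave parabolic ``bumps'' to $\phi_0$ on disjoint short subintervals of $[a, b]$, indexed by subsets of $\{1, \ldots, m\}$. The key observation is that since $\phi_0'' \ge \kappa_1$ on $[a, b]$, bumps whose second derivative has magnitude at most $\kappa_1$ can be absorbed by the curvature of $\phi_0$, so the perturbed function remains convex on all of $[0, 1]$. A Varshamov--Gilbert argument then yields an exponentially large family of well-separated perturbations, producing the $\epsilon^{-1/2}$ lower bound.

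Concretely, we partition $[a, b]$ into $m$ equal pieces $I_j = [z_{j-1}, z_j]$ of length $h = (b-a)/m$, with $m$ to be chosen, and set $w_j(x) := (\kappa_1/2)(x - z_{j-1})(z_j - x) \mathbf{1}_{I_j}(x)$, so that on the interior of $I_j$ we have $w_j'' = -\kappa_1$, $\|w_j\|_\infty = \kappa_1 h^2/8$, $w_j'(z_{j-1}+) = +\kappa_1 h/2$, and $w_j'(z_j-) = -\kappa_1 h/2$. For each $S \subseteq \{1, \ldots, m\}$ define $\phi_S := \phi_0 + \sum_{j \in S} w_j$. Convexity of $\phi_S$ on $[0, 1]$ reduces to three checks: on each $I_j$ with $j \in S$, $\phi_S'' = \phi_0'' - \kappa_1 \ge 0$; at any interior join $z_j$ the one-sided derivative of the perturbation jumps upward by $\kappa_1 h$, $\kappa_1 h/2$, or $0$ depending on how many of $j, j+1$ are in $S$, so $\phi_S'(z_j+) - \phi_S'(z_j-) \ge 0$; and at the external endpoints $a, b$ the convexity of $\phi_0$ itself supplies the slack $\phi_0'(a+) - \phi_0'(a-) \ge 0$ (and similarly at $b$) needed to absorb the perturbation's jump.

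Because the $w_j$ have pairwise disjoint supports,
$$
\ell^2(\phi_S, \phi_{S'}) \;=\; \frac{1}{n} \sum_{j \in S \triangle S'} \sum_{i:\, x_i \in I_j} w_j(x_i)^2.
$$
On the central half $[z_{j-1} + h/4, z_j - h/4]$ of $I_j$ we have $w_j \ge 3 \kappa_1 h^2 / 32$, and the upper-spacing assumption $x_i - x_{i-1} \le c_2/n$ places at least $nh/(4 c_2)$ design points in that central half as soon as $nh \ge 4 c_2$. Consequently $n^{-1} \sum_{x_i \in I_j} w_j(x_i)^2 \gtrsim h^5$ with an implicit constant depending only on $\kappa_1, c_2$, giving $\ell^2(\phi_S, \phi_{S'}) \gtrsim |S \triangle S'|\, (b-a)^5 m^{-5}$. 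Conversely, since at every $x_i$ at most one $w_j(x_i)$ is nonzero, $\ell(\phi_S, \phi_0) \le \max_j \|w_j\|_\infty = \kappa_1 h^2 / 8$; this cruder bound does not invoke $c_1$.

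Applying the Varshamov--Gilbert lemma we obtain subsets $S_1, \ldots, S_N$ with $N \ge 2^{m/8}$ and $|S_a \triangle S_b| \ge m/4$ for $a \ne b$, whose pairwise $\ell$-separations are $\gtrsim \sqrt{(b-a)^5 m^{-4}}$. Choosing $m$ of order $(b-a)^{5/4} \epsilon^{-1/2}$ makes these separations exceed $2\epsilon$; the ball-membership condition $\kappa_1 h^2 / 8 \le r$ translates to the upper bound $\epsilon \le r \epsilon_0$ and the Riemann-sum condition $nh \ge 4 c_2$ to the lower bound $\epsilon \ge \epsilon_1 / n^2$, with $\epsilon_0, \epsilon_1$ depending only on $\kappa_1, b-a, c_2$. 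In this range, $\log M(\epsilon, S(\phi_0, r), \ell) \ge (\log 2) m / 8 \gtrsim \epsilon^{-1/2}$, which is exactly~\eqref{lcur.eq}. The delicate step is the boundary-convexity verification of the second paragraph, which forces the bump amplitudes to saturate the curvature of $\phi_0$; beyond that, the argument is a standard hypercube packing with a Riemann-sum comparison that uses only the upper-spacing constant $c_2$, and $\kappa_2$ plays no role in the construction (it appears in the hypothesis only to define the class $\cur$).
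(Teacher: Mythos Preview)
Your argument is correct and takes a genuinely different route from the paper. The paper builds its packing by replacing $\phi_0$ on selected subintervals $[t_{i-1},t_i]$ by the \emph{secant chord} $\alpha_i$ and then taking the pointwise maximum $\phi_\tau=\max(\phi_0,\max_{i:\tau_i=1}\alpha_i)$; convexity is then automatic, the sup-distance bound $\sup_x|\phi_\tau-\phi_0|\le\kappa_2(b-a)^2/(8m^2)$ comes from the linear-interpolation error estimate and uses $\kappa_2$, and the separation lower bound comes from the auxiliary Lemma~\ref{buj} and uses $\kappa_1$. Your construction instead \emph{adds} concave parabolic bumps $w_j$ with $w_j''=-\kappa_1$ and checks convexity directly via $\phi_0''\ge\kappa_1$ on $[a,b]$ together with the sign of the one-sided derivative jumps at the knots. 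One small inaccuracy in your writeup: at the external endpoints $a,b$ the perturbation's derivative jump is already nonnegative (it equals $+\kappa_1 h/2$ on whichever side carries a bump), so no ``slack'' from $\phi_0$ is needed there; the place where the curvature lower bound really does the work is the interior of each $I_j$. What your approach buys is that the resulting constants $c,\epsilon_0,\epsilon_1$ depend only on $\kappa_1,b-a,c_2$ and not on $\kappa_2$, so you in fact prove a slightly sharper statement than~\eqref{lcur.eq}; you also avoid invoking Lemma~\ref{buj}. What the paper's approach buys is a one-line convexity argument (pointwise max of convex functions) at the cost of bringing $\kappa_2$ into the constants.
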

Note that the right hand side of~\eqref{lcur.eq} does not depend on
$r$. This should be contrasted with the right hand side
of~\eqref{lball.eq} when $\phi_0$ is, say, an affine function. The
non-uniform nature of the space of univariate convex functions should
be clear from this: balls $S(\phi_0, r)$ of the same radius $r$ in the
space have different sizes depending on their center, $\phi_0$. This 
should be contrasted with the space of twice differentiable functions
in which all balls are equally sized in the sense that they all
satisfy~\eqref{lcur.eq}.   
\begin{remark}
  Note that the inequality~\eqref{lcur.eq} only holds when $\epsilon
  \geq \epsilon_1 n^{-2}$. In other words, it does not hold when
  $\epsilon \downarrow 0$. This is actually inevitable because,
  ignoring the convexity of functions in $S(\phi_0, r)$, the metric
  entropy of $S(\phi_0, r)$ under $\ell$ cannot be larger than the
  metric entropy of the ball of radius $r$ in $\R^n$, which is bounded from
  above by $n \log (1 + (3r/\epsilon))$ (see e.g.,~\citet[Lemma
  4.1]{Pollard90Iowa}). Thus, as $\epsilon \downarrow 0$, the metric
  entropy of $S(\phi_0, r)$ becomes 
  logarithmic in $\epsilon$ as opposed to $\epsilon^{-1/2}$. Also note
  that inequality~\eqref{lcur.eq} only holds for $\epsilon \leq r
  \epsilon_0$. This also makes sense because the diameter of
  $S(\phi_0, r)$ in the metric $\ell$ equals $2r$ and, consequently,
  the left hand side of~\eqref{lcur.eq} equals zero for $\epsilon >
  2r$. Therefore, one cannot expect~\eqref{lcur.eq} to hold for all
  $\epsilon > 0$. 
\end{remark}
\begin{remark}
  The proof of Theorem~\ref{lcur} actually implies a conclusion
  stronger than~\eqref{lcur.eq}. Let $S'(\phi_0, r) := \left\{\phi \in
  \C : \sup_x |\phi(x) - \phi_0(x)| \leq r\right\}$. Clearly this is a
 smaller neighborhood of $\phi_0$ than $S(\phi_0, r)$ i.e., $S'(\phi_0,
r) \subseteq S(\phi_0, r)$. The proof of Theorem~\ref{lcur} shows that
the lower bound~\eqref{lcur.eq} also holds for $\log M(\epsilon,
S'(\phi_0, r), \ell)$. 
\end{remark}

In the reminder of this section, we provide the proofs of
Theorems~\ref{lball} and~\ref{lcur}. Let us start with the proof of
Theorem~\ref{lball}. Since functions in $S(\phi_0, r)$ are convex,
we need to analyze the covering numbers of subsets of convex
functions. There exist only two previous results
here.~\citet{Bronshtein76} proved covering numbers for classes of
convex functions that are uniformly bounded and uniformly Lipschitz
under the supremum metric. This result was extended by~\citet{Dryanov}
who dropped the uniform Lipschitz assumption (this result was further
extended by~\citet{GS13} to the multivariate case). Unfortunately, the
convex functions in $S(\phi_0, r)$ are not uniformly
bounded (they only satisfy a weaker integral-type constraint)  and
hence Dryanov's result cannot be used directly for proving
Theorem~\ref{lball}. Another difficulty is that we need covering
numbers under $\ell$ while the results in~\citet{Dryanov} are based on
integral $L_p$ metrics. 

Here is a high-level outline of the proof of Theorem~\ref{lball}. The
first step is to reduce the general problem to the case when $\phi_0
\equiv 0$. The result for $\phi_0 \equiv 0$ immediately implies the
result for all affine functions $\phi_0$. One can then
generalize to piecewise affine convex functions by repeating the
argument over each affine piece. Finally, the result is derived for
general $\phi_0$ by approximating $\phi_0$ by piecewise affine convex
functions. 

For $\phi_0 \equiv 0$, the class of convex functions under
consideration is $S(0, r)$. Unfortunately, functions in $S(0, r)$ are
not uniformly bounded; they only satisfy a weaker discrete $L^2$-type
boundedness constraint. We get around the lack of uniform boundedness
by noting that convexity and the $L^2$-constraint
imply that functions in $S(0, r)$ are uniformly bounded on
subintervals that are in the interior of $[x_1, x_n]$ (this is proved
via Lemma~\ref{1db}). We use this to partition the interval $[x_1,
x_n]$ into appropriate subintervals where Dryanov's metric entropy
result can be employed. We first carry out this argument for another class of
convex functions where the discrete $L^2$-constraint is replaced by an
integral $L^2$-constraint. From this result, we deduce the covering
numbers of $S(0, r)$ by using straightforward interpolation results
(Lemma~\ref{inter}). 

\subsection{Proof of Theorem~\ref{lball}}
 
\subsubsection{Reduction to the case when $\phi_0 \equiv 0$}\label{anee}
The first step is to note that it suffices to prove the theorem when
$\phi_0$ is the constant function equal to 0. For $\phi_0 \equiv 0$,
Theorem~\ref{lball} is equivalent the following statement: there
exists a constant $c > 0$, depending only on the ratio $c_1/c_2$, such
that  
\begin{equation}\label{zc}
  \log M(\epsilon, S(0, r), \ell) \leq c \left(\log \frac{en}{2c_1}
  \right)^{5/4} \left(\frac{\epsilon}{r} \right)^{-1/2} \qt{for all
    $\epsilon > 0$}. 
\end{equation}
Below, we prove Theorem~\ref{lball} assuming that~\eqref{zc} is
true. Let $\alpha \in \Ps_k$ be a piecewise affine function with
$k(\alpha) = k$. We shall show that 
  \begin{equation}\label{pff}
    \log M(\epsilon, S(\alpha, r), \ell) \leq c k^{5/4} \left(\log
      \frac{en}{2c_1} \right)^{5/4} \left(\frac{\epsilon}{r}
    \right)^{-1/2} \qt{for every $\epsilon > 0$}. 
  \end{equation}
This inequality immediately implies Theorem~\ref{lball}
 because for every $\phi_0, \phi \in \C$ and $\alpha \in \Ps$, we have 
 \begin{equation*}
   \ell^2(\phi, \alpha) \leq 2 \ell^2(\phi, \phi_0) + 2 \ell^2(\phi_0,
   \alpha)
 \end{equation*}
 by the trivial inequality $(a + b)^2 \leq 2 a^2 + 2 b^2$. This means
 that $\ell^2(\phi, \alpha) \leq 2 r^2 + 2 \ell^2(\phi_0, \alpha)$ for
 every $\phi \in S(\phi_0, r)$. Hence 
 \begin{equation*}
   M(\epsilon, S(\phi_0, r), \ell) \leq M(\epsilon, S(\alpha,
   \sqrt{2(r^2 + \ell^2(\phi_0, \alpha))}, \ell). 
 \end{equation*}
 This inequality and~\eqref{pff} together clearly
 imply~\eqref{lball.eq}. It suffices therefore to prove~\eqref{pff}. 

Suppose that
$\alpha$ is affine on each of the $k$ intervals $I_i = [t_{i-1}, t_i]$
for $i = 2, \dots, k$, where $0 = t_0 < t_1 < \dots < t_{k-1} < t_k =
1$, and $I_1 = [0, t_1]$. Then there exist $k$ affine functions
$\tau_1, \dots, \tau_k$ on $[0, 1]$ such that $\alpha(x) = \tau_i(x)$
for $x \in I_i$ for every $i = 1, \dots, k$. 

For every pair of functions $f$ and $g$ on $[0, 1]$, we have the
trivial identity: $\ell^2(f, g) = \sum_{i=1}^k \ell_{i}^2(f,
g)$ where
\begin{equation*}
  \ell_i^2(f, g) := \frac{1}{n} \sum_{j: x_j \in I_i} \left(f(x_j) -
    g(x_j) \right)^2.  
\end{equation*}
  As a result, we clearly have 
\begin{equation}\label{wch}
  M(\epsilon, S(\alpha, r), \ell) \leq \prod_{i=1}^k
  M(\epsilon/\sqrt{k}, S(\alpha, r), \ell_{i}).
\end{equation}
Fix an $i \in \{1, \dots, k\}$. Note that for every $f \in S(\alpha,
r)$, we have 
\begin{equation*}
  \ell_i^2(f, \tau_i) = \ell_i^2(f, \alpha) \leq \ell^2(f, \alpha)
  \leq r^2. 
\end{equation*}
Therefore
\begin{equation*}
  M(\epsilon/\sqrt{k}, S(\alpha, r), \ell_i) \leq M(\epsilon/\sqrt{k},
  S_i(\tau_i, r), \ell_i) 
\end{equation*}
where $S_i(\tau_i, r)$ consists of the class of all convex functions
$f : I_i \rightarrow \R$ for which $\ell_i^2(\tau_i, f) \leq r^2$. 

By the translation invariance of the Euclidean  distance and the fact
that $\phi - \tau$ is convex whenever $\phi$ is convex and $\tau$
is affine, it follows that 
\begin{equation*}
  M(\epsilon/\sqrt{k}, S_i(\tau_i, r), \ell_i) = M(\epsilon/\sqrt{k}, S_i(0,
  r), \ell_i)
\end{equation*}
where $S_i(0, r)$ is defined as the class of all convex functions $f:
I_i \rightarrow \R$ for which $\ell_i^2(0, f) \leq r^2$.

The covering number $M(\epsilon/\sqrt{k}, S_i(0, r), \ell_i)$ can be
easily bounded using~\eqref{zc} by the following scaling
argument. Let $J := \{j \in \{1, \dots, n\} : x_j \in I_i\}$ with $m$
being the cardinality of $J$. Also write $[a, b]$ for the interval
$I_i$ and let $u_j := (x_j - a)/(b-a)$ for $j \in J$. For $f, g \in
\C$, let 
\begin{equation*}
  \ell^{(u)}(f, g) := \left( \frac{1}{m} \sum_{j \in J} (f(u_j) -
    g(u_j))^2 \right)^{1/2}
\end{equation*}
and $S^{(u)}(0, \gamma) := \{f \in \C : \ell^{(u)}(f, 0) \leq
\gamma\}$. By associating, for each $f \in S_i(0, r)$, the convex
function $\tilde{f} \in \C$ defined by $\tilde{f}(x) := f(a +
(b-a)x)$, it can be shown that 
\begin{equation*}
  M(\epsilon/\sqrt{k}, S_i(0, r), \ell_i) = M \left(\sqrt{\frac{n}{m}}
  \frac{\epsilon}{\sqrt{k}}, S^{(u)}(0, r \sqrt{n/m}), \ell^{(u)}
\right). 
\end{equation*}
The assumption~\eqref{eq:DesignPts} implies that the distance between
neighboring points in $\{u_j, j \in J\}$ lies between $mc_1/(n(b-a))$
and $mc_2/(n(b-a))$. Therefore, by applying~\eqref{zc} to $\{u_j, j
\in J\}$ instead of $\{x_i\}$, we obtain the existence of a positive
constant $c$ depending only on the ratio $c_1/c_2$ 
such that  
\begin{align*}
  \log M \left(\sqrt{\frac{n}{m}}
  \frac{\epsilon}{\sqrt{k}}, S^{(u)}(0, r \sqrt{n/m}), \ell^{(u)}
\right)  &\leq c \left(\log
    \frac{en(b-a)}{2c_1} \right)^{5/4} \left(\frac{\epsilon}{\sqrt{k}r}
  \right)^{-1/2} \\
&\leq c \left(\log
    \frac{en}{2c_1} \right)^{5/4} \left(\frac{\epsilon}{\sqrt{k}r}
  \right)^{-1/2}. 
\end{align*}
The required inequality~\eqref{pff} now follows from the above 
and~\eqref{wch}.   

\subsubsection{The Integral Version}
We have established above that it suffices to prove
Theorem~\ref{lball}  for $\phi_0 \equiv 0$ i.e., it suffices to
prove~\eqref{zc}. The ball $S(0, r)$ consists of all convex functions
$\phi$ such that 
\begin{equation}\label{cco}
  \frac{1}{n} \sum_{i=1}^n \phi^2(x_i) \leq r^2. 
\end{equation}
For $a < b$ and $B > 0$, let $\ic([a, b], B)$ denote the class of all
real-valued convex functions $f$ on $[a, b]$ for which $\int_a^b
f^2(x) dx \leq B^2$. The ball $S(0, r)$ is intuitively very close to
the class $\ic([0, 1], r)$ the only difference being that the average
constraint~\eqref{cco} is replaced by the integral constraint
$\int_0^1 \phi^2(x) dx \leq r^2$ in $\ic([0, 1], r)$. We shall prove a
good upper bound for the metric entropy of $\ic([0, 1], r)$. The
metric entropy of $S(0, r)$ will then be derived as a consequence. 
\begin{thm}\label{intcov}
There exist a constant $c$ such that for every $0 <
\eta < 1/2$, $B > 0$ and $\epsilon > 0$, we have 
  \begin{equation}\label{intcov.eq}
    \log M \left(\epsilon, \ic([0, 1], B), L_2[\eta, 1 -
      \eta] \right) \leq c \left(\log \frac{e}{2\eta} \right)^{5/4}
    \left(\frac{\epsilon}{B} \right)^{-1/2} . 
  \end{equation}
where, by $L_2[\eta, 1-\eta]$, we mean the metric where the distance between
$f$ and $g$ is given by 
\begin{equation*}
 \left( \int_{\eta}^{1-\eta} \left(f(x) - g(x) \right)^2 dx
 \right)^{1/2}. 
\end{equation*}
\end{thm}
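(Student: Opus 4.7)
The plan is to reduce the bound to Dryanov's metric entropy estimate for uniformly bounded convex functions, by dyadically partitioning $[\eta,1-\eta]$ so that the integral $L^2$ constraint on $[0,1]$ produces a matched pointwise bound on each piece.

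The first ingredient is a pointwise bound: if $f$ is convex on $[0,1]$ with $\int_0^1 f^2\le B^2$, then for every $d\in(0,1/2)$,
\begin{equation*}
\sup_{x\in[d,1-d]}|f(x)|\ \le\ C_0\, B/\sqrt{d},
\end{equation*}
for a universal $C_0$. This is the integral analogue of Lemma~\ref{1db} and is proved by a case-split at a point $x_0\in[d,1-d]$. If $f(x_0)=M>0$, the sign of $f'_+(x_0)$ determines one of $[0,x_0]$ or $[x_0,1]$ on which $f\ge M$ (via the tangent-from-below inequality at $x_0$); each such half has length $\ge d$, so $\int f^2\ge M^2 d$. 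If $f(x_0)=-M<0$, the sub-level set $\{f\le -M/2\}$ is an interval around $x_0$: either its length is $\ge d$, giving $\int f^2\ge M^2 d/4$, or else its smallness forces one of $f'_\pm$ to be large at an endpoint of it, and convexity then pushes $f\ge M/2$ on an interval of length $\ge d$ elsewhere.

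Now partition $[\eta,1/2]$ into dyadic pieces $J_j:=[2^{-j-1},2^{-j}]$ for $j=1,\ldots,K:=\lceil\log_2(1/(2\eta))\rceil$ (truncating the last piece at $\eta$), and mirror the construction on $[1/2,1-\eta]$. On each $J_j$ the distance to $\{0,1\}$ is $2^{-j-1}$, so the previous bound gives $\sup_{J_j}|f|\le L_j:=C_0 B\cdot 2^{(j+1)/2}$; the key observation is that $L_j\sqrt{|J_j|}\le C_0 B$, independent of $j$. Rescaling $J_j$ affinely to $[0,1]$ and applying Dryanov's bound, namely $\log M(\delta,\{f\in\C:|f|\le 1\},L_2[0,1])\le c\,\delta^{-1/2}$, gives for every $\epsilon_j>0$,
\begin{equation*}
\log M\!\bigl(\epsilon_j,\,\ic([0,1],B)\cap\{|f|\le L_j\text{ on }J_j\},\,L_2(J_j)\bigr)\ \le\ c\bigl(L_j\sqrt{|J_j|}/\epsilon_j\bigr)^{1/2}\ \le\ c'(B/\epsilon_j)^{1/2}.
\end{equation*}

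Finally, concatenating $\epsilon_j$-covers on the $2K$ pieces yields an $\epsilon$-cover of $\ic([0,1],B)$ in $L_2[\eta,1-\eta]$ whenever $\sum_j\epsilon_j^2\le\epsilon^2$, since the squared $L_2[\eta,1-\eta]$ norm decomposes additively over the pieces. A Lagrange-multiplier argument shows that $\sum_{j=1}^{2K}(B/\epsilon_j)^{1/2}$, subject to $\sum_j\epsilon_j^2=\epsilon^2$, is minimized by the equal choice $\epsilon_j\equiv\epsilon/\sqrt{2K}$, yielding the total bound $c''(2K)^{5/4}(B/\epsilon)^{1/2}$. Since $2K\le c_3\log(e/(2\eta))$, this is exactly~\eqref{intcov.eq}. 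The main obstacle is calibrating the pointwise bound of the first step sharply enough that $L_j\sqrt{|J_j|}$ is $j$-independent; a weaker inequality (say $|f|\lesssim B/d$ instead of $B/\sqrt d$) would leave residual powers of $2^{j/4}$ in each Dryanov term and destroy the $5/4$ exponent. The negative-value case of that bound is the most delicate, since unlike the positive-value case it does not follow from a one-line tangent argument and requires the sub-level-set dichotomy described above.
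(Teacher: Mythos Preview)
Your proposal is correct and follows essentially the same approach as the paper: the pointwise bound you derive is exactly Lemma~\ref{1db} (with $p=2$, giving $C_0=2\sqrt{3}$), and the paper likewise dyadically partitions $[\eta,1/2]$ (using $\eta_i=2^i\eta$ rather than your $2^{-j}$, a cosmetic difference), applies Dryanov on each piece via the key identity that the uniform bound times the square root of the piece length is $j$-independent, and then takes equal $\epsilon_j$ to produce the $(\log(e/(2\eta)))^{5/4}$ factor. Your commentary on why the sharp $B/\sqrt d$ (rather than $B/d$) scaling is essential is well placed and matches the paper's logic exactly.
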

\begin{remark}
  We take the metric above to be $L_2[\eta, 1 - \eta]$ as
  opposed to $L_2[0, 1]$ because 
  \begin{equation}
    \label{vinf}
\log M \left(\epsilon, \ic([0, 1], B), L_2[0, 1] \right) =
\infty 
  \end{equation}
 To see this, take $f_j(t) = 2^{j/2} \max(0, 1 - 2^j t)$ for $t \in
 [0,1]$ and $j \geq 1$. It is then easy to check that $f_j \in \ic([0,
 1], B)$ for $B \geq 1/3$ and that $\int_0^1 (f_j - f_{j+1})^2 \geq c$
 for some positive constant $c$ which proves~\eqref{vinf}. The
 equality~\eqref{vinf} is  also the reason why the right hand side
 of~\eqref{intcov.eq}  approaches $\infty$ as $\eta \downarrow 0$. 
\end{remark}

The above theorem is a new result. If the constraint $\int_0^1
\phi^2(x) dx \leq B^2$ is replaced by the stronger constraint $\sup_{x
\in [0, 1]} |\phi(x)| \leq B$, then this has been proved
by~\citet{Dryanov}. Specifically,~\citet{Dryanov} considered 
the class $\C([a, b], B)$ consisting of all convex functions $f$ on
$[a, b]$ which satisfy $\sup_{x \in [a, b]}|f(x)| \leq B$ and proved
the following.~\citet{GS13} extended this to the multivariate case.  
\begin{thm}[Dryanov]\label{IEEE} 
  There exists a positive constant $c$ such that for
  every $B > 0$ and $b > a$, we have 
  \begin{equation}\label{IEEE.eq}
    \log M \left(\epsilon, \C([a, b], B), L_2[a,b] \right) \leq c \left(\frac{\epsilon}{B (b -
        a)^{1/2}} \right)^{-1/2} \qt{for every $\epsilon > 0$}. 
  \end{equation}
\end{thm}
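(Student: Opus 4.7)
The plan is to reduce by scaling to the canonical normalisation $[a,b]=[0,1]$, $B=1$, where the claim becomes $\log M(\epsilon, \C([0,1],1), L_2[0,1]) \leq c\, \epsilon^{-1/2}$, and then to handle the fact that functions in $\C([0,1],1)$ need not be uniformly Lipschitz by a dyadic decomposition of the domain near each endpoint. The product of covers across the pieces will produce the claimed bound once the scales on the dyadic pieces are chosen to balance the $L_2$-error against the log-cardinality of the cover.

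First I would cover the middle block $[1/4, 3/4]$. On this block, convexity and $|f| \leq 1$ together yield $|f(x)-f(y)| \leq 8|x-y|$ via the standard secant-line argument, so the class is uniformly bounded and uniformly Lipschitz. Bronshtein's theorem in its classical form (for bounded, uniformly Lipschitz convex functions) then gives an $L_\infty[1/4,3/4]$ cover at scale $\epsilon$ of log-size $O(\epsilon^{-1/2})$, which in turn is an $L_2[1/4,3/4]$ cover at the same scale.

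Next I would decompose $(0, 1/4]$ into dyadic subintervals $I_k = [2^{-k-1}, 2^{-k}]$ for $k \geq 2$, and handle $[3/4, 1)$ symmetrically. A secant-line bound shows that on $I_k$ every $f \in \C([0,1],1)$ is Lipschitz with constant $L_k = O(2^k)$: the slope at a point of $I_k$ lies between the slope of the secant from the point to $0$, of modulus at most $2 \cdot 2^{k+1}$, and the slope of the secant from the point to $1/2$, of modulus $O(1)$. After rescaling $I_k$ to $[0,1]$, the rescaled function has sup bound $1$ and Lipschitz constant $L_k |I_k| = O(1)$, both with constants independent of $k$; Bronshtein's theorem on $[0,1]$ then delivers an $L_\infty(I_k)$ cover of log-size at most $c\,\delta_k^{-1/2}$ at any scale $\delta_k$, with $c$ absolute. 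The corresponding $L_2(I_k)$ error is at most $\delta_k |I_k|^{1/2} = \delta_k \cdot 2^{-(k+1)/2}$.

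Finally I would pick $\delta_k = \epsilon\, 2^{\alpha k}$ for any fixed $\alpha \in (0, 1/2)$ to balance the two sums:
\begin{equation*}
\sum_{k \geq 2} \delta_k^2 |I_k| = \frac{\epsilon^2}{2} \sum_k 2^{(2\alpha - 1)k} \leq C \epsilon^2, \qquad \sum_{k \geq 2} c\, \delta_k^{-1/2} = c\, \epsilon^{-1/2} \sum_k 2^{-\alpha k /2} \leq C \epsilon^{-1/2}.
\end{equation*}
Taking the product of the middle cover and the two dyadic boundary covers yields an $L_2[0,1]$ cover of radius $O(\epsilon)$ and log-cardinality $O(\epsilon^{-1/2})$, which is the claim after undoing the normalisation. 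The main obstacle I anticipate is making Bronshtein's constant uniform across all dyadic pieces: one must verify that the affine change of variable $u = (x - 2^{-k-1})/|I_k|$ maps the relevant subclass of $\C(I_k, 1)$ into a fixed class on $[0,1]$ with sup and Lipschitz constants both $O(1)$, so the covering-number estimate has the same $c$ for every $k$. Everything else is bookkeeping.
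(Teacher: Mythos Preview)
The paper does not actually prove this theorem: it is quoted as a result of \citet{Dryanov}, and the only argument the paper supplies is the short remark that the restriction $\epsilon \leq \epsilon_0 B(b-a)^{1/2}$ in Dryanov's original statement can be dropped, since the $L_2[a,b]$-diameter of $\C([a,b],B)$ is at most $2B(b-a)^{1/2}$ and the left-hand side therefore vanishes for larger $\epsilon$.

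Your proposal, by contrast, is a genuine derivation of Dryanov's bound from Bronshtein's earlier result for uniformly bounded \emph{and uniformly Lipschitz} convex functions. The argument is sound: the secant bounds you quote do give Lipschitz constant $O(2^k)$ on $I_k$, the affine rescaling brings this back to $O(1)$ uniformly in $k$, and the choice $\delta_k = \epsilon\,2^{\alpha k}$ with $\alpha\in(0,1/2)$ makes both the $L_2$-error sum $\sum_k \delta_k^2 |I_k|$ and the log-cardinality sum $\sum_k \delta_k^{-1/2}$ converge geometrically. For large $k$ the scale $\delta_k$ eventually exceeds the sup-diameter and the corresponding cover has a single element, so those terms contribute nothing; the tail of the log-cardinality series is harmless either way. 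The only point to make precise is the uniformity of Bronshtein's constant across $k$, which you already flagged, and this follows because the rescaled classes sit inside a fixed $\C([0,1],1)$ with Lipschitz constant bounded by an absolute constant.

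It is worth noting that your dyadic-near-the-boundary strategy is exactly the mechanism the paper itself uses one level up, in the proof of Theorem~\ref{intcov}, to pass from Dryanov's result to the class $\ic([0,1],B)$ defined by an $L_2$ bound rather than a sup bound: there Lemma~\ref{1db} gives a sup bound on each dyadic block, Dryanov is invoked blockwise, and the pieces are summed. So while the paper takes Dryanov as a black box, your approach effectively reproves it with the same architectural idea the paper applies at the next stage.
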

\begin{remark}
In~\citet{Dryanov}, inequality~\eqref{IEEE.eq} was only asserted for
$\epsilon \leq \epsilon_0 B(b-a)^{1/2}$ for a positive constant
$\epsilon_0$. It turns out however that this condition is
redundant. This follows from the observation that the diameter of the
space $\C([a, b], B)$ in the $L_2[a, b]$ metric is at most
$2B(b-a)^{1/2}$ which means that the left hand side of~\eqref{IEEE.eq}
equals 0 for $\epsilon > 2B(b-a)^{1/2}$ and, thus, by changing the
constant $c$ suitably in Dryanov's result, we obtain~\eqref{IEEE.eq}. 
\end{remark}

The class $\ic([0, 1], B)$ is much larger than $\C([0, 1], B)$ because
the integral constraint $\int_0^1 \phi^2(x) dx \leq B^2$ is much
weaker than $\sup_{x \in [0, 1]} |\phi(x)| \leq B$. Therefore,
Theorem~\ref{intcov} does not directly follow from
Theorem~\ref{IEEE}. However, it is possible to derive
Theorem~\ref{IEEE} from Theorem~\ref{intcov} via the observation (made
rigorous in Lemma~\ref{1db}) that functions in $\ic([0, 1], B)$ become 
uniformly bounded on subintervals of $[0, 1]$ that are sufficiently
far away from the boundary points. On such subintervals, we may use
Theorem~\ref{IEEE} to bound the covering numbers. Theorem~\ref{intcov}
is then proved by putting together these different covering numbers as
shown below.  
\begin{proof}[Proof of Theorem~\ref{intcov}]
By a trivial scaling argument, we can assume without loss of
generality that $B = 1$. Let $l$ be the largest integer that is
strictly smaller than $-\log(2\eta)/\log 2$ and let $\eta_i := 2^i
\eta$ for $i = 0, \dots, l+1$. Observe that $\eta_l < 1/2 \leq
\eta_{l+1}$. 

Fix $i \in \{0, \dots, l\}$. By Lemma~\ref{1db}, the restriction of a
function $\phi \in \ic([0, 1], 1)$ to $[\eta_i, 
\eta_{i+1}]$ is convex and uniformly bounded by $2 \sqrt{3}
\eta_i^{-1/2}$. Therefore, by Theorem~\ref{IEEE},
there exists a positive constant $c$ such that we
can cover the functions in $\ic([0, 1], 1)$ in the $L_2[\eta_i,
\eta_{i+1}]$ metric to within $\alpha_{i}$ by a finite set having
cardinality at most 
\begin{equation*}
  \exp \left[ c \left(\frac{\alpha_i \sqrt{\eta_i}}{\sqrt{\eta_{i+1} -
        \eta_i}} \right)^{-1/2} \right] = \exp \left(c \alpha_i^{-1/2}
\right). 
\end{equation*}
Because
  \begin{equation*}
    \int_{\eta}^{1/2} \left(\phi(x) - f(x) \right)^2 dx \leq
    \sum_{i=0}^{l} \int_{\eta_i}^{\eta_{i+1}}
       \left(\phi(x) - f(x) \right)^2 dx, 
  \end{equation*}
we get a cover for functions in $\ic([0, 1], 1)$ in the
$L_2[\eta, 1/2]$ metric of size less than or equal to $\left(\sum_{i=0}^l
\alpha_i^2 \right)^{1/2}$ and cardinality at most $\exp \left(c
\sum_{i=0}^l \alpha_i^{-1/2} \right)$. 

Taking $\alpha_i = \epsilon (l+1)^{-1/2}$, we get that
\begin{equation*}
  \log M(\epsilon, \ic([0, 1], 1), L_2[\eta, 1/2]) \leq  c
    \epsilon^{-1/2} (l+1)^{5/4} \leq c_1 \epsilon^{-1/2} \left(\log
      \frac{e}{2\eta} \right)^{5/4}
\end{equation*}
where $c_1$ depends only on $c$. By an analogous argument, the above
inequality will also hold for  $\log M(\epsilon, \ic([0, 1], 1),
L_2[1/2, 1-\eta])$. The proof is completed by putting these two bounds
together.   
\end{proof}

\subsubsection{Completion of the Proof of Theorem~\ref{lball}}
We now complete the proof of Theorem~\ref{lball} by proving
inequality~\eqref{zc}. We will use Theorem~\ref{intcov}. We need to
switch between the pseudometrics $\ell$ and $L_2[\eta, 1-\eta]$. This
will be made convenient by the use of Lemma~\ref{inter}.  

By an elementary scaling argument, it follows that 
  \begin{equation*}
    M(\epsilon, S(0, r), \ell) = M(\epsilon/r, S(0, 1), \ell). 
  \end{equation*}
We, therefore, only need to prove~\eqref{zc} for $r = 1$. For ease of
notation, let us denote $S(0, 1)$ by $S$.   
  
Because $x_i - x_{i-1} \geq c_1/n$ for all $i = 2, \dots, n$,  we have
$x_2, \dots, x_{n-1} \in [c_1/n, 1 - 
(c_1/n)]$. We shall first 
  prove an upper bound for $\log M(\epsilon, S, \ell_1)$ where  
  \begin{equation*}
    \ell^2_1(\phi, \psi) := \frac{1}{n-2} \sum_{i=2}^{n-1}
    \left(\phi(x_i) - \psi(x_i) \right)^2. 
  \end{equation*}
  For each function $\phi \in S$, let $\tilde{\phi}$ be the convex function
  on $[x_2, x_{n-1}]$ defined by 
  \begin{equation*}
    \tilde{\phi}(x) := \frac{x_{i+1} - x}{x_{i+1} - x_i}\phi(x_i)  +
  \frac{x-x_i}{x_{i+1} - x_i} \phi(x_{i+1}) \qt{for $x_i \leq x \leq
    x_{i+1}$} 
  \end{equation*}
  where $i = 2, \dots, n-2$. Also let $\tilde{S} :=
  \left\{\tilde{\phi}: \phi \in S \right\}$. 

By Lemma~\ref{inter} and the assumption
  that $x_i - x_{i-1} \geq c_1/n$ for all $i$, we get that 
  \begin{equation*}
    \ell_1^2(\phi, \psi) \leq \frac{6}{c_1} \int_{x_2}^{x_{n-1}}
    \left(\tilde{\phi}(x) - \tilde{\psi}(x) \right)^2 dx
\end{equation*}
for every pair of functions $\phi$ and $\psi$ in $S$. Letting $\delta
:= \epsilon \sqrt{c_1/6}$ this inequality implies that 
  \begin{equation*}
    M \left(\epsilon, S, \ell_1 \right) \leq  M \left(\delta, \tilde{S}, L_2[x_2, x_{n-1}] \right).  
  \end{equation*}
 Again by Lemma~\ref{inter} and the assumption $x_i - x_{i-1} \leq
 c_2/n$, we have that 
 \begin{equation*}
   \int_{x_1}^{x_n} \tilde{\phi}^2(x) dx \leq \frac{c_2}{n}
   \sum_{i=1}^n \phi^2(x_i) \leq c_2 \qt{for every $\phi \in S$}. 
 \end{equation*}
As a result, we have that $\tilde{S} \subseteq \ic([x_1, x_n],
\sqrt{c_2})$. Further, because $x_2 \geq x_1 + c_1/n$ and $x_{n-1} \leq x_n -
c_1/n$, we get that 
\begin{equation*}
  M \left(\delta, \tilde{S}, L_2[x_2, x_{n-1}] \right) \leq M
    \left(\delta, \ic([x_1, x_n], \sqrt{c_2}), L_2[x_1 + \eta, x_{n} - \eta]
    \right)
\end{equation*}
where $\eta := c_1/n$. By a simple scaling argument, the covering
number on the right hand side above is upper bounded by  
\begin{equation}\label{ari}
 M \left(\frac{\delta}{\sqrt{x_n - x_1}}, \ic([0, 1], \sqrt{c_2(x_n -
     x_1)}), L_2\left[\frac{\eta}{x_n - x_1}, 1 - \frac{\eta}{x_n - 
    x_1}\right]  \right). 
\end{equation}
Indeed, for each $f \in \ic([x_1, x_n], \sqrt{c_2})$, we can associate
$\tilde{f}(y) := f(x_1 + y(x_n - x_1))$ for $y \in [0, 1]$. It is then
easy to check that $\tilde{f} \in \ic([0, 1], \sqrt{c_2(x_n - x_1)})$
and 
\begin{equation*}
  \int_{x_1 + \eta}^{x_n - \eta} \left(f_1(x) - f_2(x) \right)^2 dx =
  (x_n - x_1) \int_{\eta/(x_n - x_1)}^{1 - (\eta/(x_n - x_1))}
  \left(\tilde{f}_1(y) - \tilde{f}_2(y) \right)^2 dy,
\end{equation*}
from which~\eqref{ari} easily follows. From the bound~\eqref{ari}, it
is now easy to see that  (because $x_n - x_1 \leq 1$)   
\begin{equation*}
  M \left(\delta, \ic([x_1, x_n], \sqrt{c_2}), L_2[x_1 + \eta, x_{n} - \eta]
  \right)  \leq   M \left(\delta, \ic([0, 1], \sqrt{c_2}), L_2[\eta, 1 - \eta] 
  \right). 
\end{equation*}
Thus, by Theorem~\ref{intcov}, we assert the existence of a positive
constant $c$ a such that 
\begin{equation}\label{keyc}
\log  M(\epsilon, S, \ell_1) \leq c \left(\log \frac{en}{2c_1}
\right)^{5/4} \left(\frac{\sqrt{c_1} \epsilon}{\sqrt{c_2}} \right)^{-1/2}. 
\end{equation}
Now for every pair of functions $\phi$ and $\psi$ in $S$, we have 
\begin{equation*}
  \ell^2(\psi, \phi) \leq \ell_1^2(\psi, \phi) + \frac{1}{n} \sum_{i
    \in \{1, n\}} \left(\phi(x_i) - \psi(x_i) \right)^2.
\end{equation*}
We make the simple observation that $(\phi(x_1), \phi(x_n))$ lies in
the closed ball of radius $\sqrt{n}$ in $\R^2$ denoted by $B_2(0,
\sqrt{n})$. As a result, using ~\citet[Lemma
  4.1]{Pollard90Iowa}, we have
\begin{equation*}
  M(\epsilon, S, \ell) \leq M(\frac{\epsilon}{\sqrt{2}}, S, \ell_1)
  M(\frac{\sqrt{n}\epsilon}{\sqrt{2}}, B_2(0, \sqrt{n})) \leq \left(1
    + \frac{3\sqrt{2}}{\epsilon} \right)^2
  M(\frac{\epsilon}{\sqrt{2}}, S, \ell_1) 
\end{equation*}
where the covering number of $B_2(0, \sqrt{n})$ is in the usual
Euclidean metric. Using~\eqref{keyc}, we get 
\begin{equation}\label{au}
  \log M(\epsilon, S, \ell) \leq 2 \log \left(1 +
    \frac{3\sqrt{2}}{\epsilon} \right) + c \left(\log
    \frac{en}{2c_1} \right)^{5/4} \left(\frac{\sqrt{c_1}
      \epsilon}{\sqrt{2c_2}} \right)^{-1/2} .  
\end{equation}
Because $\log(1 + x) \leq 3 \sqrt{x}$ for all $x > 0$, the first term
in the right hand side above is bounded by a constant multiple of
$\epsilon^{-1/2}$. This proves~\eqref{zc} provided the constant $c$ is
renamed appropriately. 

\subsection{Proof of Theorem~\ref{lcur}}
In our proof below, we shall make use of Lemma~\ref{buj} (stated and
proved in Section~\ref{AuxRes}) which bounds the distance between
functions in $\cur(a, b, \kappa_1, \kappa_2)$ and their piecewise
linear interpolants.  

Fix $m \geq 1$ and let $t_i = a + (b-a)i/m$ for $i = 0, \dots,
  m$. For each $i = 1, \dots, m$, let $\alpha_i$ define the linear
  interpolant of the points $(t_{i-1}, \phi_0(t_{i-1}))$ and $(t_i,
  \phi_0(t_i))$ i.e., 
  \begin{equation*}
    \alpha_i(x) := \phi_0(t_{i-1}) + \frac{\phi_0(t_i) -
      \phi_0(t_{i-1})}{t_i - t_{i-1}}  \left(x - t_{i-1} \right)
    \qt{for $x \in [0, 1]$}. 
  \end{equation*}
By error estimates for linear interpolation (see e.g., Chapter 3 of
\citet{Atkinson88}), for every $x \in [t_{i-1}, t_i]$, there exists a
point $t_x \in [t_{i-1}, t_i]$ for which 
  \begin{equation*}
    |\phi_0(x) - \alpha_i(x)| = (x - t_{i-1})(t_i - x)
    \frac{\phi_0''(t_x)}{2}
  \end{equation*}
  which implies, because $\phi_0 \in \cur(a, b, \kappa_1, \kappa_2)$,
  that 
  \begin{equation}\label{jabil}
 |\phi_0(x) - \alpha_i(x)| \leq (x - t_{i-1})(t_i - x)
 \frac{\kappa_2}{2} \leq \frac{\kappa_2}{8}(t_i - t_{i-1})^2 =
 \frac{(b-a)^2 \kappa_2}{8 m^2}
  \end{equation}
for every $x \in [a, b]$. By convexity of $\phi_0$, it is
obvious that $\alpha_i(x) \geq \phi_0(x)$ for $x \in [t_{i-1}, t_i]$
and $\alpha_i(x) \leq \phi_0(x)$ for $x \notin [t_{i-1}, t_i]$. 

Now for each $\tau \in \{0, 1\}^m$, let us define 
  \begin{equation*}
    \phi_{\tau}(x) := \max \left(\phi_0(x), \max_{i : \tau_i = 1}
      \alpha_i(x) \right)  \qt{for $x \in [0, 1]$}. 
  \end{equation*}
  The functions $\phi_{\tau}$ are clearly convex because they equal
  the pointwise maximum of convex functions. Moreover, for $x \in
  [t_{i-1}, t_i]$, we have
\begin{equation*}
\phi_{\tau}(x) = \left\{ \begin{array}{rl}
 \alpha_i(x) &\mbox{if $\tau_i = 1$} \\
 \phi_0(x) &\mbox{if $\tau_i = 0$.}
       \end{array} \right.
\end{equation*}
Also, from~\eqref{jabil}, 
\begin{equation*}
  \sup_{x \in [0, 1]} \left|\phi_{\tau}(x) - \phi_0(x) \right| \leq
  \max_{1 \leq i \leq m} \sup_{x \in [t_{i-1}, t_i]} \left|\phi_{0}(x)
    - \alpha_i(x) \right| \leq \frac{(b-a)^2\kappa_2}{8 m^2}. 
\end{equation*}
Because $\ell(\phi_{\tau}, \phi_0) \leq \sup_{x} |\phi_{\tau}(x) -
\phi_0(x)|$, it follows that $\phi_{\tau} \in S(\phi_0, r)$ provided 
\begin{equation}\label{bai}
  \frac{(b-a)^2 \kappa_2}{8 m^2} \leq r. 
\end{equation}
Observe now that for every $\tau, \tau' \in \{0, 1\}^m$,  
\begin{equation}\label{rsh}
  \ell^2\left(\phi_{\tau}, \phi_{\tau'} \right) = \sum_{i: \tau_i \neq
  \tau_i'} \ell^2 \left(\phi_0, \max(\phi_0, \alpha_i) \right) \geq
\ham(\tau, \tau') \min_{1 \leq i \leq m} \ell^2(\phi_0, \max(\phi_0,
\alpha_i)) 
\end{equation}
where $\ham(\tau, \tau') := \sum_i \{\tau_i \neq \tau'_i \}$. We now
use Lemma~\ref{buj} to bound $\ell^2(\phi_0, \max(\phi_0, \alpha_i))$
from below. Since $\alpha_i$ is the linear interpolant of $(t_{i-1},
\phi_0(t_{i-1}))$ and $(t_i, \phi_0(t_i))$, we use Lemma~\ref{buj}
(inequality~\eqref{ilo}) with $a = t_{i-1}$ and $b = t_i$ to assert  
\begin{equation*}
  \ell^2(\phi_0, \max(\phi_0, \alpha_i)) \geq \frac{\kappa_1^2 (t_i -
    t_{i-1})^5}{4096 c_2} = \frac{\kappa_1^2 (b-a)^5}{4096 c_2 m^5} 
\end{equation*}
provided 
\begin{equation}\label{mck}
  n \geq \frac{4c_2}{t_i-t_{i-1}} = \frac{4mc_2}{b-a}. 
\end{equation}
From~\eqref{rsh}, we thus have
\begin{equation*}
  \ell^2(\phi_{\tau}, \phi_{\tau'}) \geq \ham(\tau, \tau')
  \frac{\kappa_1^2 (b-a)^5}{4096 c_2 m^5} . 
\end{equation*}
Using now the Varshamov-Gilbert lemma (see, for example,~\citet[Lemma
4.7]{Massart03Flour}) which asserts the existence of a subset $W$ of
$\{0, 1\}^m$ with cardinality, $|W| \geq \exp(m/8)$ such that
$\ham(\tau, \tau') \geq m/4$ for all $\tau, \tau' \in W$ with $\tau
\neq \tau'$, we get that 
\begin{equation}\label{xwe}
  \ell^2(\phi_{\tau}, \phi_{\tau'}) \geq \frac{\kappa_1^2
    (b-a)^5}{16384 c_2 m^4} \qt{for all $\tau, \tau' \in W$ with $\tau
    \neq \tau'$}. 
\end{equation}
Let us now fix $\epsilon > 0$ and choose $m$ so that 
\begin{equation*}
  m^4 = \frac{\kappa_1^2(b-a)^5}{16384 c_2 \epsilon^2}. 
\end{equation*}
From~\eqref{xwe}, we then see that $\{\phi_{\tau}: \tau \in W\}$ is an
$\epsilon$-packing set under the pseudometric $\ell$. The
condition~\eqref{bai} would hold provided 
\begin{equation*}
  \epsilon \leq \frac{\kappa_1 \sqrt{b-a}}{16 \sqrt{c_2} \kappa_2} r. 
\end{equation*}
Also, the condition~\eqref{mck} is equivalent to 
\begin{equation*}
  \epsilon \geq \frac{c_2^2 \sqrt{b-a} \kappa_1}{8 \sqrt{c_2}n^2} . 
\end{equation*}
We have therefore showed that for $\epsilon$ satisfying the above
pair of inequalities, there exists an $\epsilon$-packing subset of
$S(\phi_0, r)$ with cardinality $|W|$ satisfying 
\begin{equation*}
 \log |W| \geq \frac{m}{8} \geq \frac{\sqrt{\kappa_1} (b-a)^{5/4}}{96
   c_2^{1/4}} \epsilon^{-1/2}.
\end{equation*}
The proof of Theorem~\ref{lcur} is now complete if we take 
\begin{equation*}
  \epsilon_0 := \frac{\kappa_1 \sqrt{b-a}}{16 \kappa_2 \sqrt{c_2}}
  \quad \text{ and } \quad c := \frac{\sqrt{\kappa_1} (b-a)^{5/4}}{96
    c_2^{1/4}} \quad \text{ and } \quad \epsilon_1 :=  \frac{c_2^2
    \sqrt{b-a} \kappa_1}{8 \sqrt{c_2}} .  
\end{equation*}
\section{Proofs of the Risk Bounds of the LSE}\label{prbs}
In this section, we provide the proofs of Theorems~\ref{rig1d}
and~\ref{adap}. As mentioned in Section~\ref{MetricEnt}, these two
theorems together imply our main risk bound Theorem~\ref{twinf} of the
convex LSE. Our proofs are based on the local metric entropy result
(Theorem~\ref{lball}) of the space of univariate convex functions 
derived in the previous section together with standard results on the
risk behavior of ERM procedures. Before proceeding further, let us state precisely the result from the literature on ERM procedures that we use to analyze the risk of $\hat{\phi}_{ls}$. There exist many such results but they are all similar in spirit and the following result from~\citet[Theorem
9.1]{VandegeerBook} is especially convenient to use.  
\begin{thm}[Van de Geer]\label{vandy}
For each $r > 0$, let $$S(\phi_0, r) := \{\phi \in \C: \ell^2(\phi_0,
\phi) \leq r^2 \}.$$ Suppose $H$ is a function on $(0, \infty)$ such that
  \begin{equation*}
    H(r) \geq \int_0^r \sqrt{\log M(\epsilon, S(\phi_0, r) , \ell)} 
    \ d\epsilon \qt{for every $r > 0$}
  \end{equation*}
and such that $H(r)/r^2$ is decreasing on $(0, \infty)$. Then there
exists a universal constant $C$  such that
\begin{equation*}
  \P_{\phi_0}\left(\ell^2(\hat{\phi}_{ls}, \phi_0) > \delta \right)
  \leq C \sum_{s \geq 0} \exp \left(- \frac{n2^{2s}\delta}{C^2
      \sigma^2} \right) 
\end{equation*}
for every $\delta > 0$ satisfying $\sqrt{n} \delta \geq C\sigma
H(\sqrt{\delta})$.  
\end{thm}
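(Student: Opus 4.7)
The plan is to prove Theorem~\ref{vandy} by combining the basic inequality arising from the optimality of $\hat{\phi}_{ls}$ with a peeling (or slicing) argument on the resulting Gaussian empirical process, controlling the process on each shell via Dudley's entropy integral bound and Borell's Gaussian concentration inequality.

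First, I would extract the basic inequality. Comparing $\hat\phi_{ls}$ with the comparator $\phi_0 \in \C$ in the least squares criterion and expanding $Y_i = \phi_0(x_i) + \xi_i$, one obtains
\begin{equation*}
n \, \ell^2(\hat\phi_{ls}, \phi_0) \leq 2 \sum_{i=1}^n \xi_i \bigl(\hat\phi_{ls}(x_i) - \phi_0(x_i) \bigr).
\end{equation*}
On the event $\{\ell^2(\hat\phi_{ls}, \phi_0) > \delta\}$ the LSE lies in some shell $\{\phi \in \C : 2^s \sqrt\delta < \ell(\phi, \phi_0) \leq 2^{s+1}\sqrt\delta\}$ with $s \geq 0$, and the basic inequality then forces
\begin{equation*}
Z_s := \sup_{\phi \in S(\phi_0, 2^{s+1}\sqrt\delta)} \frac{1}{n} \sum_{i=1}^n \xi_i \bigl(\phi(x_i) - \phi_0(x_i) \bigr) > \frac{2^{2s}\delta}{2}.
\end{equation*}
The union bound thus reduces the claim to controlling $\P_{\phi_0}(Z_s > 2^{2s}\delta/2)$ uniformly in $s$.

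Next, I would view $Z_s$ as the supremum of a centered Gaussian process indexed by $\phi \in S(\phi_0, r_s)$ with $r_s := 2^{s+1}\sqrt\delta$, whose intrinsic pseudometric is $\sigma \ell(\cdot,\cdot)/\sqrt n$ and whose maximal standard deviation is at most $\sigma r_s/\sqrt n$. Dudley's entropy integral then gives
\begin{equation*}
\E Z_s \leq \frac{C\sigma}{\sqrt n} \int_0^{r_s} \sqrt{\log M(\epsilon, S(\phi_0, r_s), \ell)} \, d\epsilon \leq \frac{C\sigma}{\sqrt n} H(r_s),
\end{equation*}
and the monotonicity of $H(r)/r^2$ yields $H(r_s) \leq 2^{2(s+1)} H(\sqrt\delta)$. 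Under the hypothesis $\sqrt n \delta \geq C\sigma H(\sqrt\delta)$, with $C$ chosen large enough, this forces $\E Z_s \leq 2^{2s}\delta/4$.

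Finally, applying Borell's Gaussian concentration inequality to $Z_s$ yields
\begin{equation*}
\P\bigl(Z_s > \E Z_s + t\bigr) \leq \exp\!\left(-\frac{n\, t^2}{2\sigma^2 r_s^2}\right),
\end{equation*}
and taking $t = 2^{2s}\delta/4$ with $r_s = 2^{s+1}\sqrt\delta$ gives $\P(Z_s > 2^{2s}\delta/2) \leq \exp(-c\, n 2^{2s}\delta/\sigma^2)$; summing the geometric series over $s \geq 0$ delivers the stated bound. The main obstacle is the careful bookkeeping needed so that a single absolute constant $C$ simultaneously absorbs the Dudley constant, the extra factor $2^{2(s+1)}$ arising from the $H(r)/r^2$ monotonicity, and the constant in Borell's inequality, uniformly across all shells $s \geq 0$.
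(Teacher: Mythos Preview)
The paper does not give its own proof of this theorem; it is quoted directly from \citet[Theorem 9.1]{VandegeerBook} and used as a black box. The only related argument in the paper is the brief sketch for the misspecified analogue (Theorem~\ref{genvan}), which derives the basic inequality and then explicitly defers the remainder to Van de Geer's proof.

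Your proposal is correct and is precisely the standard route underlying Van de Geer's Theorem~9.1: basic inequality, peeling into dyadic shells $2^s\sqrt{\delta} < \ell(\phi,\phi_0) \leq 2^{s+1}\sqrt{\delta}$, Dudley's entropy integral to control the expected supremum of the Gaussian process on each shell, and Gaussian concentration (Borell) for the deviation. The use of the monotonicity of $H(r)/r^2$ to push $H(2^{s+1}\sqrt{\delta}) \leq 2^{2(s+1)} H(\sqrt{\delta})$ and thereby make the condition $\sqrt{n}\,\delta \geq C\sigma H(\sqrt{\delta})$ propagate to every shell is exactly the point of that hypothesis. So your sketch fills in exactly the details the paper omits, by the same method the cited reference uses; there is no genuine difference in approach.
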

Let us note that our local metric entropy result, Theorem~\ref{lball},
easily implies an upper bound for the entropy integral 
  \begin{equation}\label{mavik}
    \int_0^r \sqrt{\log M(\epsilon, S(\phi_0, r), \ell)} d\epsilon
  \end{equation}
appearing in Theorem~\ref{vandy}. Indeed, using the bound given
by~\eqref{lball.eq} for $M(\epsilon, S(\phi_0, r), \ell)$ above and
integrating, we obtain that~\eqref{mavik} is bounded from above by 
  \begin{equation}\label{eib.eq}
    K \left(\log \frac{en}{2c_1} \right)^{5/8} r^{3/4}
    \inf_{\alpha \in \Ps} \left[ k^{5/8}(\alpha)  \left(r^2 +
        \ell^2(\phi_0, \alpha) \right)^{1/8} \right]
\end{equation}
  for every $\phi_0 \in \C$ and $r > 0$ where $K$ is a constant that
  only depends on the ratio $c_1/c_2$. 

\subsection{Proof of Theorem~\ref{rig1d}}\label{GblRates}
Let us define 
\begin{equation*}
  \delta_0 := A \left(\frac{\sigma^2}{n} \right)^{4/5}
  R^{2/5} \log \frac{en}{2c_1}
\end{equation*}
where $A$ is a constant whose value will be specified shortly. Observe
that $\delta_0 \leq R^2$ whenever  $n \geq A^{5/4} \left(\log
 ((en)/(2c_1)) \right)^{5/4} \sigma^2/R^2$. We use the
bound~\eqref{eib.eq} for the entropy 
integral~\eqref{mavik}. By restricting the infimum in 
the right hand side of~\eqref{eib.eq} to affine functions (i.e.,
$\alpha \in \Ps_1$) for which $k(\alpha) = 1$, we obtain (note that
$\inf_{\alpha \in \Ps_1} \ell^2(\phi_0, \alpha) = \lin^2(\phi_0)
\leq R^2$)
\begin{equation}\label{messi}
  \int_0^r \sqrt{\log M(\epsilon, S(\phi_0, r), \ell)} d\epsilon \leq
  K \left(\log \frac{en}{2c_1} \right)^{5/8} r^{3/4} \left(r^2 + R^2
  \right)^{1/8} 
\end{equation}
for every $r > 0$. Suppose now that 
\begin{equation}
  \label{ncon}
  n \geq A^{5/4} \left(\log \frac{en}{2c_1}
    \right)^{5/4} \frac{\sigma^2}{R^2}
\end{equation}
so that $\delta_0 \leq R^2$ and inequality~\eqref{messi} holds for
every $r > 0$. Let $H(r)$ denote the right hand side
of~\eqref{messi}. It is clear that $H(r)/r^2$ is decreasing on $(0,
\infty)$. As a  result, a condition of the form  $\sqrt{n} \delta \geq
C \sigma H(\sqrt{\delta})$ for some positive constant $C$ holds for
every $\delta \geq \delta_0$ provided it holds for $\delta =
\delta_0$. Clearly  
\begin{equation*}
  \frac{H(\sqrt{\delta_0})}{\delta_0} = K \left(\log \frac{en}{2c_1}
  \right)^{5/8} \delta_0^{-5/8} \left(\delta_0 + R^2 \right)^{1/8}.
\end{equation*}
Assuming that~\eqref{ncon} holds and noting then that $\delta_0 \leq
R^2$, we get  
\begin{equation*}
  \frac{H(\sqrt{\delta_0})}{\delta_0} \leq 2^{1/8} K \left(\log
    \frac{en}{2c_1} \right)^{5/8} \delta_0^{-5/8} R^{1/4} =
  2^{1/8} K A^{-5/8} \frac{\sqrt{n}}{\sigma}. 
\end{equation*}
We shall now use Theorem~\ref{vandy}. Let $C$ be the
constant given by  
Theorem~\ref{vandy}. By the above inequality, the condition $\sqrt{n}
\delta \geq C \sigma H(\sqrt{\delta})$ holds for each $\delta \geq
\delta_0$ provided $A = 2^{1/5} (C K)^{8/5}$. Thus 
by Theorem~\ref{vandy}, we obtain  
\begin{equation*}
  \P_{\phi_0} \left(\ell^2(\hat{\phi}_{ls}, \phi_0) > \delta \right)
  \leq C \sum_{s \geq 0} \exp \left(-\frac{n2^{2s} \delta}{C^2
      \sigma^2} 
  \right) 
\end{equation*}
for all $\delta \geq \delta_0$ whenever $n$
satisfies~\eqref{ncon}. Using the expression for $\delta_0$
and~\eqref{ncon}, we get  for $\delta \ge \delta_0$,
\begin{equation}\label{auxie}
  \frac{n\delta}{\sigma^2} \geq \frac{n \delta_0}{\sigma^2} = A
  \left(\frac{n}{\sigma^2} \right)^{1/5} R^{2/5}  \log
  \frac{en}{2c_1} \geq A^{5/4} \left(\log \frac{en}{2c_1} \right)^{5/4}. 
\end{equation}
We thus have
\begin{equation*}
  \P_{\phi_0} \left(\ell^2(\hat{\phi}_{ls}, \phi_0) > \delta \right)
  \leq C_1 \exp \left(- \frac{n \delta}{C_1 \sigma^2} \right) \qt{for
    all $\delta \geq \delta_0$}
\end{equation*}
for some constant $C_1$ (depending only on $C$ and $A =
2^{1/5}(CK)^{8/5}$) provided $n$ 
satisfies~\eqref{ncon}. Integrating both sides of this inequality
with respect to $\delta$ (and using~\eqref{auxie} again), we obtain
the risk bound 
\begin{equation*}
  \E_{\phi_0} \ell^2(\hat{\phi}_{ls}, \phi_0 ) \leq C_2 \delta_0 = C_2
  \left(\frac{\sigma^2}{n} \right)^{4/5} A R^{2/5} \log
  \frac{en}{2c_1} 
\end{equation*}
for some positive constant $C_2$ depending only on $C$ and
$K$. Because $C$ is an absolute constant and $K$ only depends on the
ratio $c_1/c_2$, the proof is complete by an appropriate renaming of
the constant $C$.  

\subsection{Proof of Theorem~\ref{adap}}\label{Adap}
For each $1 \leq k \leq n$, let $$\ell_k^2 = \inf \{\ell^2(\phi_0,
\alpha) : \alpha \in \Ps \text{ and } k(\alpha) = k \}$$ so that 
\begin{equation*}
  \inf_{\alpha \in \Ps} \left(\ell^2(\phi_0, \alpha) + \frac{\sigma^2
      k^{5/4}(\alpha)}{n} \right) = \inf_{1 \leq k \leq n}
  \left(\ell_k^2 + \frac{\sigma^2 k^{5/4}}{n} \right). 
\end{equation*}
It is also easy to check that 
\begin{equation*}
  \ell_1^2 \geq \ell_2^2 \geq \dots \geq \ell_n^2 = 0. 
\end{equation*}
As a result, there exists an integer $u \in \{1, \dots, n\}$ such that
$\ell_k^2 > \sigma^2 k^{5/4}/n$ if $1 \leq k < u$ and $\ell_k^2 \leq
\sigma^2 k^{5/4}/n$ if $k \geq u$. This means that when $1 \leq k < u$
(which implies that $u \geq 2$ or $u-1 \geq u/2$)
\begin{equation*}
  \ell_k^2 + \frac{\sigma^2 k^{5/4}}{n} \geq \ell_{u-1}^2 >
  \frac{\sigma^2}{n} (u-1)^{5/4} \geq \frac{\sigma^2
    u^{5/4}}{2^{5/4}n}. 
\end{equation*}
It then follows that
\begin{equation*}
  \inf_{1 \leq k \leq n} \left(\ell_k^2 + \frac{\sigma^2 k^{5/4}}{n}
  \right) \geq \frac{\sigma^2 u^{5/4}}{2^{5/4}n}. 
\end{equation*}
Consequently, the proof will be complete if we show that
\begin{equation}\label{peq}
  \E_{\phi_0} \ell^2(\phi_0, \hat{\phi}_{ls}) \leq C \left( \log
    \frac{en}{2c_1} \right)^{5/4} \frac{\sigma^2 u^{5/4}}{n}.
\end{equation}
To prove this, we start by defining 
  \begin{equation*}
    \delta_0 := A \left(\log \frac{en}{2c_1}
    \right)^{5/4} \frac{\sigma^2 u^{5/4}}{n}
  \end{equation*}
for a constant $A$ whose value will be specified shortly. Because
$\ell_u^2 \leq \sigma^2 u^{5/4}/n$, it follows that $\ell_u^2 \leq
\delta_0/A$. 

By~\eqref{eib.eq}, there exists a positive constant
  $K$ depending only on the ratio $c_1/c_2$ such that
  \begin{align*}
    \int_0^r \sqrt{\log M(\epsilon, S(\phi_0, r), \ell)} d\epsilon
    &\leq K \left(\log \frac{en}{2c_1} \right)^{5/8} \inf_{\alpha \in
      \Ps} \left[ k^{5/8}(\alpha)
    r^{3/4} \left(r^2 + \ell^2(\phi_0, \alpha) \right)^{1/8} \right]
  \\
&\leq K \left(\log \frac{en}{2c_1} \right)^{5/8} \inf_{\alpha \in
      \Ps_u} \left[ k^{5/8}(\alpha)
    r^{3/4} \left(r^2 + \ell^2(\phi_0, \alpha) \right)^{1/8} \right]
  \\
&\leq K \left(\log \frac{en}{2c_1} \right)^{5/8} u^{5/8} r^{3/4}
\left(r^2 + \ell_u^2 \right)^{1/8}. 
  \end{align*}
for every $r > 0$. Let $H(r)$ denote the right hand side above. It is
clear that $H(r)/r^2$ is decreasing on $(0, \infty)$. As a result, a
condition of the form $\sqrt{n} \delta \geq C \sigma H(\sqrt{\delta})$
for some positive constant $C$ holds for every $\delta \geq \delta_0$
provided it holds for $\delta = \delta_0$. Because $\ell_u^2 \leq
\delta_0/A$, we have
  \begin{equation*}
    H(\sqrt{\delta_0}) \leq K \left(\log \frac{en}{2c_1} \right)^{5/8}
    u^{5/8} \sqrt{\delta_0} \left(1 + \frac{1}{A} \right)^{1/8}. 
  \end{equation*}
  Consequently, 
  \begin{equation}\label{kal}
    \frac{H(\sqrt{\delta_0})}{\delta_0} \leq \frac{K}{\sqrt{A}}  \left(1 +
      \frac{1}{A} \right)^{1/8}
    \frac{\sqrt{n}}{\sigma}. 
  \end{equation}
  We shall now use Theorem~\ref{vandy}. Let $C$ be the positive 
  constant given by Theorem~\ref{vandy}. By inequality~\eqref{kal}, we
  can clearly choose $A$ depending only on $K$ and $C$ so that
  $\sqrt{n} \delta_0 \geq C \sigma H(\sqrt{\delta_0})$. Because
  $H(r)/r^2$ is a decreasing function of $r$, this choice of $A$ also
  ensures that $\sqrt{n} \delta \geq C \sigma H(\sqrt{\delta})$ for
  every $\delta \geq \delta_0$. Thus by Theorem~\ref{vandy}, we obtain 
  \begin{equation}\label{vep}
    \P_{\phi_0} \left(\ell^2(\hat{\phi}_{ls}, \phi_0) > \delta \right)
    \leq C \sum_{s \geq 0} \exp \left(- \frac{n2^{2s} \delta}{C^2
        \sigma^2} \right) \qt{for all $\delta \geq \delta_0$}. 
  \end{equation}
Note further, from the definition of $\delta_0$, that $\delta_0 \geq
\sigma^2 A/n$   which implies that the sum on the right hand side
of~\eqref{vep} is dominated by the first term. We thus have 
  \begin{equation*}
    \P_{\phi_0} \left(\ell^2(\hat{\phi}_{ls}, \phi_0) > \delta \right)
    \leq C_1 \exp \left(- \frac{n \delta}{C_1 \sigma^2} \right)
    \qt{for all $\delta \geq \delta_0$}.  
  \end{equation*}
  for a constant $C_1$ depending upon only $C$ and $A$. The required
  risk bound~\eqref{peq} is now derived by integrating both sides
  of the above inequality with respect to $\delta$ and using that
  $\delta_0 \geq \sigma^2 A/n$.

\section{Non-adaptable convex functions}\label{LowerBd}
We showed that the risk of the convex LSE is always bounded from above
by $n^{-4/5}$ up to logarithmic factors in $n$ and that for convex
functions that are well-approximable by piecewise affine functions
with not too many pieces, the risk of the convex LSE is bounded by
$1/n$ up to log factors. The reason why the risk is much smaller for
these functions is that the balls around them have small
sizes. We also showed in Theorem~\ref{lcur} that for convex functions
with curvature, the balls are really non-local. Here, we show
that for such convex functions, in a very strong sense, the rate
$n^{-4/5}$  cannot be improved by \textit{any} estimator. 

Recall the class of functions, $\cur(a, b, \kappa_1, \kappa_2)$, that
was defined in Theorem~\ref{lcur}. The constants $a, b, \kappa_1$ and
$\kappa_2$ will be fixed constants in this section and we shall
therefore refer to $\cur(a, b, \kappa_1, \kappa_2)$ by just $\cur$. 
 For every function $\phi_0 \in \cur$, let us define the {\it local} 
neighborhood $N(\phi_0)$ of $\phi_0$ in $\C$ by  
\begin{equation*}
  N(\phi_0) := \left\{\phi \in \C : \sup_{x \in [0, 1]}|\phi(x) -
    \phi_0(x)| \leq \left(\frac{\kappa_2 c_1^2}{32}\right)^{1/5}
    \left(\frac{\sigma^2}{n} \right)^{2/5} \right\}.  
\end{equation*}
Recall that the constant $c_1$ is defined in~\eqref{eq:DesignPts}. We
define the local minimax risk of $\phi_0 \in \cur$ to be  
\begin{equation*}
  \mix_n(\phi_0) := \inf_{\hat{\phi}} \sup_{\phi \in N(\phi_0)}
  \E_{\phi} \ell^2(\phi, \hat{\phi}),
\end{equation*}
the infimum above being over all possible estimators
$\hat{\phi}$. $\mix_n(\phi_0)$ represents the smallest possible risk
under the knowledge that the unknown convex function $\phi$ lies in
the local neighborhood $N(\phi_0)$ of $\phi_0$. 

In the next theorem, we shall show that the local minimax risk of
every function $\phi_0 \in \cur$ is bounded from below by a constant
multiple of $n^{-4/5}$. Observe that the $l^2$ diameter of $N(\phi_0)$
defined as $\sup_{\phi_1, \phi_2 \in N(\phi_0)} \ell^2(\phi_1,
\phi_2)$ is bounded from above by $n^{-4/5}$ up to multiplicative
factors that are independent of $n$. Therefore, the supremum risk over
$N(\phi_0)$ of any reasonable estimator is bounded from above by
$n^{-4/5}$ up to multiplicative factors. The next theorem shows that if
$\phi_0 \in \cur$, then the supremum risk of every estimator is also
bounded from below by $n^{-4/5}$ up to multiplicative
factors. Therefore, one cannot estimate $\phi_0$ at a
rate faster than $n^{-4/5}$. 

\begin{thm}[Lower bound]\label{ram}
For every $\phi_0 \in \cur(a, b, \kappa_1, \kappa_2)$, we have 
\begin{equation}\label{ram.eq}
  \mix_n(\phi_0) \geq \frac{\kappa_1^2}{4096c_2}
  \left(\frac{\sqrt{c_1}}{\kappa_2}\right)^{8/5}
 (b-a) \left(\frac{\sigma^2}{n} \right)^{4/5}
\end{equation}
provided $n^2 \geq (2c_2)^{5/2} \kappa_2/(\sigma \sqrt{c_1})$. 
\end{thm}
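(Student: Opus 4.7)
My plan is to establish Theorem~\ref{ram} via a standard minimax lower bound argument of Fano--Tsybakov type, applied to a hypothesis family drawn directly from the packing set constructed inside the proof of Theorem~\ref{lcur}. The Gaussian likelihood structure converts the $\ell$-packing into a KL-separation statement, and Fano's lemma then yields the desired lower bound; the additional constraint that the hypotheses lie inside the sup-norm ball $N(\phi_0)$ will dictate the exact choice of the packing parameter $m$.

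The concrete construction I would use is the following. For an integer $m$ to be chosen, apply the construction in the proof of Theorem~\ref{lcur}: define the piecewise linear interpolants $\alpha_1, \ldots, \alpha_m$ of $\phi_0$ at the equispaced grid $t_i = a + (b-a)i/m$, and for each $\tau \in \{0,1\}^m$ put $\phi_\tau(x) := \max\bigl(\phi_0(x), \max_{i:\tau_i=1}\alpha_i(x)\bigr)$. From~\eqref{jabil} we obtain the sup-norm bound $\sup_x |\phi_\tau(x) - \phi_0(x)| \leq (b-a)^2 \kappa_2/(8 m^2)$, and from Lemma~\ref{buj} combined with the Varshamov--Gilbert lemma we obtain a set $W \subseteq \{0,1\}^m$ with $|W| \geq e^{m/8}$ and $\ell^2(\phi_\tau,\phi_{\tau'}) \geq \kappa_1^2 (b-a)^5/(16384\, c_2 m^4)$ for all distinct $\tau,\tau' \in W$, provided $n \geq 4 m c_2/(b-a)$. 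I would now choose the smallest $m$ for which the sup-norm bound is at most $(\kappa_2 c_1^2/32)^{1/5}(\sigma^2/n)^{2/5}$, which is exactly the radius of $N(\phi_0)$; this makes $\phi_\tau \in N(\phi_0)$ automatic for every $\tau$. This forces $m$ to be of order $(b-a)(n/\sigma^2)^{1/5}$ times explicit constants in $\kappa_2$ and $c_1$, and the hypothesis $n^2 \geq (2c_2)^{5/2}\kappa_2/(\sigma\sqrt{c_1})$ is tailored precisely to enforce $n \geq 4mc_2/(b-a)$ for this $m$. Plugging this $m$ back into the separation lower bound reproduces $\frac{\kappa_1^2}{4096 c_2}(\sqrt{c_1}/\kappa_2)^{8/5}(b-a)(\sigma^2/n)^{4/5}$ up to an absolute constant.

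It remains to turn this packing into a bona fide minimax lower bound. Under the Gaussian noise assumption, the Kullback--Leibler divergence between the joint laws $\P_{\phi_\tau}$ and $\P_{\phi_{\tau'}}$ equals $n \ell^2(\phi_\tau,\phi_{\tau'})/(2\sigma^2)$, and since both $\phi_\tau$ and $\phi_{\tau'}$ lie within sup-norm distance $(\kappa_2 c_1^2/32)^{1/5}(\sigma^2/n)^{2/5}$ of $\phi_0$, one has $\ell^2(\phi_\tau,\phi_{\tau'}) \leq 4 (\kappa_2 c_1^2/32)^{2/5}(\sigma^2/n)^{4/5}$. Thus the maximum pairwise KL divergence is of order $(n/\sigma^2)^{1/5}$, the same order as $\log|W| \geq m/8$. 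Fano's lemma (in the form of, e.g., Tsybakov's Theorem 2.5) then delivers $\mix_n(\phi_0) \geq \tfrac14 \min_{\tau \neq \tau'} \ell^2(\phi_\tau,\phi_{\tau'})$, from which~\eqref{ram.eq} follows by tracking constants. The substantive mathematical work is already contained in the proof of Theorem~\ref{lcur}; the main obstacle here is calibration: $m$ must be simultaneously large enough to place every $\phi_\tau$ in $N(\phi_0)$ and to satisfy $n \geq 4mc_2/(b-a)$, yet small enough that the Fano ratio of maximum KL to $\log|W|$ is dominated by a constant strictly below $1$ and, together with the leading constants that Fano produces, generates exactly the factor $1/(4096\, c_2)$ appearing in~\eqref{ram.eq}.
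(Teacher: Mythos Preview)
Your proposal identifies the correct hypothesis family (the $\phi_\tau$ from the proof of Theorem~\ref{lcur}) and the correct choice of $m$, but the paper uses Assouad's inequality~\eqref{ass.eq}, not Fano. The distinction matters here. Assouad requires only the total-variation distance between \emph{Hamming-neighbour} pairs $\phi_\tau, \phi_{\tau'}$ with $\ham(\tau,\tau')=1$, and the paper controls this via the upper bound~\eqref{iup} of Lemma~\ref{buj}, obtaining $\ell^2(\phi_\tau,\phi_{\tau'}) \leq \kappa_2^2(b-a)^5/(32 c_1 m^5)$ for neighbours. With the paper's $m$ this equals exactly $\sigma^2/n$, Pinsker gives $\|\P_{\phi_\tau}-\P_{\phi_{\tau'}}\|_{TV}\leq 1/2$, and the Assouad product structure then delivers the constant $\kappa_1^2/(4096 c_2)$ cleanly.

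Your Fano route, by contrast, needs a KL bound over \emph{all} pairs in $W$, and the pairs produced by Varshamov--Gilbert have Hamming distance up to $m$, not $1$. The resulting KL is a factor of order $m$ larger than in the Assouad argument; with the same $m$, the ratio of maximal (or average) KL to $\log|W|$ is bounded below by a constant exceeding $1$ (roughly $4$ even using the per-coordinate bound from~\eqref{iup}, and about $8c_1/(b-a)$ with your cruder sup-norm bound), so the hypothesis of Tsybakov's Theorem~2.5 fails. Inflating $m$ restores the Fano condition and does yield the correct rate $n^{-4/5}$, but the separation shrinks as $m^{-4}$ and the Markov step from probability to expectation loses a further factor, so you will not recover the explicit constant in~\eqref{ram.eq}. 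The paper's choice of Assouad is thus not incidental: it is what allows a single value of $m$ to simultaneously place every $\phi_\tau$ in $N(\phi_0)$, satisfy $n \geq 4mc_2/(b-a)$, and calibrate the information bound exactly.
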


Prototypical examples of functions in $\cur$ include power functions
$x^k$ for $k \geq 2$ and the above theorem implies that every
estimator has rate at least $n^{-4/5}$ for all these functions. Note
that the LSE has the rate $n^{-4/5}$ up to logarithmic factors of $n$ for all functions $\phi_0$. In particular, the LSE is rate optimal (up to logarithmic factors) for all functions in $\cur$. 

Prominent examples of functions not in the class $\cur$ include the
piecewise affine convex functions. As shown in Theorem~\ref{adap},
faster rates are possible for these functions. Essentially, the LSE
converges at the parametric rate (up to logarithmic factors) for these
functions.  

The hardest functions to estimate under the global risk are
therefore smooth convex functions. This is in sharp contrast to the
standpoint of pointwise risk estimation where, for example, cusps in
the function $f(x) = |x|$ are the hardest to estimate. In fact, one
would expect a rate of $n^{-2/3}$ near such cusp points
(see~\citet{CaiLowFwork} for a detailed study of pointwise estimation
although they work with estimators that are different from the
LSE). However, for global estimation, the region over which one gets
such  slower rates is small enough to not effect the overall
near-parametric rate for piecewise affine convex functions. 
  
Our proof of Theorem~\ref{ram} is based on the application of
Assouad's lemma, the following version of which is a consequence of
Lemma 24.3 of~\citet[pp.~347]{vaart98book}. We start by 
introducing some notation. Let $\P_{\phi}$ denote the joint
distribution of the observations $(x_1, Y_1), \dots, (x_n, Y_n)$ when
the true convex function equals $\phi$. For two probability 
measures $P$ and $Q$ having densities $p$ and $q$ with 
respect to a common measure $\mu$, the total variation distance,
$\|P-Q\|_{TV}$, is defined as $\int (|p-q|/2) d\mu$ and the
Kullback-Leibler divergence, $D(P\|Q)$, is defined as $\int p \log
(p/q) d\mu$. Pinsker's inequality asserts
\begin{equation}\label{pins}
  D(P \|Q) \geq 2 \|P - Q \|_{TV}^2 
\end{equation}
for all probability measures $P$ and $Q$. 
\begin{lemma}[Assouad]
Let $m$ be a positive integer and suppose that, for each $\tau \in
\{0, 1\}^m$, there is an associated convex function $\phi_{\tau}$ in
$N(\phi_0)$. Then the following inequality holds:  
  \begin{equation}\label{ass.eq}
    \mix_n(\phi_0) \geq \frac{m}{8} \min_{\tau \neq \tau'}
    \frac{\ell^2(\phi_{\tau}, \phi_{\tau'})}{\ham(\tau, \tau')}
    \min_{\ham (\tau, \tau') = 1} \left(1 - \|\P_{\phi_{\tau}} -
      \P_{\phi_{\tau'}}\|_{TV} \right), 
  \end{equation}
where $\ham(\tau, \tau') := \sum_{i} \{\tau_i \neq \tau'_i\}$. 
\end{lemma}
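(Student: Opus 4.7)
The plan is to run the classical Assouad argument in three steps: (i) pass from the supremum over $N(\phi_0)$ to the average risk over the $2^m$ hypotheses, (ii) convert any estimator into a Hamming-valued test by nearest-point projection (losing a factor $1/4$) and decompose coordinate-by-coordinate, and (iii) bound each coordinate error using the standard two-point testing lemma for total variation. Since each $\phi_\tau \in N(\phi_0)$, the definition of $\mix_n(\phi_0)$ immediately yields
\begin{equation*}
\mix_n(\phi_0) \geq \inf_{\hat\phi} \frac{1}{2^m}\sum_{\tau \in \{0,1\}^m} \E_{\phi_\tau} \ell^2(\phi_\tau, \hat\phi),
\end{equation*}
so the remaining task is to lower bound this Bayes risk by the right-hand side of \eqref{ass.eq} for every estimator $\hat\phi$.

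For the quantization step I would define $\hat\tau := \argmin_{\tau} \ell(\phi_\tau, \hat\phi)$ (breaking ties arbitrarily). The triangle inequality together with the minimality of $\hat\tau$ gives $\ell(\phi_\tau, \phi_{\hat\tau}) \leq \ell(\phi_\tau, \hat\phi) + \ell(\hat\phi, \phi_{\hat\tau}) \leq 2\ell(\phi_\tau, \hat\phi)$, hence $\ell^2(\phi_\tau, \hat\phi) \geq \tfrac{1}{4}\ell^2(\phi_\tau, \phi_{\hat\tau})$. Writing $D := \min_{\tau \neq \tau'} \ell^2(\phi_\tau, \phi_{\tau'})/\ham(\tau,\tau')$, the definition of $D$ gives $\ell^2(\phi_\tau, \phi_{\hat\tau}) \geq D\,\ham(\tau,\hat\tau) = D\sum_{i=1}^m \mathbf{1}\{\hat\tau_i \neq \tau_i\}$. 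Taking expectations under $\P_{\phi_\tau}$, averaging over $\tau$, and swapping the order of summation yields
\begin{equation*}
\frac{1}{2^m}\sum_\tau \E_{\phi_\tau}\ell^2(\phi_\tau,\hat\phi) \geq \frac{D}{4}\sum_{i=1}^m \frac{1}{2^m}\sum_\tau \P_{\phi_\tau}(\hat\tau_i \neq \tau_i).
\end{equation*}

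For the last step I would, for each fixed $i$, group the $2^m$ summands into $2^{m-1}$ Hamming-neighbor pairs $(\tau, \tau^{(i)})$ differing only in coordinate $i$. The two-point testing lemma, which is a one-line consequence of $\|P - Q\|_{TV} = 1 - \int (p \wedge q)\,d\mu$ applied to any $\{0,1\}$-valued statistic $T$, asserts
\begin{equation*}
\P_{\phi_\tau}(T \neq \tau_i) + \P_{\phi_{\tau^{(i)}}}(T \neq \tau^{(i)}_i) \geq 1 - \|\P_{\phi_\tau} - \P_{\phi_{\tau^{(i)}}}\|_{TV};
\end{equation*}
applying it with $T = \hat\tau_i$ and summing over the pairs produces $\frac{1}{2^m}\sum_\tau \P_{\phi_\tau}(\hat\tau_i \neq \tau_i) \geq \tfrac{1}{2}\min_{\ham(\tau,\tau')=1}(1-\|\P_{\phi_\tau}-\P_{\phi_{\tau'}}\|_{TV})$. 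Summing over $i$ multiplies by $m$, and the previous factor $D/4$ combines with $1/2$ to give the $m/8$ stated in \eqref{ass.eq}. The only real subtlety is that $\hat\tau$ inherits randomization from $\hat\phi$; but the testing lemma is proved by minimizing over all randomized tests, so nothing changes. No convexity or geometry of $\C$ is used beyond the hypothesis $\{\phi_\tau\} \subset N(\phi_0)$, which is exactly the generality in which Assouad's inequality is useful.
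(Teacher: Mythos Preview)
Your argument is the standard, correct proof of Assouad's inequality: reduce the minimax risk to a Bayes risk over the hypercube, quantize the estimator to the nearest $\phi_\tau$ (losing the factor $1/4$), lower-bound $\ell^2(\phi_\tau,\phi_{\hat\tau})$ by $D\cdot\ham(\tau,\hat\tau)$, and finish each coordinate with the two-point Le Cam bound. All steps are sound, including the remark about randomized tests.

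The paper itself does not prove this lemma at all; it simply states it as ``a consequence of Lemma 24.3 of \citet[pp.~347]{vaart98book}'' and moves on. So there is no paper proof to compare with---you have supplied exactly the argument that the cited reference contains, and in the form needed here (loss $\ell^2$, parameter space $N(\phi_0)$).
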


\begin{proof}[Proof of Theorem~\ref{ram}]
 Fix $m \geq 1$ and consider the same construction $\{\phi_{\tau},
 \tau \in \{0,  1\}^m\}$ from the proof of Theorem~\ref{lcur}. We saw
 there that  
 \begin{equation}\label{ramy}
   \sup_{x \in [0, 1]} |\phi_{\tau}(x) - \phi_0(x)| \leq \frac{(b-a)^2
   \kappa_2}{8 m^2}
 \end{equation}
 and that 
 \begin{equation}\label{borr}
   \ell^2(\phi_{\tau}, \phi_{\tau'}) \geq \ham(\tau, \tau')
   \frac{\kappa_1^2 (b-a)^5}{4096 c_2 m^5}
 \end{equation}
 for every $\tau, \tau' \in \{0, 1\}^m$ provided $n \geq
 4mc_2/(b-a)$. Also, whenever $\ham(\tau, \tau') = 1$, it is clear
 that  
 \begin{equation*}
   \ell^2(\phi_{\tau}, \phi_{\tau'}) \leq \max_{1 \leq i \leq m}
   \ell^2(\phi_0, \max(\phi_0, \alpha_i)). 
 \end{equation*}
 We use Lemma~\ref{buj} to bound $\ell^2(\phi_0, \max(\phi_0,
 \alpha_i))$ from above. Specifically, we use inequality~\eqref{iup}
 with $a = t_{i-1}$ and $b = t_i$ to get 
 \begin{equation*}
   \ell^2(\phi_0, \max(\phi_0, \alpha_i)) \leq \frac{\kappa_2^2 (t_i -
     t_{i-1})^5}{32 c_1} = \frac{\kappa_2^2 (b-a)^5}{32 c_1 m^5}
 \end{equation*}
 provided $n \geq 4mc_1/(b-a)$. Thus under the assumption $n \geq
 4mc_2/(b-a)$,  we have~\eqref{borr} and also (note that $c_2 \geq
 c_1$) 
 \begin{equation*}
   \ell^2(\phi_{\tau}, \phi_{\tau'}) \leq \frac{\kappa_2^2 (b-a)^5}{32
     c_1 m^5} \qt{whenever $\ham(\tau, \tau') = 1$}. 
 \end{equation*}
We apply Assouad's lemma to these functions $\phi_{\tau}$. By
inequality~\eqref{pins}, we get  
\begin{equation*}
  \|\P_{\phi_{\tau}} - \P_{\phi_{\tau'}}\|^2_{TV} \leq \frac{1}{2}
  D(\P_{\phi_{\tau}}\|\P_{\phi_{\tau'}}).
\end{equation*}
By the Gaussian assumption and independence of the errors, the
Kullback-Leibler divergence $D(\P_{\phi_{\tau}}\| P_{\phi_{\tau'}})$
can be easily calculated to be $n \ell^2(\phi_{\tau},
\phi_{\tau'})/(2 \sigma)$. We therefore obtain
\begin{equation*}
  \|\P_{\phi_{\tau}} - \P_{\phi_{\tau'}}\|_{TV} \leq \frac{\sqrt{n}}{2
  \sigma} \ell(\phi_{\tau}, \phi_{\tau'}). 
\end{equation*}
Thus by the application of~\eqref{ass.eq}, we obtain the following 
lower bound for $\mix_n(\phi_0)$: 
\begin{equation}\label{ahos}
  \mix_n(\phi_0) \geq \frac{m}{8} \frac{\kappa_1^2 (b-a)^5}{4096 m^5
    c_2} \left(1 - \frac{\sqrt{n}\kappa_2}{2\sigma}
    \sqrt{\frac{(b-a)^5}{m^5 32 c_1}} \right)
\end{equation}
provided $\phi_{\tau} \in N(\phi_0)$ for each $\tau$. We make the
choice   
\begin{equation*}
  \frac{m}{b-a} := \left(\frac{\sqrt{n}\kappa_2}{\sigma \sqrt{32 c_1}}
  \right)^{2/5}.  
\end{equation*}
The inequality~\eqref{ramy} implies that $\phi_{\tau} \in
N(\phi_0)$. The inequality~\eqref{ram.eq} follows easily
from~\eqref{ahos} . The constraint $n \geq 4c_2m/(b-a)$ 
translates to 
\begin{equation*}
  n^2 \geq (2c_2)^{5/2} \kappa_2/(\sigma \sqrt{c_1}). 
\end{equation*}
The proof is complete. 
\end{proof}

\section{Model misspecification}\label{Misspec}
In this section, we evaluate the performance of the convex LSE
$\hat{\phi}_{ls}$ in the case when the unknown regression function (to
be denoted by $f_0$) is not necessarily convex. Specifically, suppose
that $f_0$ is an unknown function on $[0, 1]$ that is not necessarily
convex. We consider observations $(x_1, Y_1), \dots, (x_n, Y_n)$ from
the model:  
\begin{equation*}
  Y_i = f_0(x_i) + \xi_i, \qt{for $i = 1, \dots, n,$}
\end{equation*}
where $x_1< \dots < x_n$ are fixed design points in $[0, 1]$ and
$\xi_1, \dots, \xi_n$ are independent normal variables with zero mean
and variance $\sigma^2$. 

The convex LSE $\hat{\phi}_{ls}$ is defined in the same way as before as any convex function that minimizes the sum of squares criterion. Since the true function $f_0$ is not necessarily convex, it turns out that the LSE is really estimating the convex projections of $f_0$. Any convex function $\phi_0$ on $[0, 1]$ that
minimizes $\ell^2(f_0, \phi)$ over $\phi \in \C$ is a convex
projection of $f_0$ i.e., 
\begin{equation*}
  \phi_0 \in  \argmin_{\psi \in \C} \sum_{i=1}^n \left(f_0(x_i) -
  \phi(x_i)\right)^2.
\end{equation*}
Convex projections are not unique. However, because $\{(\phi(x_1), 
\dots, \phi(x_n)): \phi \in \C\}$ is a convex closed subset of  
$\R^n$, it follows (see, for example~\citet[Chapter 2]{StarkYang})
that the vector $(\phi_0(x_1), \dots, \phi_0(x_n))$  
is unique for every convex projection $\phi_0$ and, moreover, we have
the inequality:  
\begin{equation}\label{proji}
  \ell^2(f_0, \phi) \geq \ell^2(f_0, \phi_0) + \ell^2(\phi_0, \phi)
  \qt{for every $\phi \in \C$}. 
\end{equation}
The following is the main result of this section. It is the  exact
analogue of Theorem~\ref{twinf} for the case of model
misspecification. 
\begin{thm}\label{mwinf}
  Let $\phi_0$ denote any convex projection of $f_0$ and let $R :=
  \max(1, \lin(\phi_0))$. There exists a positive constant  
  $C$ depending only on the ratio $c_1/c_2$ such that 
  \begin{equation*}
    \E_{f_0} \ell^2(\hat{\phi}_{ls}, \phi_0)  \leq C \left(\log
      \frac{en}{2c_1} \right)^{5/4} \min \left[\left(\frac{\sigma^2
          \sqrt{R}}{n} \right)^{4/5}, \inf_{\alpha \in \Ps}
      \left(\ell^2(\phi_0, \alpha) + \frac{\sigma^2 k^{5/4}(\alpha)}{n}
      \right) \right] 
  \end{equation*}
  provided 
  \begin{equation*}
    n \geq C \frac{\sigma^2}{R^2} \left(\log
        \frac{en}{2c_1} \right)^{5/4}.  
  \end{equation*}
\end{thm}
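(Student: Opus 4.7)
The plan is to reduce Theorem~\ref{mwinf} to the already-proved well-specified results (Theorems~\ref{rig1d} and~\ref{adap}) by showing that the basic inequality of the LSE takes the same form in the misspecified setting, with $\phi_0$ replaced by a convex projection of $f_0$. The metric entropy bound Theorem~\ref{lball} for $S(\phi_0,r)$ is a purely geometric statement about the space $\C$ of convex functions and does not involve $f_0$, so once the basic inequality matches, the van de Geer chaining argument of Theorem~\ref{vandy} will port over unchanged and the proofs of Section~\ref{prbs} will go through verbatim.

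First I would write down the defining inequality of $\hat{\phi}_{ls}$ applied to the candidate $\phi_0$, namely
\begin{equation*}
  \sum_{i=1}^n \bigl(Y_i - \hat{\phi}_{ls}(x_i)\bigr)^2
  \leq \sum_{i=1}^n \bigl(Y_i - \phi_0(x_i)\bigr)^2,
\end{equation*}
substitute $Y_i = f_0(x_i) + \xi_i$ and expand, which gives
\begin{equation*}
  n\,\ell^2(f_0,\hat{\phi}_{ls}) - n\,\ell^2(f_0,\phi_0)
  \leq 2 \sum_{i=1}^n \xi_i \bigl(\hat{\phi}_{ls}(x_i) - \phi_0(x_i)\bigr).
\end{equation*}
Next I would invoke the convex projection inequality~\eqref{proji}, applied to $\phi = \hat{\phi}_{ls}$, to obtain $n\,\ell^2(f_0,\hat{\phi}_{ls}) \geq n\,\ell^2(f_0,\phi_0) + n\,\ell^2(\phi_0,\hat{\phi}_{ls})$, and combine the two displays to arrive at the clean bound
\begin{equation*}
  n\,\ell^2(\phi_0,\hat{\phi}_{ls}) \leq 2\sum_{i=1}^n \xi_i \bigl(\hat{\phi}_{ls}(x_i) - \phi_0(x_i)\bigr).
\end{equation*}
This is identical in form to the basic inequality used in the proof of Theorem~\ref{vandy} for the well-specified setting, the only difference being that the noise vector is now measured against the convex projection $\phi_0$ rather than a true convex regression function. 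Since van de Geer's argument only uses this basic inequality together with subgaussianity of the $\xi_i$ and the metric entropy of $S(\phi_0,r)$, the conclusion of Theorem~\ref{vandy} remains valid with $\P_{\phi_0}$ replaced by $\P_{f_0}$.

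With the van de Geer tail bound in hand, the arguments of Section~\ref{GblRates} and Section~\ref{Adap} carry over word for word: they plug the entropy bound~\eqref{eib.eq} into the integral $\int_0^r \sqrt{\log M(\epsilon,S(\phi_0,r),\ell)}\,d\epsilon$, verify that $H(r)/r^2$ is decreasing, and integrate the resulting tail bound against $\delta$. The main obstacle, which is really the only place care is needed, is the first step: one must be sure that the projection inequality~\eqref{proji} really holds in the discrete $\ell^2$ sense used here. This is true because $\{(\phi(x_1),\dots,\phi(x_n)) : \phi \in \C\}$ is a closed convex cone in $\R^n$, so the Euclidean projection onto it satisfies the standard variational inequality, which is precisely~\eqref{proji}. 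Beyond this, no new ideas are required, and renaming constants appropriately delivers Theorem~\ref{mwinf}.
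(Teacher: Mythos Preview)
Your proposal is correct and is precisely the approach the paper takes: it establishes the misspecified basic inequality via the LSE defining property together with the projection inequality~\eqref{proji}, states the resulting analogue of Theorem~\ref{vandy} (which the paper calls Theorem~\ref{genvan}), and then notes that the proofs of Theorems~\ref{rig1d} and~\ref{adap} carry over verbatim with $\E_{\phi_0}$ replaced by $\E_{f_0}$. Your write-up is in fact more explicit than the paper's, which omits the proof of Theorem~\ref{mwinf} and only sketches Theorem~\ref{genvan}.
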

We omit the proof of this theorem because it is similar to the proof
of Theorem~\ref{twinf}. It is based on the metric entropy results from
Section~\ref{MetricEnt} and the following result from the literature
on the risk behavior of ERMs. 
\begin{thm}\label{genvan}
Let $\phi_0$ denote any convex projection of $f_0$. Suppose $H$ is a
function on $(0, \infty)$ such that   
\begin{equation*}
    H(r) \geq \int_0^r \sqrt{\log M(\epsilon, S(\phi_0, r))} d\epsilon
    \qt{for every $r > 0$}
\end{equation*}
and such that $H(r)/r^2$ is decreasing on $(0, \infty)$. Then there
exists a universal constant $C$ such that  
\begin{equation*}
    \P_{f_0} \left(\ell^2(\hat{\phi}_{ls}, \phi_0) > \delta \right)
    \leq C \sum_{s \geq 0} \exp \left(-\frac{n 2^{2s} \delta}{C^2
        \sigma^2} \right) 
\end{equation*}
for every $\delta > 0$ satisfying $\sqrt{n} \delta \geq C \sigma H(\sqrt{\delta})$. 
\end{thm}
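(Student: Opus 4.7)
The plan is to reduce the proof to the same empirical process argument underlying Theorem~\ref{vandy}, with the projection inequality~\eqref{proji} as the only additional ingredient. The first step is to derive a ``basic inequality'' identical in form to the one used in the well-specified setting. Since $\hat{\phi}_{ls}$ minimizes the least squares criterion over $\C$ and $\phi_0 \in \C$, one has $\sum_i (Y_i - \hat{\phi}_{ls}(x_i))^2 \le \sum_i (Y_i - \phi_0(x_i))^2$. Substituting $Y_i = f_0(x_i) + \xi_i$ and rearranging gives
\begin{equation*}
\ell^2(f_0, \hat{\phi}_{ls}) - \ell^2(f_0, \phi_0) \le \frac{2}{n} \sum_{i=1}^n \xi_i \bigl(\hat{\phi}_{ls}(x_i) - \phi_0(x_i)\bigr).
\end{equation*}
Applying~\eqref{proji} with $\phi = \hat{\phi}_{ls}$, the left-hand side is bounded below by $\ell^2(\phi_0, \hat{\phi}_{ls})$, so
\begin{equation*}
\ell^2(\phi_0, \hat{\phi}_{ls}) \le \frac{2}{n} \sum_{i=1}^n \xi_i \bigl(\hat{\phi}_{ls}(x_i) - \phi_0(x_i)\bigr).
\end{equation*}
This is precisely the starting point for the proof of Theorem~\ref{vandy} in the well-specified case.

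The second step is to apply the standard peeling argument to the centered empirical process $\phi \mapsto n^{-1} \sum_{i} \xi_i(\phi(x_i) - \phi_0(x_i))$, indexed by $\phi \in S(\phi_0, r)$. One decomposes the event $\{\ell^2(\phi_0, \hat{\phi}_{ls}) > \delta\}$ into geometric shells $\{2^s \sqrt{\delta} < \ell(\phi_0, \hat{\phi}_{ls}) \le 2^{s+1}\sqrt{\delta}\}$ for $s \ge 0$, and on each shell bounds the probability by the tail of the supremum of the empirical process over $S(\phi_0, 2^{s+1}\sqrt{\delta})$. Because the $\xi_i$ are Gaussian with variance $\sigma^2$, this process has sub-Gaussian increments of scale $\sigma/\sqrt{n}$ in the metric $\ell$, and Dudley's chaining yields an expected supremum of order $(\sigma/\sqrt{n}) \int_0^{2^{s+1}\sqrt{\delta}} \sqrt{\log M(\epsilon, S(\phi_0, 2^{s+1}\sqrt{\delta}), \ell)}\, d\epsilon$, which by hypothesis is at most $(\sigma/\sqrt{n}) H(2^{s+1}\sqrt{\delta})$.

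The third step is to collect the tail bounds across shells. The assumption that $H(r)/r^2$ is decreasing together with $\sqrt{n}\delta \ge C\sigma H(\sqrt{\delta})$ forces $\sqrt{n}(2^{2s}\delta) \ge C\sigma H(2^{s+1}\sqrt{\delta})$ for every $s \ge 0$, so a Gaussian deviation inequality on each shell produces a term of the form $\exp(-n 2^{2s}\delta/(C^2 \sigma^2))$; a union bound over $s$ then yields the stated conclusion.

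The main obstacle is essentially absent: the entire content of Theorem~\ref{genvan}, as compared with Theorem~\ref{vandy}, is that the projection inequality~\eqref{proji} (which holds because $\{(\phi(x_1),\dots,\phi(x_n)):\phi\in\C\}$ is a closed convex cone in $\R^n$) allows the proof to bypass any contribution from the bias term $\ell^2(f_0,\phi_0)$ and proceed as in the well-specified case. The only mildly delicate point is verifying that the basic inequality above is genuinely first-order in the noise — i.e.\ that the projection inequality eliminates the bias term cleanly — but this is immediate once~\eqref{proji} is in hand. All the heavy chaining and peeling machinery is inherited directly from the proof of Theorem~\ref{vandy}.
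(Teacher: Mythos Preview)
Your proposal is correct and follows essentially the same approach as the paper's own proof: derive the basic inequality $\ell^2(\phi_0,\hat{\phi}_{ls}) \le \frac{2}{n}\sum_i \xi_i(\hat{\phi}_{ls}(x_i)-\phi_0(x_i))$ from the least squares property together with the projection inequality~\eqref{proji}, and then defer to the peeling/chaining argument of Theorem~\ref{vandy} (Van de Geer's Theorem~9.1). Your sketch of the peeling step is in fact more detailed than what the paper provides, but the logic is the same; the only minor imprecision is that~\eqref{proji} requires $\{(\phi(x_1),\dots,\phi(x_n)):\phi\in\C\}$ to be a closed convex \emph{set}, not specifically a cone.
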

This result is very similar to Theorem~\ref{vandy}. Its proof proceeds
in the same way as the proof of Theorem~\ref{vandy} (see~\citet[Proof
of Theorem 9.1]{VandegeerBook}). We provide below a sketch of its
proof for the convenience of the reader. 
\begin{proof}[Proof of Theorem~\ref{genvan}]
Because $\phi_0$ is convex, we have, by the definition of
$\hat{\phi}_{ls}$, that
\begin{equation*}
\frac{1}{n} \sum_{i=1}^n \left(Y_i - \hat{\phi}_{ls}(x_i) \right)^2
\leq \frac{1}{n}  \sum_{i=1}^n \left(Y_i - \phi_0(x_i) \right)^2.
\end{equation*}
Writing $Y_i = f_0(x_i) + \xi_i$  and simplifying the above
expression, we get
\begin{equation*}
  \ell^2(f_0, \hat{\phi}_{ls}) - \ell^2(f_0, \phi_0) \leq \frac{2}{n}
  \sum_{i=1}^n \xi_i \left(\hat{\phi}_{ls}(x_i) - \phi_0(x_i) \right).
\end{equation*}
Inequality~\eqref{proji} applied with $\phi = \hat{\phi}_{ls}$ gives
\begin{equation*}
  \ell^2(\hat{\phi}_{ls}, \phi_0) \leq \ell^2(f_0, \hat{\phi}_{ls}) -
  \ell^2(f_0, \phi_0).
\end{equation*}
Combining the above two inequalities, we obtain
\begin{equation*}
    \ell^2( \hat{\phi}_{ls}, \phi_0) \leq \frac{2}{n}
  \sum_{i=1}^n \xi_i \left(\hat{\phi}_{ls}(x_i) - \phi_0(x_i)
  \right). 
\end{equation*}
This is of the same form as the  ``basic inequality'' of~\citet[pp.~148]{VandegeerBook}. From here, the proof proceeds just as the proof
of Theorem 9.1 in~\citet{VandegeerBook}. 
\end{proof}
Theorem~\ref{mwinf} shows that one gets
adaptation in the misspecified case provided $f_0$ has a convex
projection that is well-approximable by a piecewise affine convex
function with not too many pieces. An illuminating example of this
occurs when $f_0$ is a  concave function. In this case, we show in
Lemma~\ref{concov} (stated and proved in Section~\ref{AuxRes}) that
$\phi_0$ can be taken to be an affine function, i.e., $\phi_0 \in
\Ps_1$. As a result, it follows that if $f_0$ is concave, then the
risk of $\hat{\phi}_{ls}$ measured from any convex projection of $f_0$
is bounded from above by the parametric rate up to a logarithmic factor
of $n$.  

{\bf Acknowledgements: } The authors would like to thank Aritra Guha, Sasha Tsybakov, a referee and an Associate Editor for their helpful comments.

\appendix
\section{Some auxiliary results}\label{AuxRes}
\begin{lemma}\label{buj}
  Fix $\phi_0 \in \C$ and suppose there exists a subinterval $[a, b]$
  of $[0, 1]$ such that $\phi_0$ is twice differentiable on $[a,
  b]$. Let $\alpha$ denote the linear interpolant of the points $(a,
  \phi_0(a))$ and $(b, \phi_0(b))$ i.e., 
  \begin{equation*}
    \alpha(x) := \phi_0(a) + \frac{\phi_0(b) - \phi_0(a)}{b - a} (x -
    a) \qt{for $x \in [0, 1]$}. 
  \end{equation*}
  \begin{enumerate}
  \item If $\phi_0''(x) \geq \kappa_1$ for all $x \in [a, b]$, then 
    \begin{equation}\label{ilo}
      \ell^2(\phi_0, \max(\phi_0, \alpha)) \geq
      \frac{\kappa_1^2(b-a)^5}{4096 c_2} \qt{when $n \geq
        4c_2/(b-a)$}. 
    \end{equation}
   \item If $\phi_0''(x) \leq \kappa_2$ for all $x \in [a, b]$, then 
     \begin{equation}\label{iup}
       \ell^2(\phi_0, \max(\phi_0, \alpha)) \leq
      \frac{\kappa_2^2(b-a)^5}{32 c_1} \qt{when $n \geq
        4c_1/(b-a)$}. 
     \end{equation}
  \end{enumerate}
\end{lemma}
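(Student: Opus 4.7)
The plan is to reduce $\ell^2(\phi_0,\max(\phi_0,\alpha))$ to a sum of squared pointwise interpolation errors over the design points inside $[a,b]$, and then control that sum by combining the standard linear-interpolation error formula with the spacing condition~\eqref{eq:DesignPts}. First I would note that since $\phi_0$ is convex on $[0,1]$, the chord $\alpha$ satisfies $\alpha \geq \phi_0$ on $[a,b]$ while its linear extension satisfies $\alpha \leq \phi_0$ outside $[a,b]$. Thus $\max(\phi_0,\alpha) = \alpha$ on $[a,b]$ and $\max(\phi_0,\alpha) = \phi_0$ outside, which immediately gives
\[
\ell^2(\phi_0,\max(\phi_0,\alpha)) = \frac{1}{n}\sum_{i:\, x_i \in [a,b]} \left(\alpha(x_i) - \phi_0(x_i)\right)^2.
\]

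Next I would invoke the standard error formula for linear interpolation (see, e.g., Chapter~3 of~\citet{Atkinson88}): for each $x \in [a,b]$ there exists $t_x \in [a,b]$ such that $\alpha(x) - \phi_0(x) = \tfrac{1}{2}\phi_0''(t_x)(x-a)(b-x)$. Under the hypothesis $\kappa_1 \leq \phi_0''(x) \leq \kappa_2$ this yields the pointwise sandwich $\tfrac{\kappa_1}{2}(x-a)(b-x) \leq \alpha(x) - \phi_0(x) \leq \tfrac{\kappa_2}{2}(x-a)(b-x)$, which controls each summand $(\alpha(x_i) - \phi_0(x_i))^2$ from below (for~\eqref{ilo}) and from above (for~\eqref{iup}).

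For~\eqref{ilo} I would restrict the sum to the middle half $I^\ast := [a + (b-a)/4,\, b - (b-a)/4]$, on which $(x-a)(b-x) \geq (b-a)^2/16$, so that $(\alpha(x_i) - \phi_0(x_i))^2 \geq \kappa_1^2(b-a)^4/1024$ for every $x_i \in I^\ast$. The upper spacing bound $x_i - x_{i-1} \leq c_2/n$ forces $I^\ast$ to contain at least $n(b-a)/(2c_2) - 1$ design points, and the hypothesis $n \geq 4c_2/(b-a)$ makes this at least $n(b-a)/(4c_2)$. Multiplying by $1/n$ yields exactly $\kappa_1^2(b-a)^5/(4096\, c_2)$.

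For~\eqref{iup} I would use the crude uniform bound $(x-a)(b-x) \leq (b-a)^2/4$, so that $(\alpha(x_i) - \phi_0(x_i))^2 \leq \kappa_2^2(b-a)^4/64$ for every $x_i \in [a,b]$. The lower spacing bound $x_i - x_{i-1} \geq c_1/n$ gives at most $1 + n(b-a)/c_1$ design points in $[a,b]$, which under $n \geq 4c_1/(b-a)$ is at most $(5/4)\,n(b-a)/c_1$; this produces $\ell^2 \leq 5\kappa_2^2(b-a)^5/(256\, c_1) \leq \kappa_2^2(b-a)^5/(32\, c_1)$. No step is genuinely delicate; the only thing to watch is that the two lower bounds $n \geq 4c_1/(b-a)$ and $n \geq 4c_2/(b-a)$ are precisely what converts the raw spacing bounds into clean point counts matching the stated constants, with enough slack (e.g.\ $5/256$ versus $8/256$) to absorb the loss from the crude uniform estimates.
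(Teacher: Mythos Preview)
Your proposal is correct and follows essentially the same route as the paper's proof: the same convexity-based reduction to a sum over $x_i\in[a,b]$, the same linear-interpolation remainder formula, the same restriction to the middle half $[(3a+b)/4,(a+3b)/4]$ for~\eqref{ilo}, and the same crude bound $(x-a)(b-x)\le(b-a)^2/4$ for~\eqref{iup}. The only cosmetic difference is that for the upper bound the paper uses the looser point count $1+n(b-a)/c_1\le 2n(b-a)/c_1$ (yielding the constant $1/32$ exactly) whereas you use the sharper $5/4$ factor and then absorb the slack; both are valid, and the paper packages the point-counting step as a separate auxiliary lemma (Lemma~\ref{subm}) rather than arguing it inline.
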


\begin{proof}[Proof of Lemma~\ref{buj}]
    By convexity of $\phi_0$, it is obvious that $\alpha(x) \geq
    \phi_0(x)$ for $x \in [a, b]$ and $\alpha(x) \leq \phi_0(x)$ for
    $x \notin [a, b]$. We therefore have
    \begin{equation}\label{lacru}
      \ell^2(\phi_0, \max(\phi_0, \alpha)) = \frac{1}{n} \sum_{i=1}^n
      \left(\alpha(x_i) - \phi_0(x_i) \right)^2 I\left\{x_i \in [a, b]
      \right\},
    \end{equation}
    where $I$ denotes the indicator function. By standard error
    estimates for linear interpolation, for every $x \in [a, b]$,
    there exists a point $t_x \in [a, b]$ for which  
    \begin{equation}\label{atki}
      \left|\phi_0(x) - \alpha(x) \right| = (x-a)(b-x)
      \frac{\phi_0''(t_x)}{2}. 
    \end{equation}
    Let us first prove~\eqref{ilo}. By~\eqref{atki} and the assumption
    $\phi_0''(x) \geq \kappa_1$ for $x \in [a, b]$, we have   
    \begin{equation*}
      |\phi_0(x) - \alpha(x)| \geq \frac{(x-a)(b-x) \kappa_1}{2} \qt{for all
        $x \in [a, b]$}. 
    \end{equation*}
    Thus, from~\eqref{lacru}, we get 
   \begin{align*}
      \ell^2(\phi_0, \max(\phi_0,\alpha)) &\geq \frac{\kappa^2_1}{4n}
      \sum_{i=1}^n (x_i - a)^2 (b - x_i)^2 I\left\{x_i \in [a, b]
      \right\} \\
&\geq \frac{\kappa^2_1}{4n} \sum_{i=1}^n (x_i - a)^2 (b - x_i)^2
I\left\{x_i \in [(3a+b)/4, (a+3b)/4]   \right\}. 
   \end{align*}
  Clearly $(x-a)(b-x) \geq (b-a)^2/16$ for every $x \in [(3a+b)/4,
  (a+3b)/4]$ and hence, 
  \begin{equation*}
    \ell^2(\phi_0, \max(\phi_0, \alpha)) \geq \frac{\kappa_1^2}{1024}
    \frac{(b-a)^4}{n} \sum_{i=1}^n I \left\{x_i \in [(3a+b)/4, (a+3b)/4]
    \right\}.
  \end{equation*} 
To get a lower bound on the number of points $x_1, \dots, x_n$ that
are contained in the interval $[(3a+b)/4, (a+3b)/4]$, we use
Lemma~\ref{subm} which gives  
  \begin{equation*}
    \ell^2(\phi_0, \max(\phi_0, \alpha)) \geq \frac{\kappa_1^2}{1024}
    \frac{(b-a)^4}{n} \left(\frac{n(b-a)}{2c_2} - 1 \right). 
  \end{equation*}
The condition $n \geq 4c_2/(b-a)$ now implies that 
  \begin{equation*}
    \frac{n(b-a)}{2c_2} - 1  \geq \frac{n(b-a)}{4c_2} 
  \end{equation*}
which completes the proof of~\eqref{ilo}. We now turn to the proof
of~\eqref{iup}. By~\eqref{atki} and the assumption $\phi_0''(x) \leq
\kappa_2$ for $x \in [a, b]$, we have 
 \begin{equation*}
|\phi_0(x) - \alpha(x)| \leq (x - a)(b-x) \frac{\kappa_2}{2} \qt{for
  all $x \in [a, b]$}. 
 \end{equation*}
Thus from~\eqref{lacru}, we write
  \begin{equation*}
\ell^2(\phi_0, \max(\phi_0 , \alpha)) \leq \frac{\kappa^2_2}{4n}
      \sum_{i=1}^n (x_i - a)^2 (b - x_i)^2 I\left\{x_i \in [a, b]
      \right\} .
  \end{equation*}
  Because $(x-a)(b-x) \leq (b-a)^2/4$ for all $x \in [a, b]$, we
  obtain 
  \begin{equation*}
    \ell^2(\phi_0, \max(\phi_0, \alpha)) \leq \frac{\kappa_2^2}{64}
    \frac{(b-a)^4}{n} \sum_{i=1}^n I\left\{x_i \in [a, b] \right\}. 
  \end{equation*}
To obtain an upper bound on the number of points $x_1, \dots, x_n$
that are contained in $[a, b]$, we again use Lemma~\ref{subm} to get 
  \begin{equation*}
    \ell^2(\phi_0, \max(\phi_0, \alpha)) \leq \frac{\kappa_2^2}{64}
    \frac{(b-a)^4}{n} \left(\frac{n(b-a)}{c_1} + 1 \right)
  \end{equation*}
When $n \geq 4c_1/(b-a)$, we have 
  \begin{equation*}
    \frac{n(b-a)}{c_1} + 1 \leq \frac{2n(b-a)}{c_1}
  \end{equation*}
  and this completes the proof. 
  \end{proof}

\begin{lemma}\label{subm}
  Let $x_1 < \dots < x_n$ be fixed points in $[0, 1]$ satisfying $c_1
  \leq n(x_i - x_{i-1}) \leq c_2$ for all $2 \leq i \leq n$. Let $[a,
  b]$ be a subinterval of $[0, 1]$ that contains $m$ of the $n$ real
  numbers $x_1, \dots, x_n$. Then 
  \begin{equation}\label{subm.eq}
\frac{n(b-a)}{c_2} - 1 \leq   m \leq \frac{n(b-a)}{c_1} + 1. 
  \end{equation}
\end{lemma}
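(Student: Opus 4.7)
The lemma is a direct counting estimate, and I would prove it by identifying the design points contained in $[a,b]$ and telescoping the spacing bounds.

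The first observation is that, because $x_1 < x_2 < \cdots < x_n$ is a monotone sequence, the subset of design points lying in the interval $[a,b]$ is automatically a block of consecutive indices. Indeed, if $x_{i_1}$ and $x_{i_m}$ denote respectively the smallest and largest $x_i$ in $[a,b]$ (assuming $m \geq 1$), then every $x_j$ with $i_1 \leq j \leq i_m$ satisfies $a \leq x_{i_1} \leq x_j \leq x_{i_m} \leq b$, so $m = i_m - i_1 + 1$. This is the structural fact that allows a clean count.

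For the upper bound in~\eqref{subm.eq}, I would telescope using the lower spacing $x_k - x_{k-1} \geq c_1/n$:
\[
b - a \;\geq\; x_{i_m} - x_{i_1} \;=\; \sum_{k=i_1+1}^{i_m} (x_k - x_{k-1}) \;\geq\; (i_m - i_1)\frac{c_1}{n} \;=\; (m-1)\frac{c_1}{n},
\]
which rearranges to $m \leq n(b-a)/c_1 + 1$.

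For the lower bound, I would split
\[
b - a \;=\; (x_{i_1} - a) \,+\, (x_{i_m} - x_{i_1}) \,+\, (b - x_{i_m})
\]
and estimate each piece by $c_2/n$, $(m-1)\,c_2/n$, and $c_2/n$ respectively, giving $b-a \leq (m+1)c_2/n$ and hence $m \geq n(b-a)/c_2 - 1$. The middle term uses the upper spacing telescoped over $i_m - i_1 = m-1$ consecutive gaps. The boundary terms use the following observation: if $i_1 > 1$, then $x_{i_1-1}$ is a design point not contained in $[a,b]$ and it lies to the left of $x_{i_1}$, so $x_{i_1-1} < a$, whence $x_{i_1} - a < x_{i_1} - x_{i_1-1} \leq c_2/n$; a symmetric argument handles $b - x_{i_m}$ when $i_m < n$. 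Finally, the case $m=0$ is trivial: no $x_i$ in $[a,b]$ means $[a,b]$ is sandwiched between two consecutive design points (or between the boundary of $[0,1]$ and the nearest $x_i$), so $b-a \leq c_2/n$ and the lower bound $n(b-a)/c_2 - 1 \leq 0 = m$ holds.

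The only mildly delicate point is the boundary behaviour when $i_1 = 1$ or $i_m = n$, where the ``missing'' neighbour $x_{i_1-1}$ or $x_{i_m+1}$ is not available. In the intended applications (within Lemma~\ref{buj}), the interval $[a,b]$ lies in a region where design points abut on both sides, so the argument above applies verbatim; otherwise one replaces the missing neighbour by the endpoint $0$ or $1$ of the overall interval and loses at most a harmless additive constant, which is absorbed into the $-1$ on the right-hand side of~\eqref{subm.eq}.
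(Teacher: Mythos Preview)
Your argument is correct and follows the same route as the paper: both identify the consecutive block $x_{k+1},\dots,x_{k+m}$ of design points in $[a,b]$, telescope the lower spacing bound $x_i-x_{i-1}\ge c_1/n$ over the $m-1$ internal gaps to obtain the upper inequality, and reach one step beyond the block on each side to obtain the lower inequality. The only presentational difference is that the paper absorbs your boundary discussion into a single line by introducing phantom points $x_0:=\max(x_1-c_2/n,0)$ and $x_{n+1}:=\min(x_n+c_2/n,1)$, so that the telescoped bound $b-a\le x_{k+m+1}-x_k\le (m+1)c_2/n$ is written uniformly for all $0\le k\le n-m$; this replaces your final paragraph but encodes exactly the same idea.
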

\begin{proof}
Let $x_0 := \max \left( x_1 - c_2/n, 0 \right)$ and $x_{n+1} :=
  \min \left(x_n + c_2/n, 1 \right)$. Let  
  \begin{equation*}
    \left\{x_1, \dots, x_n \right\} \cap [a, b] = \left\{x_{k+1},
      \dots, x_{k+m} \right\}
  \end{equation*}
  for some $0 \leq k \leq n-m$. Clearly
  \begin{equation*}
    b - a \geq x_{k+m} - x_{k+1} = \sum_{i=k+2}^{k+m} \left(x_i -
      x_{i-1} \right) \geq \frac{c_1 (m-1)}{n}
  \end{equation*}
  which gives the upper bound in~\eqref{subm.eq}. On the other hand, 
  \begin{equation*}
    b-a \leq x_{k+m+1} - x_k = \sum_{i=k+1}^{k+m+1} \left(x_{i} -
      x_{i-1} \right) \leq \frac{c_2(m+1)}{n}
  \end{equation*}
  which gives the lower bound in~\eqref{subm.eq}. The proof is
  complete. 
\end{proof}

\begin{lemma}\label{1db}
  Let $\phi$ be a convex function on $[0, 1]$ for which $\int_0^1
  |\phi(x)|^p dx \leq 1$ for a fixed $p \geq 1$. Then $|\phi(y)| \leq
  2(1+p)^{1/p} \max \left(y^{-1/p}, (1-y)^{-1/p} \right)$ for all $y
  \in (0, 1)$. 
\end{lemma}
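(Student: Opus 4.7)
The plan is to case-split according to the sign of $\phi(y)$ and, in each case, exhibit a subinterval of $[0,1]$ on which $|\phi|$ dominates a ramp of height $|\phi(y)|$; the $L^p$-constraint then bounds $|\phi(y)|$.

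The easy case is $\phi(y) \geq 0$. Writing $M = \phi(y)$, convexity gives $\phi'_-(y) \leq \phi'_+(y)$, so at least one of $\phi'_+(y) \geq 0$ or $\phi'_-(y) \leq 0$ holds. In the former, $\phi$ is non-decreasing on $[y,1]$, so $\phi \geq M$ there and $M^p(1-y) \leq 1$; in the latter, analogously $M^p y \leq 1$. Either way $M \leq \max(y^{-1/p}, (1-y)^{-1/p})$, well within the claimed bound.

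The bulk of the work is when $\phi(y) = -M < 0$; by the obvious symmetry $x \mapsto 1-x$ it suffices to prove $M \leq 2(p+1)^{1/p}y^{-1/p}$. My plan is to locate $x_1 := \inf\{x \in [0,y] : \phi(x) \leq 0\}$, which is well-defined since $\phi(y) = -M < 0$. Because a convex function has at most two sign changes on an interval, $\phi \geq 0$ on $[0, x_1]$ and $\phi \leq 0$ on $[x_1, y]$. On $[x_1, y]$, the chord-above-function property applied to the chord joining $(x_1, \phi(x_1))$ and $(y, -M)$ (both endpoints $\leq 0$) yields $|\phi(x)| \geq M(x - x_1)/(y - x_1)$. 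On $[0, x_1]$ (nontrivial only if $x_1 > 0$, in which case $\phi(x_1) = 0$ by continuity), applying the slope-monotonicity inequality for convex functions to the triple $x < x_1 < y$ gives $-\phi(x)/(x_1 - x) \leq -M/(y - x_1)$, i.e., $\phi(x) \geq M(x_1 - x)/(y - x_1)$. Together, $|\phi(x)| \geq M|x - x_1|/(y - x_1)$ on all of $[0, y]$.

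Integrating this ramp and setting $r := x_1/(y-x_1) \geq 0$,
\begin{equation*}
\int_0^y |\phi(x)|^p\, dx \;\geq\; \frac{M^p}{(y-x_1)^p(p+1)}\bigl[x_1^{p+1} + (y-x_1)^{p+1}\bigr] \;=\; \frac{M^p y}{p+1} \cdot \frac{r^{p+1}+1}{r+1}.
\end{equation*}
The elementary inequality $(r^{p+1}+1)/(r+1) \geq 1/2$ holds for $r \geq 0$ and $p \geq 1$: for $r \leq 1$ use $r^{p+1} \geq 0$ together with $1/(r+1) \geq 1/2$; for $r \geq 1$ use $r^{p+1} \geq r$ to get the ratio at least $1$. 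Combined with $\int_0^1 |\phi|^p \leq 1$ this gives $M^p \leq 2(p+1)/y$, hence $M \leq (2(p+1)/y)^{1/p} \leq 2(p+1)^{1/p} y^{-1/p}$, as required.

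The main obstacle will be the construction in Case 2, where convexity gives only lower bounds on $\phi$ and I need a lower bound on $|\phi|$. The trick of locating the sign-change point $x_1$ and combining two different convexity inequalities (chord above the function on $[x_1,y]$, slope-monotonicity on $[0, x_1]$) into a single ramp bound $|\phi(x)| \geq M|x-x_1|/(y-x_1)$ is the key step; afterward the integration and the final elementary estimate are routine.
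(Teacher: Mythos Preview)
Your proof is correct and follows essentially the same approach as the paper. Both arguments handle $\phi(y)\geq 0$ by noting that $\phi\geq\phi(y)$ on one side of $y$, and handle $\phi(y)<0$ by locating the sign-change point (your $x_1$, the paper's $z$) and building the ramp lower bound $|\phi(x)|\geq M|x-x_1|/(y-x_1)$ on $[0,y]$ from two convexity inequalities, then integrating. Your packaging is slightly more streamlined: you define $x_1$ once and treat $x_1=0$ and $x_1>0$ uniformly, whereas the paper case-splits on the sign of $\phi(0)$; and your elementary bound $(r^{p+1}+1)/(r+1)\geq 1/2$ is a marginally sharper variant of the paper's $z^{p+1}+(y-z)^{p+1}\geq 2^{-p}y^{p+1}$, yielding $M\leq 2^{1/p}(p+1)^{1/p}y^{-1/p}$ before you relax to the stated constant. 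One small remark: your symmetry comment is actually unnecessary, since your argument on $[0,y]$ already gives $M\leq 2(p+1)^{1/p}y^{-1/p}$ for every $y\in(0,1)$, which is at most $2(p+1)^{1/p}\max(y^{-1/p},(1-y)^{-1/p})$ trivially.
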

\begin{proof}
It suffices to prove the theorem for $0 < y < 1/2$. 

Suppose $\phi(y) > y^{-1/p}$. Then, by convexity of $\phi$, the
condition $\phi(x) > \phi(y)$ must hold either for all $x \in (0, y)$
or for all $x \in (y, 1)$. Therefore,  
\begin{equation*}
  1 \geq \int |\phi(x)|^p dx \geq \phi(y)^p \min(y, 1-y) \geq
  \phi(y)^p y
\end{equation*}
which gives a contradiction. Therefore $\phi(y) \leq y^{-1/p}$. 

Suppose, if possible, that $\phi(y) < -c y^{-1/p}$ for some $c >
1$. We consider the following cases separately.

Case ($i$): Assume $\phi(0) < -cy^{-1/p}$ . In this case,
  by convexity of $\phi$, it follows that $\phi(x) < -cy^{-1/p}$ for
  all $x \in [0, y]$. Therefore $|\phi(x)| > cy^{-1/p}$ and thus
  \begin{equation*}
   1 \geq \int_0^1 |\phi(x)|^p dx \geq \int_0^y \frac{c^p}{y} dx =
   c^p. 
  \end{equation*}
  This contradicts $c > 1$. 

Case ($ii$): Here $\phi(0) \geq -cy^{-1/p}$. We now consider the following
  two subcases: 
  \begin{enumerate}
  \item $\phi(0) \leq 0$. Then $\phi(x) \leq 0$ for all $x \in [0,
    y]$. For each $0 \leq x \leq y$, we have, by convexity, 
    \begin{equation*}
      \phi(x) \leq \left(1-\frac{x}{y} \right) \phi(0) + \frac{x}{y}
      \phi(y) \leq \frac{x}{y} \phi(y). 
    \end{equation*}
  Thus $y \phi(x) \leq x \phi(y) \leq 0$ for each $0 \leq x \leq
  y$. As a result, 
  \begin{equation*}
    y^p |\phi(x)|^p \geq x^p |\phi(y)|^p \qt{for $0 \leq x \leq y$}. 
  \end{equation*}
Integrating both sides from $x = 0$ to $x = y$, we obtain
  \begin{equation*}
    y^p \int_0^y |\phi(x)|^p dx \geq |\phi(y)|^p \frac{y^{p+1}}{p+1}
  \end{equation*}
  which implies that $|\phi(y)|^{p} \leq (p+1)/y$, i.e., $|\phi(y)| \leq
  (1+p)^{1/p} y^{-1/p}$ which is a contradiction if $c >
  (1+p)^{1/p}$. 
\item $\phi(0) > 0$. Let $z \in (0, y)$ be such that $\phi(z) =
  0$. For $x < z$, we can write, by convexity, 
  \begin{equation*}
    0 = \phi(z) \leq \frac{y-z}{y-x}\phi(x) + \frac{z-x}{y-x}
    \phi(y)
  \end{equation*}
which implies that 
\begin{equation*}
0 > \phi(y) \geq \frac{y-z}{x-z} \phi(x).
\end{equation*}
As a result, $|z-x|^{p} |\phi(y)|^p \leq |y-z|^p |\phi(x)|^p$ for $0 <
x < z$. Integrating both sides from $x = 0$ to $x = z$, we get 
\begin{equation}\label{a1}
  |\phi(y)|^p \frac{z^{p+1}}{p+1} \leq |y-z|^p \int_0^z |\phi(x)|^p
  dx. 
\end{equation}
For $z < x < y$, again, by convexity, we write
\begin{equation*}
  \phi(x) \leq \frac{x-z}{y-z} \phi(y) + \frac{y-x}{y-z} \phi(z) =
  \frac{x-z}{y-z} \phi(y) \leq 0. 
\end{equation*}
As a result, $|y-z|^p |\phi(x)|^p \geq |x-z|^p
|\phi(y)|^p$. Integrating from $x = z$ to $x = y$, we get
\begin{equation}\label{a2}
  |\phi(y)|^p \frac{(y-z)^{p+1}}{p+1} \leq |y-z|^p \int_z^y
  |\phi(x)|^p dx. 
\end{equation}
Adding the two inequalities~\eqref{a1} and~\eqref{a2}, we obtain
\begin{equation*}
 \frac{|\phi(y)|^p}{p+1} \left(z^{p+1} + (y-z)^{p+1} \right) \leq
 |y-z|^p \int_0^y |\phi(x)|^p dx <  y^p . 
\end{equation*}
Now 
\begin{equation*}
  z^{p+1} + (y-z)^{p+1} \geq \min_{0 < u < y}  \left(u^{p+1} +
    (y-u)^{p+1} \right) = 2^{-p} y^{p+1}. 
\end{equation*}
Combining, we obtain
\begin{equation*}
  |\phi(y)| < 2 (1+p)^{1/p} y^{-1/p}
\end{equation*}
which results in a contradiction if $c \geq 2(1+p)^{1/p} y^{-1/p}$. 
  \end{enumerate}
\end{proof}

\begin{lemma}[Interpolation Lemma]\label{inter}
Fix $x_1 < x_2 < \dots < x_n$ and suppose that
$c_1 \leq n(x_i - x_{i-1}) \leq c_2$ for all $2 \leq i \leq n$. For
every function $f$ on $[x_1, x_n]$, associate another function 
$\tilde{f}$ on $[x_1, x_n]$ by  
\begin{equation*}
  \tilde{f}(x) := \frac{x_{i+1} - x}{x_{i+1} - x_i}f(x_i)  +
  \frac{x-x_i}{x_{i+1} - x_i} f(x_{i+1}) \qt{for $x_i \leq x \leq x_{i+1}$}
\end{equation*}
where $i = 1, \dots, n-1$. Then for every pair of functions $f$ and
$g$ on $[x_1, x_n]$, we have  
\begin{equation*}
\frac{1}{c_2} \int_{x_1}^{x_n} \left(\tilde{f}(x) - \tilde{g}(x)
\right)^2 dx \leq \frac{1}{n} 
\sum_{i=1}^n \left(f(x_i) - g(x_i) \right)^2  \leq \frac{6}{c_1}
\int_{x_1}^{x_n} \left(\tilde{f}(x) - \tilde{g}(x) 
\right)^2 
dx. 
\end{equation*}  
\end{lemma}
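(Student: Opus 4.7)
The plan is to reduce to the single function $h := f - g$ (whose interpolant is $\tilde h = \tilde f - \tilde g$ by linearity of the interpolation operator) and then compute $\int \tilde h^2$ interval by interval in closed form. On each $[x_i, x_{i+1}]$ of length $\delta_i := x_{i+1} - x_i$, the function $\tilde h$ is the linear interpolant between the values $h(x_i)$ and $h(x_{i+1})$, so a direct antiderivative calculation gives
\begin{equation*}
\int_{x_i}^{x_{i+1}} \tilde h(x)^2 \, dx \;=\; \frac{\delta_i}{3}\bigl(h(x_i)^2 + h(x_i) h(x_{i+1}) + h(x_{i+1})^2\bigr).
\end{equation*}

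Next I would sandwich the quadratic form $a^2 + ab + b^2$ using the elementary inequalities $-\tfrac12(a^2+b^2) \le ab \le \tfrac12(a^2+b^2)$, yielding
\begin{equation*}
\frac{\delta_i}{6}\bigl(h(x_i)^2 + h(x_{i+1})^2\bigr) \;\le\; \int_{x_i}^{x_{i+1}} \tilde h^2 \, dx \;\le\; \frac{\delta_i}{2}\bigl(h(x_i)^2 + h(x_{i+1})^2\bigr).
\end{equation*}
Summing over $i = 1, \dots, n-1$ rearranges each $h(x_i)^2$ with a total weight. The design assumption $c_1/n \le \delta_i \le c_2/n$ then controls these weights: for the upper bound each interior $h(x_i)^2$ appears with weight $(\delta_{i-1}+\delta_i)/2 \le c_2/n$ and the endpoints with weight $\delta_1/2$ or $\delta_{n-1}/2 \le c_2/(2n)$, so $\int_{x_1}^{x_n} \tilde h^2 \, dx \le (c_2/n) \sum_i h(x_i)^2$, which is the left inequality of the lemma after rearrangement.

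For the right inequality I would apply the lower bound. Each interior $h(x_i)^2$ appears with weight at least $(\delta_{i-1}+\delta_i)/6 \ge c_1/(3n)$, but the two endpoints $h(x_1)^2$ and $h(x_n)^2$ appear with the weaker weight $\delta_1/6$ or $\delta_{n-1}/6$, which is only $\ge c_1/(6n)$. Taking the worst (endpoint) lower bound uniformly gives
\begin{equation*}
\int_{x_1}^{x_n} \tilde h^2 \, dx \;\ge\; \frac{c_1}{6n} \sum_{i=1}^n h(x_i)^2,
\end{equation*}
which is the claimed right inequality. There is no real obstacle; the only mild subtlety is recognizing that the endpoint terms are responsible for the factor $6$ (rather than $3$) in the lower bound, since they only contribute to one neighboring subinterval instead of two.
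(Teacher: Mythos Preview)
Your proposal is correct and follows essentially the same approach as the paper: compute the integral of the squared interpolant on each subinterval as $\tfrac{\delta_i}{3}(a^2+ab+b^2)$, sandwich using $|ab|\le\tfrac12(a^2+b^2)$, and sum. The paper applies the design bound $c_1/n\le\delta_i\le c_2/n$ per interval before summing rather than tracking weights as you do, but the argument and constants are identical.
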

\begin{proof}
 It is elementary to check that for every $1 \leq i \leq n-1$, we have
\begin{equation*}
  \int_{x_i}^{x_{i+1}} \left(\tilde{f}(x) - \tilde{g}(x) \right)^2 dx
  = \frac{x_{i+1} - x_i}{3} \left(\alpha^2 + \beta^2 + \alpha \beta \right) 
\end{equation*}
where $\alpha := f(x_i) - g(x_i)$ and $\beta = f(x_{i+1}) -
g(x_{i+1})$. Using the inequalities 
\begin{equation*}
  \frac{-\alpha^2 - \beta^2}{2} \leq \alpha \beta \leq \frac{\alpha^2
    + \beta^2}{2},
\end{equation*}
we obtain 
\begin{eqnarray*}
\frac{c_1(\alpha^2 + \beta^2)}{6n} \leq \left(x_{i+1} - x_i \right)
\frac{\alpha^2 + \beta^2}{6} & \leq & \int_{x_i}^{x_{i+1}} \left(\tilde{f}(x) - \tilde{g}(x) \right)^2 dx \\
& \leq & \left(x_{i+1} - x_i \right) \frac{\alpha^2 + \beta^2}{2} \leq
\frac{c_2(\alpha^2 + \beta^2)}{2n}. 
\end{eqnarray*}
Adding these inequalities from $i = 1$ to $i = n-1$, we deduce 
\begin{equation*}
\frac{c_1}{6n} \sum_{i=1}^n \left(f(x_i) - g(x_i) \right)^2 \leq 
\int_{x_1}^{x_n} \left(\tilde{f}(x) - \tilde{g}(x) \right)^2 dx \leq 
  \frac{c_2}{n} \sum_{i=1}^n \left(f(x_i) - g(x_i) \right)^2
\end{equation*}
which yields the desired result. 
\end{proof}
\begin{remark}
Observe that if $f$ is a convex function on $[a, b]$, then $\tilde{f}$
is also convex on $[a, b]$. 
\end{remark}

\begin{lemma}\label{concov}
The set of all convex projections of a concave function $f_0$ includes an affine function.  
\end{lemma}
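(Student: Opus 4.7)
The plan is to show that $\tau^* := \Pi_L(f_0)$, the $L^2$-best affine approximation of $f_0$ at the design points, is itself a convex projection. Here $L \subset \R^n$ denotes the two-dimensional subspace of ``affine sequences'' $(a + bx_i)_{i=1}^n$. Letting $v := (f_0(x_i))_{i=1}^n$ and $A \subset \R^n$ the closed convex cone of ``convex sequences'' $(\phi(x_i))_{i=1}^n$ with $\phi \in \C$, the convex projection problem is precisely the projection of $v$ onto $A$. Since $L$ is the lineality space of $A$ (a sequence is both convex and concave iff it is affine), $A$ decomposes as $A = L + C$ with $C := A \cap L^\perp$ a pointed convex cone, and this decomposition is orthogonal. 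Writing any $a \in A$ as $a_L + a_C$ with $a_L \in L$, $a_C \in C$, and $v = \tau^* + r$ with $r := v - \tau^* \in L^\perp$ (so that $\sum_i r_i = \sum_i r_i x_i = 0$ by the normal equations), orthogonality gives $\|v - a\|^2 = \|\tau^* - a_L\|^2 + \|r - a_C\|^2$. Hence the projection of $v$ onto $A$ is affine, equal to $\tau^*$, if and only if $\Pi_C(r) = 0$.

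Now $f_0$ is concave and $\tau^*$ is affine, so $r$ is a concave sequence; combined with $r \in L^\perp$ this yields $-r \in C$. The condition $\Pi_C(r) = 0$ is equivalent to $\langle r, c \rangle \le 0$ for every $c \in C$, i.e.\ $\langle -r, c\rangle \ge 0$ for every $c \in C$. Since $-r \in C$, it therefore suffices to prove the self-duality statement $\langle c_1, c_2\rangle \ge 0$ for all $c_1, c_2 \in C$. The cone $C$ is generated (as a convex cone) by the centered hinges $\tilde\psi_k := \psi_k - \Pi_L \psi_k$ for $k = 2, \dots, n-1$, where $\psi_k(x) := (x - x_k)_+$; the nonnegative coefficients in the decomposition of any $c \in C$ are precisely the second divided differences of $c$, which are nonneg by convexity. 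It is therefore enough to verify $\langle \tilde\psi_j, \tilde\psi_k \rangle \ge 0$ for all interior indices $j, k$.

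I would establish this inner-product inequality through a Green's-function identity that is transparent in the continuous analog and transfers to the discrete setting via iterated Abel summation. Set $\mathcal{F}_j(x) := \int_0^x (x-s)\,\tilde\psi_j(s)\,ds$. The zero-moment conditions on $\tilde\psi_j$ (namely $\int \tilde\psi_j = \int s\,\tilde\psi_j = 0$) furnish the clamped boundary conditions $\mathcal{F}_j(0) = \mathcal{F}_j'(0) = \mathcal{F}_j(1) = \mathcal{F}_j'(1) = 0$. Two integrations by parts, whose boundary terms vanish by these clamped conditions, together with the distributional identity $\tilde\psi_k'' = \delta_{x_k}$, yield
\[
\langle \tilde\psi_j, \tilde\psi_k \rangle \;=\; \int_0^1 \mathcal{F}_j(x)\,\tilde\psi_k''(x)\,dx \;=\; \mathcal{F}_j(x_k).
\]
Since $\tilde\psi_j$ is convex, $\mathcal{F}_j^{(4)} = \tilde\psi_j'' \ge 0$ as a measure; combined with the clamped boundary conditions, the classical nonnegativity of the clamped biharmonic Green's function gives $\mathcal{F}_j \ge 0$ on $[0,1]$. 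Hence $\langle \tilde\psi_j, \tilde\psi_k\rangle = \mathcal{F}_j(x_k) \ge 0$, the self-duality claim follows, and the lemma is proved.

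The main obstacle is executing the last step in the discrete setting, i.e., justifying nonnegativity of the corresponding discrete clamped-biharmonic Green's function. While morally equivalent to its continuous counterpart, a rigorous discrete proof requires iterating Abel summation twice with careful tracking of boundary contributions, whose cancellation is driven precisely by the two moment conditions $\sum_i \tilde\psi_j(x_i) = \sum_i x_i\, \tilde\psi_j(x_i) = 0$. This sign-tracking bookkeeping is the technical heart of the argument, but no genuinely new idea beyond the continuous template is needed.
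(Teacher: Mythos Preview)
Your route is genuinely different from the paper's. The paper argues by an elementary sandwich: take any convex projection $\phi_0$, and in four cases (according to the signs of $f_0(0+)-\phi_0(0)$ and $f_0(1-)-\phi_0(1)$) exhibit an affine function lying pointwise between $\phi_0$ and $f_0$ on the design points, which therefore has no larger $\ell^2$-loss. No duality, no Green's functions. Your approach instead identifies the projection explicitly as $\Pi_L(f_0)$ by proving the cone $C=A\cap L^\perp$ is acute; this is a sharper conclusion and a more structural proof, and your reduction to $\langle c_1,c_2\rangle\ge 0$ for $c_1,c_2\in C$ is correct. In fact acuteness of $C$ is \emph{equivalent} to the lemma (uniqueness of the projection at the design points forces $\Pi_A(v)=\Pi_L(v)$ once the projection is known to be affine), so you are not proving more, just proving it differently.

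The gap you flag is, however, larger than ``bookkeeping''. Your continuous calculation relies on $\mathcal F_j(1)=\mathcal F_j'(1)=0$, which come from the \emph{continuous} moment conditions $\int_0^1\tilde\psi_j=\int_0^1 s\,\tilde\psi_j=0$; but your $\tilde\psi_j$ are centered with respect to the \emph{discrete} inner product, so these integrals do not vanish and the continuous template does not apply as stated. In the genuinely discrete version one can set $F_j(x_k):=\sum_{i\le k}\tilde\psi_j(x_i)(x_k-x_i)$ and check $F_j(x_1)=F_j(x_n)=0$, $\langle\tilde\psi_j,\tilde\psi_k\rangle=F_j(x_k)$, and that the partial sums $S_j(k)=\sum_{i\le k}\tilde\psi_j(x_i)$ satisfy $S_j(0)=S_j(n)=0$; but the ``clamped'' derivative conditions now live on ghost intervals, and showing $F_j\ge 0$ on a non-uniform grid from convexity of $\tilde\psi_j$ requires a real sign-pattern argument (e.g.\ exploiting that $\tilde\psi_j$ is U-shaped with two sign changes), not a mechanical Abel summation. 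It can be done, but it is a page of honest work, whereas the paper's sandwich argument is a few lines.
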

\begin{proof}
We prove this result by the method of contradiction. Suppose that there is no convex projection that is affine. Let $\phi_0$ be the continuous piecewise affine convex projection of $f_0$. For a function $g:[0,1] \rightarrow \R$ we define $g(0+) := \lim_{x \rightarrow 0 +} g(x)$ and $g(1-) := \lim_{x \rightarrow 1 -} g(x)$. This notation is necessary as $f_0$ need not be continuous at the boundary points $\{0,1\}$. 
	
Case ($i$): Suppose that $f_0(0+) \ge \phi_0(0)$ and $f_0(1-) \ge \phi_0(1)$. Then the affine function $\tilde {\phi}_0$ obtained by joining $(0,\phi_0(0))$ and $(1,\phi_0(1))$, i.e., $\tilde {\phi}_0(x) = (1-x)\phi_0(0) + x \phi_0(1)$, for $x \in [0,1]$, lies in-between $\phi_0$ and $f_0$ (as $f_0$ is concave) and $\ell^2(\phi_0,f_0) \ge \ell^2(\tilde{\phi}_0,f_0)$, giving rise to a contradiction. 
	
	Case ($ii$): Suppose that $f_0(0+) < \phi_0(0)$ and $f_0(1-) \ge \phi_0(1)$. Then there is a point $u \in (0,1)$ such that $f_0(u) = \phi_0(u)$. Let us define $\tilde \phi$ to be the affine function joining $(u,\phi_0(u))$ and $(1,\phi_0(1))$. Again, $\tilde{\phi}_0$ lies in-between $\phi_0$ and $f_0$ and $\ell^2(\phi_0,f_0) \ge \ell^2(\tilde{\phi}_0,f_0)$, thus giving rise to a contradiction. 
	
	Case ($iii$): Suppose that $f_0(0+) \ge \phi_0(0)$ and $f_0(1-) < \phi_0(1)$. A similar analysis as in ($ii$) by looking at the affine function obtained by joining $(0,\phi_0(0))$ and $(v,\phi_0(v))$ where $\phi_0(v) = f_0(v)$, $v \in (0,1)$, gives a contradiction.
	
		Case ($iv$): Suppose that $f_0(0+) < \phi_0(0)$ and $f_0(1-) <\phi_0(1)$. Suppose that there are two points $u_0, u_1 \in (0,1)$ such that $f_0(u_i) = \phi_0(u_i)$, for $i = 1,2$. Then define $\tilde \phi$ to be the affine function joining $(u_0, \phi_0(u_0))$ and $(u_1, \phi_0(u_1))$. Again, $\tilde{\phi}_0$ lies in-between $\phi_0$ and $f_0$ and $\ell^2(\phi_0,f_0) \ge \ell^2(\tilde{\phi}_0,f_0)$, thus giving rise to a contradiction. Suppose that $f_0$ and $\phi_0$ touch at just one point $v \in (0,1)$. Then defining $\tilde{\phi}_0$ to be the affine function that passes through $(v,\phi_0(v))$ and is a sub-gradient to both $\phi_0$ and $f_0$ at $v$ yields a contradiction. If $f_0$ and $\phi_0$ do not touch at all then defining $\tilde{\phi}_0$ to be any affine function lying between $\phi_0$ and $f_0$ shows that $\ell^2(\phi_0,f_0) \ge \ell^2(\tilde{\phi}_0,f_0)$.  This completes the proof.
\end{proof}
\begin{remark}
Note that if $n > 2$, the convex projection of a concave $f_0$ is in fact unique on $(0,1)$ and affine.
\end{remark}


%

\bibliographystyle{apalike}
\bibliography{AG}

\def\noopsort#1{}
\begin{thebibliography}{}

\bibitem[Atkinson, 1989]{Atkinson88}
Atkinson, K.~E. (1989).
\newblock {\em An introduction to numerical analysis}.
\newblock John Wiley \& Sons Inc., New York, second edition.

\bibitem[Barron et~al., 1999]{BarronBirgeMassart}
Barron, A., Birg{\'e}, L., and Massart, P. (1999).
\newblock Risk bounds for model selection via penalisation.
\newblock {\em Probability Theory and Related Fields}, 113:301--413.

\bibitem[Birg{\'e}, 1989]{Birge89}
Birg{\'e}, L. (1989).
\newblock The {G}renander estimator: a nonasymptotic approach.
\newblock {\em Ann. Statist.}, 17(4):1532--1549.

\bibitem[Birg\'e and Massart, 1993]{BM93}
Birg\'e, L. and Massart, P. (1993).
\newblock Rates of convergence for minimum contrast estimators.
\newblock {\em Probability Theory and Related Fields}, 97:113--150.

\bibitem[Bronshtein, 1976]{Bronshtein76}
Bronshtein, E.~M. (1976).
\newblock $\epsilon$-entropy of convex sets and functions.
\newblock {\em Siberian Mathematical Journal}, 17:393--398.

\bibitem[Cai and Low, 2014]{CaiLowFwork}
Cai, T. and Low, M. (2014).
\newblock A framework for estimation of convex functions.
\newblock {\em Statist. Sinica (to appear)}.
\newblock Available at
  http://www3.stat.sinica.edu.tw/ss\_newpaper/SS-13-279\_na.pdf.

\bibitem[Carolan and Dykstra, 1999]{CD99}
Carolan, C. and Dykstra, R. (1999).
\newblock Asymptotic behavior of the {G}renander estimator at density flat
  regions.
\newblock {\em Canad. J. Statist.}, 27(3):557--566.

\bibitem[Cator, 2011]{Cator11}
Cator, E. (2011).
\newblock Adaptivity and optimality of the monotone least-squares estimator.
\newblock {\em Bernoulli}, 17(2):714--735.

\bibitem[Chatterjee et~al., 2013]{CGS13}
Chatterjee, S., Guntuboyina, A., and Sen, B. (2013).
\newblock Improved risk bounds in isotonic regression.
\newblock available at http://http://arxiv.org/abs/1311.3765.

\bibitem[Cule et~al., 2010]{CSS10}
Cule, M.~L., Samworth, R.~J., and Stewart, M.~I. (2010).
\newblock Maximum likelihood estimation of a multi-dimensional log-concave
  density (with discussion).
\newblock {\em Journal of the Royal Statistical Society, Series B},
  72:545--600.

\bibitem[Dryanov, 2009]{Dryanov}
Dryanov, D. (2009).
\newblock Kolmogorov entropy for classes of convex functions.
\newblock {\em Constructive Approximation}, 30:137--153.

\bibitem[D{\"u}mbgen et~al., 2004]{DuembgenEtAl04}
D{\"u}mbgen, L., Freitag, S., and Jongbloed, G. (2004).
\newblock Consistency of concave regression with an application to
  current-status data.
\newblock {\em Math. Methods Statist.}, 13(1):69--81.

\bibitem[Dykstra, 1983]{Dykstra83}
Dykstra, R.~L. (1983).
\newblock An algorithm for restricted least squares regression.
\newblock {\em J. Amer. Statist. Assoc.}, 78(384):837--842.

\bibitem[Fraser and Massam, 1989]{FraserM89}
Fraser, D. A.~S. and Massam, H. (1989).
\newblock A mixed primal-dual bases algorithm for regression under inequality
  constraints. {A}pplication to concave regression.
\newblock {\em Scand. J. Statist.}, 16(1):65--74.

\bibitem[Grenander, 1956]{G56}
Grenander, U. (1956).
\newblock On the theory of mortality measurement. {II}.
\newblock {\em Skand. Aktuarietidskr.}, 39:125--153 (1957).

\bibitem[Groeneboom et~al., 2001]{GroeneboomEtAl01}
Groeneboom, P., Jongbloed, G., and Wellner, J.~A. (2001).
\newblock Estimation of a convex function: characterizations and asymptotic
  theory.
\newblock {\em Ann. Statist.}, 29(6):1653--1698.

\bibitem[Groeneboom and Pyke, 1983]{GP83}
Groeneboom, P. and Pyke, R. (1983).
\newblock Asymptotic normality of statistics based on the convex minorants of
  empirical distribution functions.
\newblock {\em Ann. Probab.}, 11(2):328--345.

\bibitem[Guntuboyina and Sen, 2013]{GS13}
Guntuboyina, A. and Sen, B. (2013).
\newblock Covering numbers for convex functions.
\newblock {\em IEEE Trans. Inf. Th.}, 59(4):1957--1965.

\bibitem[Hanson and Pledger, 1976]{HanPled76}
Hanson, D.~L. and Pledger, G. (1976).
\newblock Consistency in concave regression.
\newblock {\em Ann. Statist.}, 4(6):1038--1050.

\bibitem[Hildreth, 1954]{Hildreth54}
Hildreth, C. (1954).
\newblock Point estimates of ordinates of concave functions.
\newblock {\em J. Amer. Statist. Assoc.}, 49:598--619.

\bibitem[Jankowski, 2014]{HW12}
Jankowski, H. (2014).
\newblock Convergence of linear functionals of the {G}renander estimator under
  misspecification.
\newblock {\em Ann. Statist.}, 42(2):625--653.

\bibitem[Mammen, 1991]{Mammen91}
Mammen, E. (1991).
\newblock Nonparametric regression under qualitative smoothness assumptions.
\newblock {\em Ann. Statist.}, 19(2):741--759.

\bibitem[Massart, 2007]{Massart03Flour}
Massart, P. (2007).
\newblock {\em Concentration inequalities and model selection. Lecture notes in
  Mathematics}, volume 1896.
\newblock Springer, Berlin.

\bibitem[Pollard, 1990]{Pollard90Iowa}
Pollard, D. (1990).
\newblock {\em Empirical Processes: Theory and Applications}, volume~2 of {\em
  NSF-CBMS Regional Conference Series in Probability and Statistics}.
\newblock Institute of Mathematical Statistics, Hayward, CA.

\bibitem[Rigollet and Tsybakov, 2012]{RT12}
Rigollet, P. and Tsybakov, A.~B. (2012).
\newblock Sparse estimation by exponential weighting.
\newblock {\em Statist. Sci.}, 27(4):558--575.

\bibitem[Seijo and Sen, 2011]{SS11}
Seijo, E. and Sen, B. (2011).
\newblock Nonparametric least squares estimation of a multivariate convex
  regression function.
\newblock {\em Annals of Statistics}, 39:1633--1657.

\bibitem[Seregin and Wellner, 2010]{SW10}
Seregin, A. and Wellner, J.~A. (2010).
\newblock Nonparametric estimation of multivariate convex-transformed
  densities.
\newblock {\em Annals of Statistics}, 38:3751--3781.

\bibitem[Stark and Yang, 1988]{StarkYang}
Stark, H. and Yang, Y. (1988).
\newblock {\em Vector space projections}.
\newblock Wiley, New York.

\bibitem[Van~de Geer, 1993]{vdG93}
Van~de Geer, S. (1993).
\newblock Hellinger-consistency of certain nonparametric maximum likelihood
  estimators.
\newblock {\em Ann. Statist.}, 21(1):14--44.

\bibitem[Van~de Geer, 2000]{VandegeerBook}
Van~de Geer, S. (2000).
\newblock {\em Applications of Empirical Process Theory}.
\newblock Cambridge University Press.

\bibitem[Van~der Vaart, 1998]{vaart98book}
Van~der Vaart, A. (1998).
\newblock {\em Asymptotic Statistics}.
\newblock Cambridge University Press.

\bibitem[van~der Vaart and Wellner, 1996]{vaartwellner96book}
van~der Vaart, A.~W. and Wellner, J.~A. (1996).
\newblock {\em Weak Convergence and Empirical Process: With Applications to
  Statistics}.
\newblock Springer-Verlag.

\bibitem[Zhang, 2002]{Zhang02}
Zhang, C.-H. (2002).
\newblock Risk bounds in isotonic regression.
\newblock {\em Ann. Statist.}, 30(2):528--555.

\end{thebibliography}
\end{document}